 \newtheorem{theorem}{Theorem}[section]
 \newtheorem{lemma}[theorem]{Lemma}
\newtheorem{assumption}[theorem]{Assumption}
\newtheorem{example}[theorem]{Example}
 \newtheorem{definition}[theorem]{Defintion}
\DeclareMathOperator*{\argmin}{arg\,min}
\newcommand{\R}{{\mathbb R}}
\newcommand{\e}{\varepsilon}
\newcommand{\la}{\langle}
\newcommand{\ra}{\rangle}
\title{Hyperfast Second-Order Local Solvers for Efficient
Statistically Preconditioned Distributed Optimization}
\author[1]{Pavel Dvurechensky} 
\author[2,8]{Dmitry Kamzolov} 
\author[3]{Aleksandr Lukashevich} 
\author[4]{Soomin Lee} 
\author[4]{Erik Ordentlich} 
\author[5]{C\'esar A. Uribe} 
\author[2,6,7]{Alexander Gasnikov} 
\affil[1]{Weierstrass Institute for Applied Analysis and Stochastics, Berlin, Germany}
\affil[2]{Moscow Institute of Physics and Technology, Dolgoprudny, Russia}
\affil[3]{Center for Energy Science and Technology, Skolkovo Institute of Science and Technology, Moscow, Russia}
\affil[4]{Yahoo! Research, Sunnyvale, CA}
\affil[5]{Rice University, Houston, TX}
\affil[6]{Institute for Information Transmission Problems RAS, Moscow, Russia}
\affil[7]{National Research University Higher School of Economics, Moscow, Russian Federation}
\affil[8]{{Mohamed bin Zayed University of Artificial Intelligence, Masdar City, Abu Dhabi, UAE}}
\begin{document}
\maketitle

\begin{abstract}
Statistical preconditioning enables fast methods for distributed large-scale empirical risk minimization problems.
In this approach, multiple worker nodes compute gradients in parallel, which are then used by the central node to update the parameter by solving an auxiliary (preconditioned) smaller-scale optimization problem.
The recently proposed Statistically Preconditioned Accelerated Gradient (SPAG) method~\cite{pmlr-v119-hendrikx20a} has complexity bounds superior to other such algorithms but requires an exact solution for computationally intensive auxiliary optimization problems at every iteration. In this paper, we propose an Inexact SPAG (InSPAG) and explicitly characterize the accuracy by which the corresponding auxiliary subproblem needs to be solved to guarantee the same convergence rate as the exact method. We build our results by first developing an inexact adaptive accelerated Bregman proximal gradient method for general optimization problems under relative smoothness and strong convexity assumptions, which may be of independent interest. Moreover, we explore the properties of the auxiliary problem in the InSPAG algorithm assuming Lipschitz third-order derivatives and strong convexity. For such problem class, we develop a linearly convergent Hyperfast second-order method and estimate the total complexity of the InSPAG method with hyperfast auxiliary problem solver. Finally, we illustrate the proposed method's practical efficiency by performing large-scale numerical experiments on logistic regression models. To the best of our knowledge, these are the first empirical results on implementing high-order methods on large-scale problems, we work with data where the dimension is of the order of~$3$ million, and the number of samples is~$700$ million.
\end{abstract}

\section{Introduction}

The efficient parallelization of large-scale learning is one of the most challenging problems in modern machine learning. Among several approaches, distributed computation and preconditioning have been shown effective in accelerating optimization algorithms, especially with increasing amounts of data~\cite{pmlr-v32-shamir14,pmlr-v119-hendrikx20a,JMLR:v21:19-764}. In this paper, we propose an efficient distributed optimization algorithm for solving the empirical risk minimization (ERM) problem:
\begin{equation}\label{eq:ERM1}
\min_{x \in \mathbb{R}^d} \left\{ f(x) \triangleq F(x) + h(x) \right\},
\end{equation}
where~$h(x)$ is a convex regularizer and~$F(x)$ is the empirical loss
\begin{equation}\label{eq:ERM1_1}
F(x) \triangleq \frac{1}{N}\sum_{i=1}^{N} \ell(x; \zeta_i).
\end{equation}
Here~$\mathcal{D}\triangleq \{\mathbf{\zeta}_i = (\mathbf{\xi}_i,\eta_i)\}_{i=1}^N$ is a set of~$N$ training data samples, and~$\ell$ is a convex loss function with respect to~$x$.  We assume that~$F$ is~$L_F$-smooth and~$\mu_F$-strongly convex, i.e.,
\begin{equation}
\label{eq:rel_str_conv_simple}
\mu_FI_d \preceq \nabla^2 F(x) \preceq L_F I_d, 
\end{equation}
where~$I_d$ is the~$d$-dimensional identity matrix. The condition number of~$F$ is denoted as~$\kappa_F = L_F/\mu_F$, and the solution to~\eqref{eq:ERM1} is denoted as~$x_*$.

Sum-type optimization problems of the form~\eqref{eq:ERM1} are used to model various statistical learning problems, including least squares regression, logistic regression, and support vector machines. One characteristic of modern uses of models like~\eqref{eq:ERM1} is the so-called large-scale regime, i.e., when~$N$ is very large. Large~$N$ poses additional challenges related to the storage and processing of data, which in turn drives the need for modern distributed/federated architectures~\cite{wang2018giant} that take advantage of parallel processing capabilities~\cite{hendrikx2020optimal}, e.g., Apache Spark~\cite{yang2013trading}, Parameter Server~\cite{li2014scaling} and MapReduce~\cite{dean2008mapreduce}. 

In practice, when~$N$ is very large, the complete set of data points~$\mathcal{D}$ cannot be stored or is not accessible at a single machine. Instead, data is distributed among~$m$ computing units/nodes/agents such that~$\mathcal{D} = \{\mathcal{D}_1, \ldots, \mathcal{D}_m\}$. Moreover, the distributed setup assumes there is a central node, that is able to communicate with all the worker nodes. Without loss of generality we assume that~$N = mn$, i.e., machine~$j\in \{1,\hdots,m\}$ locally stores~$n$ samples~$\mathcal{D}_j = \{\mathbf{\xi}_i^{(j)}, \eta_i^{(j)}\}_{i=1}^n$. Specifically, each agent~$j$ has a local empirical risk, denoted as~$F_j (x) \triangleq ({1}/{n})\sum_{i=1}^n \ell(x; \mathbf{\xi}_i^{(j)}, \eta_i^{(j)})$. Thus,
\begin{equation}\label{eq:ERM2}
F(x) {=} \frac{1}{m}\sum_{j=1}^{m} F_j(x) 
 {=} \frac{1}{nm}\sum_{j=1}^{m} \sum_{i=1}^{n} \ell(x; \mathbf{\xi}_i^{(j)},\eta_i^{(j)}) .
\end{equation}

The centralized distributed optimization architecture described above, with a central node and a number of workers, typically involves of two resources: communication and computation. Communication is usually regarded as the most valuable resource~\cite{lan17}. Thus, recent efforts~\cite{pmlr-v32-shamir14,pmlr-v119-hendrikx20a,JMLR:v21:19-764} have been focused on the efficiency of communications, where one seeks to minimize~\eqref{eq:ERM2} with a minimal number of communication rounds between the workers and the central node.

\textit{Recent Distributed Optimization Approaches: }
The distributed approximate Newton-type method (DANE)~\cite{pmlr-v32-shamir14} has been one of the most popular second-order methods for communication-efficient distributed machine learning. DANE improves the polynomial dependency of the iteration complexity on the condition number~$\kappa_F$ of first-order methods for distributed empirical risk minimization problems, compared to the geometric rates available for centralized, i.e., non-distributed, methods~\cite{nesterov2018lectures}. Particularly, DANE has an iteration (communication) complexity of~$\widetilde O (\kappa_F^2/n)$\footnote{The~$\widetilde O$-notation means non-asymptotic inequality up to constant and poly-logarithmic factors. More precisely,~$A = \widetilde{O}(B)$ if there exist constants~$C,a>0$ such that~$A\leq C B\ln^a\frac{1}{\varepsilon}$.} for quadratic functions, and~$\widetilde O (\kappa_F)$ for convex non-quadratic functions. However, DANE requires the exact solution of a carefully constructed subproblem, which can be impractical~\cite{pmlr-v32-shamir14}. An inexact version of DANE, termed \mbox{InexactDANE}~\cite{reddi2016aide}, and its accelerated variant, termed AIDE~\cite{reddi2016aide}, achieve an iteration complexity of~$\widetilde O (\kappa_F)$, and~$\widetilde O (\sqrt{\kappa_F})$ respectively, without requiring exact solutions of the auxiliary subproblem. For quadratic functions InexactDANE and AIDE have an iteration complexity of~$\widetilde O (\kappa_F^2/n)$ and~$\widetilde O (\sqrt{\kappa_F}/n^{1/4})$ respectively. Nevertheless, the advantage of preconditioning, where the condition number is effectively reduced as~$n$ increases, was only shown for quadratic problems. Recently, in~\cite{JMLR:v21:19-764}, the authors showed that the preconditioning effect holds locally for a variation of DANE termed DANE-HB with inexact solutions to the local subproblem. Specifically, an iteration complexity of~$\widetilde O (d^{1/4}\sqrt{\kappa_F}/n^{1/4})$ was shown to hold in a neighborhood around the optimal point for non-quadratic convex functions. Additionally, for linear prediction models, an improved global bound of~$\widetilde O (\sqrt{\kappa_F}/n^{1/4})$  was shown~\cite{JMLR:v21:19-764} to be achieved by the D$^2$ANE Algorithm. 
In~\cite{pmlr-v37-zhangb15} the authors propose the DiSCO algorithm with global bounds ~$\widetilde O (\sqrt{\kappa_F}/n^{1/4})$ for quadratic functions and~$\widetilde O (d^{1/4}\sqrt{\kappa_F}/n^{1/4})$ for self-concordant functions which are a different class than functions satisfying \eqref{eq:rel_str_conv_simple}.
One of the main observations in~\cite{JMLR:v21:19-764} is that the looseness in the bounds of DANE and AIDE came from the reduce (model aggregation) step done by the central node. Thus, DANE-HB and D$^2$ANE build their results from a modified structure. The worker nodes compute gradients and communicate them back to the central node, which solves the preconditioned auxiliary subproblem. Such algorithmic structure was used in~\cite{pmlr-v119-hendrikx20a} recently, where the authors proposed the Statistically Preconditioned Accelerated Gradient (SPAG) method. SPAG has an iteration complexity of~$\widetilde O (\sqrt{\kappa_F}/n^{1/4})$ for quadratic functions with direct acceleration, instead of using the Catalyst framework~\cite{catalyst}. SPAG was also shown to have an asymptotic iteration complexity of~$\widetilde O (\sqrt{\kappa_F}/n^{1/4})$, with empirical evidence that such rate behavior holds non-asymptotically in practice. However, exact solvers for the auxiliary subproblem on the central node are required. Such convergence rates match complexity lower bounds~\cite{dragomir2019optimal,arjevani}. In a more challenging setup (which we do not consider in this paper) of decentralized distributed optimization~\cite{sun2019distributed} propose an algorithm with iteration complexity~$\widetilde O ( \kappa_F/\sqrt{n})$ and similar up to a network-dependent factor communication complexity.

Although SPAG obtains the near-optimal iteration complexity for distributed algorithms applied to~\eqref{eq:ERM1}, and \eqref{eq:ERM2}, it strongly depends on the ability to exactly solve an intermediate auxiliary optimization subproblem (usually in the form of a non-Euclidean Bregman projection), whose complexity was not explicitly taken into account in~\cite{pmlr-v119-hendrikx20a}. More importantly, as pointed out in~\cite{pmlr-v119-hendrikx20a}, such an intermediate problem is computationally hard, and the accuracy of its solution dramatically affects the performance of the whole method. \textit{We solve this issue in this paper.} 

Our solution's key innovation is explicitly considering the auxiliary subproblem's inexactness and quantifying how it affects the convergence rate of the whole algorithm. Moreover, for the case of functions with high-order bounded derivatives (e.g., logistic regression or softmax problems~\cite{pmlr-v125-bullins20a}), we provide a Hyperfast second-order method that efficiently computes the approximate solution of the subproblem. This approach builds upon the line of works on \textit{implementable} tensor methods for \textit{convex} problems recently initiated\footnote{
We underline that the main words here are \textit{implementable} and \textit{convex}. Adaptive tensor methods with optimal complexity guarantees for non-convex problems were proposed earlier in~\cite{birgin2017worst-case,carmon2020lower,cartis2019universal}, and previously known tensor methods for convex problems~\cite{baes2009estimate} did not necessarily have convex auxiliary problem in each iteration.
}
by Yu. Nesterov~\cite{nesterov2019implementable}, where it was shown that the third-order method for convex problems with Lipschitz third-order derivative could have a convex subproblem and its solution can be efficiently implemented. 
Later,~\cite{gasnikov2019near} proposed near-optimal tensor methods with complexity bounds which match up to a logarithmic factor the lower bounds for highly-smooth convex optimization.~\cite{nesterov2020superfast} proposes a third-order tensor method with third-order derivative approximated by finite-difference of gradients, which leads to a Superfast second-order method with convergence rate~$O(1/k^4)$ for convex functions with Lipschitz third-order derivative. As a next step,~\cite{nesterov2020inexact} proposes an inexact accelerated high-order proximal point method which allows improving, compared to Superfast second-order method, the convergence rate to~$O(1/k^5)$ up to logarithmic factors. In parallel to the previous work and inspired by~\cite{nesterov2020superfast}, the authors of~\cite{kamzolov2020near} proposed a Hyperfast second-order method with the same convergence rate, but based on another accelerated high-order method developed in~\cite{gasnikov2019near}. In this paper, we extend both methods to the setting of strongly convex minimization problems and apply them to solve the intermediate auxiliary optimization subproblem in each iteration of our inexact version of SPAG.

\paragraph{Contributions}
SPAG is one of the fastest distributed methods (in terms of communication steps number) for the minimization of~\eqref{eq:ERM1}, and \eqref{eq:ERM2} with i.i.d. samples~\cite{pmlr-v119-hendrikx20a}. Moreover, the Hyperfast second-order method is the best known (near-optimal) second-order method to minimize convex functions with Lipschitz third-order derivatives. We argue that the extended combination of the proposed inexact SPAG and the new Hyperfast second-order method provides a useful approach to construct new efficient distributed algorithms. Specifically, in SPAG, the central node solves a problem with a similar structure as~\eqref{eq:ERM1}, but with a smaller number~$n$ of data samples. Therefore, with a reduced number of samples, the complexity of calculating the Hessian is comparable (due to the sum-type structure of~$F$) with its inversion by the matrix inversion lemma~\cite{cormen2009introduction} and modern practical versions of Strassen-type algorithm~\cite{huang2016strassen}. In this regime, at the central node, Hyperfast second-order methods outperform existing variance-reduced stochastic first-order schemes.
\textit{We extend the theoretical analysis of inexact statistical preconditioning methods alongside high-order methods and show that they jointly provide an efficient second-order method that outperforms (from theoretical and practical points of view) well-known (stochastic) first-order schemes.} 

The main contributions of this paper are as follows:
\begin{itemize}[leftmargin=5mm, noitemsep, topsep = .2mm]
    \item Since SPAG is based on the accelerated Bregman proximal gradient method for relatively smooth and strongly-convex problems, we first propose an inexact accelerated Bregman proximal gradient method for general convex optimization problems. Our algorithm is based on an inexact model for the objective, which subsumes the setting of relatively smooth and (strongly-)convex problems and the setting of inexact first-order oracles. Our algorithm also allows for approximate Bregman projections. We estimate the convergence rate and rates of inexactnesses accumulation.
    \item We propose an Inexact Statistically Preconditioned Accelerated Gradient (InSPAG) method for distributed optimization problem \eqref{eq:ERM1}, \eqref{eq:ERM2}, and explicitly characterize the accuracy by which the corresponding auxiliary subproblem needs to be solved to guarantee the same convergence rate as the exact method, i.e.,~$\widetilde O (\sqrt{\kappa_F}/n^{1/4})$. Our method is not a direct extension and has a slightly simpler structure than the method in~\cite{pmlr-v119-hendrikx20a}.
    \item We extend and generalize the Hyperfast second-order method~\cite{nesterov2020inexact,kamzolov2020near}, recently proposed for smooth and convex problems, to the class of uniformly, and especially strongly, convex functions. We show a linear convergence rate for this problem class. 
    \item Based on an example of sparse logistic regression, we discuss the distributed optimization problem regime, for which Hyperfast second-order optimization methods provide a theoretical advantage over classical first-order methods for the problem size, dimension, and desired accuracy of the solution.
    \item We provide experimental results in application to large-scale machine learning problems that show the efficiency of the use of high-order methods in practice. To the authors' best knowledge, this is one of the first attempts to apply near-optimal tensor methods for real data and applications. Specifically, we test the proposed algorithm on a proprietary data set with~$710$ million entries and a dimension of~$3.2$ million.  
\end{itemize}

\paragraph{Outline}  In Section \ref{S:AGM}, we introduce the inexact accelerated Bregman proximal gradient method for general convex optimization problems. This includes defining the concept of the inexact model of the objective, illustrating it by examples, presenting the algorithm and its convergence rate theorem together with its proof. Section~\ref{sec:problem} presents the setting for statistically preconditioned distributed algorithms, introduces InSPAG algorithm and its convergence rate theorem. After that, we present the Hyperfast second-order method for the auxiliary subproblem of the InSPAG, estimate its complexity and combine the building blocks to obtain the total complexity of the whole approach. We finish this section by discussing the regime in which our approach is superior to applying stochastic variance-reduced algorithms. Section~\ref{sec:numerics} presents our experimental results. For the sake of completeness in Section \ref{app:uni} we present Hyperfast second-order method for uniformly convex functions. We finalize with conclusions in Section~\ref{sec:conclusions}.

\section{Accelerated Gradient Method under Inexactness and Relative Smoothness}
\label{S:AGM}

In this section, we propose a general accelerated first-order algorithm that will be used in the next section to propose our InSPAG method
for distributed optimization. We believe that the results of this section may be of independent interest. This section is, to an extent, independent of the others sections and the reader interested in the distributed optimization may skip this section since in what follows only the main result of this section (Theorem \ref{Th:fast_str_conv_adap}) will be used. We consider the following general optimization problem
\begin{equation}
\label{eq:Problem}
\min_{x\in Q} f(x),
\end{equation}
where~$Q$ is a convex subset of finite-dimensional vector space~$E$. Our goal is to develop a general accelerated inexact gradient method capable to work under relative smoothness and strong convexity assumptions~\cite{bauschke2016descent,lu2018relatively}.
We consider two types of inexactness: inexact information on the objective function and inexact generalized projection. 

Before we give more details, we introduce some general notations. Let~$E$ be an~$d$-dimensional real vector space and~$E^*$ be its dual. We denote the value of a linear function~$g \in E^*$ at~$x\in E$ by~$\la g, x \ra$. Let~$\|\cdot\|$ be some norm on~$E$,~$\|\cdot\|_{*}$ be its dual, defined by~$\|g\|_{*} = \max\limits_{x} \big\{ \la g, x \ra, \| x \| \leq 1 \big\}$. 
Let~$\phi$ be a convex function on~$Q$, which is continuously differentiable on the relative interior~${\rm ri}Q$ of~$Q$. Let~$D_{\phi}[y](x) = \phi(x) - \phi(y) - \la \nabla \phi(y), x - y \ra$,~$x \in Q, y \in {\rm ri}Q$ be the corresponding Bregman divergence. Based on the Bregman divergence we introduce the following two definitions of inexactness.

\begin{definition}[Inexact model~\cite{stonyakin2021inexact}]\label{Def:RelStronglyConvex}
Let~$\delta,L,\mu,m \geq 0$. We say that~$(f_{\delta}(y),\psi_{\delta}(x, y))$ is a~$(\delta, L, \mu, m, \phi)$-model of the function~$f$ at a given point~$y$ iff, for all~$x \in Q$,
\begin{equation}
\label{eq:Str_Conv_Model}
{\mu D_{\phi}[y](x)} \leq {f(x) - \left(f_{\delta}(y) + \psi_{\delta}(x, y)\right)} \leq {LD_{\phi}[y](x) + \delta},
\end{equation}
~$\psi_{\delta}(x, y)$  is convex in~$x$, satisfies~$\psi_{\delta}(x,x)=0$ for all~$x \in Q$ and \begin{equation}\label{mstongmodel}
\psi(x) \geqslant \psi(z) + \la g, x- z \ra + m D_{\phi}[z](x), \quad \forall x, z \in Q, \,\, \forall g \in \partial \psi(z),
\end{equation}
where for fixed~$y \in Q$ and any~$x \in Q$ we denote~$\psi(x) = \psi_{\delta}(x, y)$.  
\end{definition}

\begin{definition}[Inexact generalized projection~\cite{ben-tal2015lectures}]
\label{Def:solNemirovskiy}
For a convex optimization problem
$\min_{x \in Q} \Psi(x)$
 and~$\widetilde{\delta} \geq 0$, we denote by~$\text{\rm Arg}\min_{x \in Q}^{\widetilde{\delta}}\Psi(x)$~a set of points ~$\widetilde{x}$ such  that
\begin{gather}\label{eqv_inex_sol}
 \exists h \in \partial\Psi(\widetilde{x}) :\forall x \in Q \,\,  \to\, \langle h, x - \widetilde{x} \rangle \geq -\widetilde{\delta}.
\end{gather}
We denote by~$\argmin_{x \in Q}^{\widetilde{\delta}}\Psi(x)$  some element of~$\text{\rm Arg}\min_{x \in Q}^{\widetilde{\delta}}\Psi(x)$.
\end{definition}

Optimization algorithms with inexact model of the objective were extensively studied in~\cite{stonyakin2021inexact} and are generalizations of first-order algorithms with inexact oracle~\cite{devolder2014first,dvurechensky2016stochastic}. We now give two particular examples that are covered by the inexact model framework and refer to~\cite{stonyakin2021inexact} for further examples.

\begin{example}{\bf Relative smoothness and relative strong convexity,~\cite{bauschke2016descent, lu2018relatively}.}
\label{Ex:rel_smooth}
Assume that ~$\phi(x)$ is differentiable, and in \eqref{eq:Problem}, the objective~$f$ is differentiable, relatively smooth~\cite{bauschke2016descent,lu2018relatively} and  strongly convex ~\cite{lu2018relatively} relative to~$\phi$, i.e., for some~$\mu \geq 0$ and~$L >0$,
\[
 \mu D_{\phi}[y](x) \leq f(x)-f(y) -\la \nabla f(y), x-y \ra  \leq  LD_{\phi}[y](x), \; \forall x,y \in Q.
\]
Then, clearly, Definition \ref{Def:RelStronglyConvex} holds with~$m = 0$,~$\delta = 0$,~$f_{\delta}(y) = f(y)$,~$\psi_\delta(x,y) = \langle \nabla f(y), x - y \rangle$. Importantly, the function~$\phi$ is not required to be strongly convex. Our InSPAG relies on this particular example.
\end{example}

\begin{example}{\bf Composite optimization,~\cite{beck2009fast, nesterov2013gradient}.}
\label{Ex:composite}
Assume that in \eqref{eq:Problem},~$f(x) = g(x) + h(x)$ with convex ~$L$-smooth w.r.t. norm~$\|\cdot\|$ term~$g(x)$ and simple convex term~$h(x)$ which is usually called composite. In this case we assume that~$\phi(x)$ is 1-strongly-convex w.r.t~$\|\cdot\|$, and define~$f_{\delta}(y) =  g(y) + h(y)$ and~$\psi_\delta(x,y) = \langle \nabla g(y), x - y \rangle + h(x) - h(y)$. 
Then, clearly,
\begin{gather*}
f(x) - \left(f_{\delta}(y) + \psi_{\delta}(x, y)\right) = 
g(x) - (g(y) + \langle \nabla g(y), x - y \rangle ).
\end{gather*}
By convexity of~$g$, we have~$0 \leq g(x) - (g(y) + \langle \nabla g(y), x - y \rangle )$. At the same time, by the~$L$-smoothness of~$g$ and 1-strong-convexity of~$\phi(x)$, 
\[
g(x) - (g(y) + \langle \nabla g(y), x - y \rangle )\leq \frac{L}{2}\|x-y\|^2 \leq LD_{\phi}[y](x).
\]
From the combination of the above two relations, it is clear that \eqref{eq:Str_Conv_Model} holds with~$\delta=0$ and~$\mu=0$ and we are in the situation of Definition \ref{Def:RelStronglyConvex} with~$m=0$ since~$\psi_\delta(x,y)$ is convex in~$x$. 
\end{example}

In~\cite{stonyakin2021inexact}, to develop an accelerated algorithm, the authors use a different assumption where in the r.h.s. of \eqref{eq:Str_Conv_Model} the Bregman divergence~$D_{\phi}[y](x)$ is substituted with~$\frac{1}{2}\|x-y\|^2$, and assume that~$\phi$ is 1-strongly-convex w.r.t.~$\|\cdot\|$. This, unfortunately, restricts the range of applications of the algorithm, and we use a weaker set of assumptions in Definition \ref{Def:RelStronglyConvex}. At the same time,~\cite{dragomir2019optimal} showed that it is not possible to develop an accelerated algorithm in the relative smoothness setting without additional assumptions. Thus, we introduce the following assumption on the Bregman divergence~$D_{\phi}[y](x)$ and note that the range of applications is still wider than for the approach of~\cite{stonyakin2021inexact}. We also note that this assumption is simpler than the one in~\cite{pmlr-v119-hendrikx20a} and is a version of triangle scaling gain introduced in~\cite{hanzely2021accelerated} and triangle lower bound property of~\cite{florea2019exact}.
\begin{assumption}\label{assum:Bregman}
There exists a constant~$G \geq 1$ such that for all~$x,y,u,u_{+} \in {\rm ri \; dom} \phi$ such that~$x-y=\tau(u_+-u)$ for some~$\tau \in [0,1]$ it holds that
\begin{equation}
    \label{eq:D_phi_assum}
    D_{\phi}[y](x) \leq G \tau^2 D_{\phi}[u](u_+).
\end{equation}
\end{assumption}
This assumption can be seen as a relaxation of homogeneity of degree 2. The simplest example when this property holds is when~$D_{\phi}[y](x) = \frac{1}{2}\|y-x\|^2$. In this case~$G=1$. We also note that our algorithm is adaptive to constant~$G$ which means that the property \eqref{eq:D_phi_assum}  is sufficient to hold only locally.

The proposed accelerated gradient method with inexact model is listed below as Algorithm \ref{alg:FastAlg2_strong}. Unlike~\cite{pmlr-v119-hendrikx20a,hanzely2021accelerated,florea2019exact}, our algorithm is simultaneously adaptive to the ``Lipschitz'' constant~$L$ (see Definition \ref{Def:RelStronglyConvex}) and constant~$G$ in Assumption \ref{assum:Bregman}, which is expressed in constant~$M$ that plays the role of the product~$LG$. Also, unlike~\cite{pmlr-v119-hendrikx20a,hanzely2021accelerated,florea2019exact}, our algorithm allows two types of inexactness covered by Definitions \ref{Def:RelStronglyConvex} and \ref{Def:solNemirovskiy}. Finally, unlike~\cite{hanzely2021accelerated,florea2019exact}, our algorithm has linear convergence when~$\mu>0$. We also note that we allow the accuracies~$\delta,\widetilde{\delta}$ in Definition \ref{Def:RelStronglyConvex} and \ref{Def:solNemirovskiy} to depend on the iteration counter~$k$, which is expressed by the sequences~$\{\delta_k,\widetilde{\delta}_k\}_{k \geq 0}$.

\begin{algorithm}[h!]
\caption{{Accelerated gradient method with~$(\delta, L,
\mu, m, \phi)$-model}}
\label{alg:FastAlg2_strong}
\begin{algorithmic}[1]
\STATE \textbf{Input:}~$x_0$ is the starting point,
$\mu \geq 0$,~$m \geq 0$,
$\{\delta_k\}_{k\geq 0}$ and
\STATE Set
$y_0 := x_0$,~$u_0 := x_0$,~$\alpha_0 := 0$,~$A_0 := \alpha_0$
\FOR{$k \geq 0$}
\STATE Find the smallest integer~$i_k \geq 0$ such that
\vspace{-0.25cm}
\begin{equation}
\begin{gathered}
f_{\delta_k}(x_{k+1}) \leq f_{\delta_k}(y_{k+1}) + \psi_{\delta_k}(x_{k+1}, y_{k+1}) +\frac{M_{k+1}\alpha^2_{k+1}}{A_{k+1}^2}D_{\phi}[u_{k}](u_{k+1}) + \delta_k,
\label{exitLDL_strong}
\end{gathered}
\end{equation}
where~$M_{k+1} = 2^{i_k-1}M_k$,~$\alpha_{k+1}$ is the largest root  of the equation
\begin{gather}
\label{alpha_def_strong}
A_{k+1}{(1 + A_k \mu + A_k m)}=M_{k+1}\alpha^2_{k+1},\quad A_{k+1} := A_k + \alpha_{k+1}, \;\text{and}
\end{gather}
\vspace{-0.7cm}
\begin{gather}
y_{k+1} := \frac{\alpha_{k+1}u_k + A_k x_k}{A_{k+1}}, \label{eqymir2DL_strong}
\end{gather}
\vspace{-0.7cm}
\begin{equation*}
\hspace{-2em}\Phi_{k+1}(x):=\alpha_{k+1}\psi_{\delta_k}(x, y_{k+1}) + {(1 + A_k(\mu + m))} D_{\phi}[u_k](x) + {\alpha_{k+1} \mu D_{\phi}[y_{k+1}](x)},
\end{equation*}
\vspace{-0.7cm}
\begin{equation}\label{equmir2DL_strong}
u_{k+1} := {\argmin_{x \in Q}}^{\widetilde{\delta}_k}\Phi_{k+1}(x), \;\;\text{for some~$\widetilde{\delta}_k \geq 0$}
\end{equation}
\vspace{-0.7cm}
\begin{gather}
x_{k+1} := \frac{\alpha_{k+1}u_{k+1} + A_k x_k}{A_{k+1}}. \label{eqxmir2DL_strong}
\end{gather}
\vspace{-1cm}
\STATE Set~$k:=k+1$.
\ENDFOR
\STATE \textbf{Ouput:}~$x_k$
\end{algorithmic}
\end{algorithm}

The following is the convergence rate result for the proposed algorithm.
\begin{theorem}
\label{Th:fast_str_conv_adap}
Assume that ~$(f_{\delta}(y),\psi_{\delta}(x, y))$ is a~$(\delta, L, \mu, m, \phi)$-model according to Definition \ref{Def:RelStronglyConvex}. Also assume that~$D_{\phi}[y](x)$ satisfies Assumption \ref{assum:Bregman}. Then, after~$N$ iterations of Algorithm \ref{alg:FastAlg2_strong}, we have
\begin{align}
    &f(x_N) - f(x_*) \leq \frac{D_{\phi}[u_0](x_*)}{A_N} + \frac{2\sum_{k=0}^{N-1}A_{k+1}\delta_k}{A_N} + \frac{\sum_{k=0}^{N-1}\widetilde{\delta}_k}{A_N}, \label{Th:fast_str_conv_adap:result_1} \\
    &D_{\phi}[u_N](x_*)\leq \frac{D_{\phi}[u_0](x_*)}{(1 + A_N\mu + A_Nm)} + \frac{2\sum_{k=0}^{N-1}A_{k+1}\delta_k}{(1 + A_N\mu + A_Nm)} + \frac{\sum_{k=0}^{N-1}\widetilde{\delta}_k}{(1 + A_N\mu +  A_Nm)}. \label{Th:fast_str_conv_adap:result_2}
    \end{align}
\end{theorem}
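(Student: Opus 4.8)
The plan is to establish a ``potential function'' (estimating-sequence) argument. Define, for each $k$, the quantity
\[
R_k := A_k\bigl(f(x_k) - f(x_*)\bigr) + (1 + A_k\mu + A_k m)\, D_{\phi}[u_k](x_*) + E_k,
\]
where $E_k := 2\sum_{j=0}^{k-1}A_{j+1}\delta_j + \sum_{j=0}^{k-1}\widetilde\delta_j$ collects the accumulated inexactness. I would prove the monotonicity-type inequality $R_{k+1} \le R_k$ for every $k \ge 0$ by induction. Since $A_0 = 0$ and $u_0 = x_0$, we have $R_0 = D_{\phi}[u_0](x_*)$, so unrolling $R_N \le R_0$ and dropping the nonnegative term $(1 + A_N\mu + A_N m)D_{\phi}[u_N](x_*)$ gives \eqref{Th:fast_str_conv_adap:result_1}, while dropping instead the term $A_N(f(x_N)-f(x_*))$ (which is nonnegative since $x_*$ minimizes $f$) and rearranging gives \eqref{Th:fast_str_conv_adap:result_2}. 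So the entire theorem reduces to the one-step inequality.

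For the one-step inequality, the main work is to lower-bound $R_k$ in terms of quantities at step $k+1$. First I would use the left inequality in the model definition \eqref{eq:Str_Conv_Model} together with convexity of $\psi_{\delta_k}$ (and the $m$-strong-convexity property \eqref{mstongmodel} relative to $\phi$) to write, for the auxiliary objective $\Phi_{k+1}$, a lower bound of the form $\Phi_{k+1}(x) \ge \Phi_{k+1}(u_{k+1}) + (\text{strong convexity gain})\cdot D_{\phi}[u_{k+1}](x) - \widetilde\delta_k$, valid for all $x \in Q$, using Definition~\ref{Def:solNemirovskiy} for the inexact minimizer $u_{k+1}$; here the strong convexity modulus of $\Phi_{k+1}$ relative to $\phi$ is at least $(1 + A_k(\mu+m)) + \alpha_{k+1}\mu + \alpha_{k+1} m = 1 + A_{k+1}\mu + A_{k+1}m$ by \eqref{alpha_def_strong}-type bookkeeping (using $m D_\phi$-convexity of $\psi$). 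Evaluating this at $x = x_*$ and using $A_k(f(x_k) - f(x_*)) \ge A_k\langle$subgradient info$\rangle$ plus the definition of $y_{k+1}$ as the convex combination \eqref{eqymir2DL_strong}, one telescopes the linear-in-$x$ terms: the combination $\alpha_{k+1}\psi_{\delta_k}(x_*, y_{k+1}) + A_k(f(x_k) - f_{\delta_k}(y_{k+1}))$ should collapse, via convexity of $f$ through the model, to something like $A_{k+1}(f_{\delta_k}(y_{k+1}) - f(x_*)) + A_{k+1}\psi_{\delta_k}(\cdot)$-type terms minus $A_k\delta_k$-order errors. This is the standard accelerated-method algebra, but done with the Bregman divergence rather than a quadratic.

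The step I expect to be the main obstacle is controlling the ``descent'' part: showing that $A_{k+1}(f(x_{k+1}) - f(x_*))$ is bounded above by $\Phi_{k+1}(u_{k+1})$ plus the telescoped terms, \emph{using} Assumption~\ref{assum:Bregman} and the line-search exit condition \eqref{exitLDL_strong}. Concretely, $x_{k+1} - y_{k+1} = \frac{\alpha_{k+1}}{A_{k+1}}(u_{k+1} - u_k)$ by \eqref{eqxmir2DL_strong} and \eqref{eqymir2DL_strong}, so Assumption~\ref{assum:Bregman} with $\tau = \alpha_{k+1}/A_{k+1}$ gives $D_{\phi}[y_{k+1}](x_{k+1}) \le G\frac{\alpha_{k+1}^2}{A_{k+1}^2}D_{\phi}[u_k](u_{k+1})$; combined with the right inequality of the model \eqref{eq:Str_Conv_Model} this would bound $f(x_{k+1}) - (f_{\delta_k}(y_{k+1}) + \psi_{\delta_k}(x_{k+1},y_{k+1}))$ by $LG\frac{\alpha_{k+1}^2}{A_{k+1}^2}D_{\phi}[u_k](u_{k+1}) + \delta_k$ --- which is exactly the content of the exit test \eqref{exitLDL_strong} with $M_{k+1}$ in the role of $LG$, so that the adaptive search terminates and $M_{k+1}$ stays bounded. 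The delicate point is interleaving this upper bound on $f(x_{k+1})$ with the convexity bound $\psi_{\delta_k}(x_{k+1},y_{k+1}) \le \frac{\alpha_{k+1}}{A_{k+1}}\psi_{\delta_k}(u_{k+1},y_{k+1}) + \frac{A_k}{A_{k+1}}\psi_{\delta_k}(x_k,y_{k+1})$ and with the relation $M_{k+1}\alpha_{k+1}^2 = A_{k+1}(1 + A_k\mu + A_k m)$ from \eqref{alpha_def_strong}, so that the $D_\phi[u_k](u_{k+1})$ terms match precisely and the coefficients of $D_\phi[u_k](x_*)$ and $D_\phi[u_{k+1}](x_*)$ line up to produce the claimed $(1 + A_{k+1}\mu + A_{k+1}m)$ factor; tracking how the two error sequences $\delta_k$ and $\widetilde\delta_k$ enter (each multiplied by $A_{k+1}$ or by $1$, respectively, as in $E_k$) requires care but is mechanical once the structure is set up.
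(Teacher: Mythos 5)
Your proposal takes essentially the same route as the paper's proof: a per-iteration estimate built from the exit condition \eqref{exitLDL_strong} (justified via Assumption~\ref{assum:Bregman} with $x_{k+1}-y_{k+1}=\tfrac{\alpha_{k+1}}{A_{k+1}}(u_{k+1}-u_k)$, which also keeps $M_{k+1}\le 2LG$), the convexity split of $\psi_{\delta_k}$, the relation \eqref{alpha_def_strong}, and a Lemma~\ref{lemma:fast_str_conv}-type lower bound for the $\widetilde{\delta}_k$-inexact minimizer of $\Phi_{k+1}$ with modulus $1+A_{k+1}(\mu+m)$, then telescoped at $x=x_*$. The only slip is the sign of $E_k$: the one-step inequality yields $A_{k+1}(f(x_{k+1})-f(x_*))+(1+A_{k+1}(\mu+m))D_{\phi}[u_{k+1}](x_*)\le A_k(f(x_k)-f(x_*))+(1+A_k(\mu+m))D_{\phi}[u_k](x_*)+2A_{k+1}\delta_k+\widetilde{\delta}_k$, so the monotone potential is yours with $E_k$ subtracted rather than added (with $+E_k$ the claimed $R_{k+1}\le R_k$ would be a strictly stronger, generally false statement); after this trivial fix your argument coincides with the paper's.
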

In order to prove Theorem \ref{Th:fast_str_conv_adap} we need the following technical Lemma.
\begin{lemma}[\cite{stonyakin2021inexact},\, Lemma 3.5.]
    \label{lemma:fast_str_conv}
	Let~$\psi(x)$ be a relatively~$m$-strongly convex function relative to~$\phi$ with~$m \geq 0$, i.e. \eqref{mstongmodel} holds, and
	\begin{gather*}
	y = {\argmin_{x \in Q}}^{\widetilde{\delta}} \{\psi(x) + \beta D_{\phi}[z](x) + \gamma D_{\phi}[u](x)\},
	\end{gather*}
	where~$\beta \geq 0$ and~$\gamma \geq 0$.
Then, for all~$x \in Q$,
	\begin{equation*}
	\psi(x) + \beta D_{\phi}[z](x) + \gamma D_{\phi}[u](x) \geq \psi(y) + \beta V[z](y) + \gamma D_{\phi}[u](y) + (\beta + \gamma + m)D_{\phi}[y](x) - \widetilde{\delta}.
	\end{equation*}
\end{lemma}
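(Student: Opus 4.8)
\textbf{Proof plan for Lemma~\ref{lemma:fast_str_conv}.}

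The plan is to derive the inequality from the optimality condition characterizing the inexact generalized projection~$y$ in the sense of Definition~\ref{Def:solNemirovskiy}, combined with a three-point identity for Bregman divergences. First I would introduce the notation~$\Psi(x) = \psi(x) + \beta D_{\phi}[z](x) + \gamma D_{\phi}[u](x)$, so that~$y = \argmin_{x\in Q}^{\widetilde\delta}\Psi(x)$. By Definition~\ref{Def:solNemirovskiy}, there exists a subgradient~$h \in \partial\Psi(y)$ such that~$\la h, x - y\ra \geq -\widetilde\delta$ for all~$x \in Q$. The subdifferential of~$\Psi$ decomposes as~$h = g + \beta(\nabla\phi(y) - \nabla\phi(z)) + \gamma(\nabla\phi(y) - \nabla\phi(u))$ for some~$g \in \partial\psi(y)$, since the gradients of the two Bregman terms with respect to their first argument are~$\nabla_x D_{\phi}[z](x)\big|_{x=y} = \nabla\phi(y) - \nabla\phi(z)$ and likewise for the~$u$ term.

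Next I would estimate~$\Psi(x) - \Psi(y)$ term by term. For the~$\psi$ part I invoke the relative~$m$-strong convexity assumption~\eqref{mstongmodel}, which with~$z := y$ and the chosen subgradient~$g$ gives~$\psi(x) \geq \psi(y) + \la g, x - y\ra + m D_{\phi}[y](x)$. For each Bregman term I use the exact three-point identity~$D_{\phi}[z](x) = D_{\phi}[z](y) + D_{\phi}[y](x) + \la \nabla\phi(y) - \nabla\phi(z), x - y\ra$, and the analogous identity for the~$u$ term. The key point is that when I sum all contributions, the inner-product terms assemble into exactly~$\la h, x - y\ra$, which is bounded below by~$-\widetilde\delta$ via the inexact optimality condition. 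The remaining terms yield~$\psi(y) + \beta D_{\phi}[z](y) + \gamma D_{\phi}[u](y)$ plus the divergence coefficients~$(\beta + \gamma + m)D_{\phi}[y](x)$ acting on~$D_{\phi}[y](x)$. This recovers the claimed bound, with~$V[z](y)$ understood as~$D_{\phi}[z](y)$ (the notation~$V$ for the Bregman divergence/prox-function being inherited from~\cite{stonyakin2021inexact}).

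The main obstacle I anticipate is handling the inexactness bookkeeping cleanly: one must verify that the single error term~$-\widetilde\delta$ appears exactly once and is not inadvertently multiplied by a coefficient, which requires the inner-product aggregation to collapse precisely into~$\la h, x - y\ra$ with coefficient one on each piece. A secondary subtlety is the regularity needed to split the subdifferential of the sum~$\partial\Psi(y) = \partial\psi(y) + \beta\nabla_x D_{\phi}[z](y) + \gamma\nabla_x D_{\phi}[u](y)$; this is justified because the two Bregman terms are differentiable on~${\rm ri}\,Q$ (as~$\phi$ is continuously differentiable there), so only~$\psi$ contributes a genuine subdifferential and the Moreau--Rockafellar sum rule applies without qualification issues. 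Since the result is quoted directly from~\cite{stonyakin2021inexact}, I would present this as a self-contained verification rather than a fundamentally new argument, emphasizing the three-point identity as the structural heart of the proof.
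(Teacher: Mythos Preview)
Your proposal is correct and is precisely the standard argument for this type of ``three-point'' lemma: combine the inexact first-order optimality condition from Definition~\ref{Def:solNemirovskiy}, the subdifferential decomposition of the composite objective, the relative $m$-strong convexity of $\psi$, and the Bregman three-point identity $D_{\phi}[z](x) = D_{\phi}[z](y) + D_{\phi}[y](x) + \la \nabla\phi(y) - \nabla\phi(z), x-y\ra$ so that the inner-product residuals aggregate into $\la h, x-y\ra \geq -\widetilde\delta$. Note that the paper does not supply its own proof of this lemma---it is quoted verbatim from \cite{stonyakin2021inexact}, Lemma~3.5---so there is no alternative argument here to compare against; your verification matches what the cited reference does.
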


\begin{proof}[Proof of Theorem \ref{Th:fast_str_conv_adap}]
We start by proving the correctness of the algorithm, i.e. that if we fix iteration~$k$, there exists~$i_k \geq 0$ such that  \eqref{exitLDL_strong} holds.
By Definition~\ref{Def:RelStronglyConvex} with~$ x = y$, we have~$f_{\delta_k}(y)  \leq f(y)$. Thus, from \eqref{eq:Str_Conv_Model}
\begin{equation}
    f_{\delta_k}(x_{k+1}) \leq  
    f_{\delta_k}(y_{k+1}) + \psi_{\delta_k}(x_{k+1},y_{k+1})  +
    LD_{\phi}[y_{k+1}](x_{k+1}) + \delta_k.
\end{equation}
Combining this with Assumption \ref{assum:Bregman} and using \eqref{eqymir2DL_strong}, \eqref{eqxmir2DL_strong}, we further obtain
\begin{equation}
    f_{\delta_k}(x_{k+1}) \leq  
    f_{\delta_k}(y_{k+1}) + \psi_{\delta_k}(x_{k+1},y_{k+1})  +
    \frac{LG\alpha_{k+1}^2}{A_{k+1}^2}D_{\phi}[u_{k}](u_{k+1}) + \delta_k.
\end{equation}
Since~$M_{k+1} = 2^{i_k-1}M_k$, we see that as soon as~$M_{k+1} \geq LG$, \eqref{exitLDL_strong} holds. Thus, the algorithm is correctly defined. Note also that by the same reason we have
\begin{equation}
    \label{eq:M_bound}
    M_{k+1} \leq 2LG.
\end{equation}

Our next goal is to prove that, for all~$x \in Q$, we have
\begin{align}
    &A_{k+1} f(x_{k+1}) - A_{k} f(x_{k}) + (1 + A_{k+1} \mu + A_{k+1} m) D_{\phi}[u_{k+1}](x) \notag \\
    &- (1 + A_k \mu + A_k m)D_{\phi}[u_{k}](x) 
    \leq \alpha_{k+1}f(x) + 2\delta_k A_{k+1} + \widetilde{\delta}_k. \label{eq:AGM_per_iter}
\end{align}

Since by Definition \ref{Def:RelStronglyConvex} with~$x = y$, we get~$f(x) - \delta_k \leq  f_{\delta_k}(x)  \leq f(x)$, and, using \eqref{exitLDL_strong}, we have
	\begin{align*}
	f(x_{k+1}) \stackrel{\eqref{eq:Str_Conv_Model}}{\leq} f_{\delta_k}(x_{k+1}) + \delta_k \stackrel{\eqref{exitLDL_strong}}{\leq} &f_{\delta_k}(y_{k+1}) + \psi_{\delta_k}(x_{k+1},y_{k+1})  \\
	&+
    \frac{M_{k+1}\alpha_{k+1}^2}{A_{k+1}^2}D_{\phi}[u_{k}](u_{k+1}) + 2\delta_k.
	\end{align*}
Substituting in this expression definition 	\eqref{eqxmir2DL_strong} of the point~$x_{k+1}$, using that~$A_{k+1}=A_k + \alpha_{k+1}$ and that, by Definition \ref{Def:RelStronglyConvex},~$\psi_{\delta_k}(\cdot,y)$ is convex, we have
\begin{align*}
	f(x_{k+1})
	&\leq\frac{A_k}{A_{k+1}}\left(f_{\delta_k}(y_{k+1}) + \psi_{\delta_k}(x_k, y_{k+1})\right)+\frac{\alpha_{k+1}}{A_{k+1}}\left(f_{\delta_k}(y_{k+1}) +
	 \psi_{\delta_k}(u_{k+1}, y_{k+1})\right)\\
	 &\hspace{2em}+ \frac{M_{k+1}\alpha_{k+1}^2}{A_{k+1}^2}D_{\phi}[u_{k}](u_{k+1}) + 2\delta_k.
\end{align*}	
In view of the definition \eqref{alpha_def_strong} of the sequence~$\alpha_{k+1}$ and left inequality in \eqref{eq:Str_Conv_Model}, we obtain
   \begin{align}
   \begin{split}
   \label{lemma:fast_str_conv:ineq_1}
	 f(x_{k+1}) &\leq \frac{A_k}{A_{k+1}}f(x_{k})+\frac{\alpha_{k+1}}{A_{k+1}}\Big(f_{\delta_k}(y_{k+1}) + \psi_{\delta_k}(u_{k+1},y_{k+1})\\
	 &\hspace{2em}+ \frac{1 + A_k \mu + A_k m}{\alpha_{k+1}}D_{\phi}[u_k](u_{k+1})\Big) + 2\delta_k.
	 \end{split}
  \end{align}
By Lemma \ref{lemma:fast_str_conv}, for the optimization problem in \eqref{equmir2DL_strong} with~$\psi(x) = \alpha_{k+1}\psi_{\delta_k}(x, y_{k+1})$,~$\beta = 1 + A_k\mu + A_k m$,~$z = u_k$,~$\gamma = \alpha_{k+1}\mu$, and~$u = y_{k+1}$,
it holds that
\begin{align*}
&\alpha_{k+1}\psi_{\delta_k}(u_{k+1}, y_{k+1}) + (1 + A_k \mu + A_k m)D_{\phi}[u_{k}](u_{k+1}) + \alpha_{k+1} \mu D_{\phi}[y_{k+1}](u_{k+1}) \\
&\hspace{2em}+ (1 + A_{k+1}\mu + A_{k+1} m)D_{\phi}[u_{k+1}](x) - \widetilde{\delta}_k \\
&\leq \alpha_{k+1}\psi_{\delta_k}(x, y_{k+1}) + (1 + A_k \mu + A_k m)D_{\phi}[u_{k}](x) + \alpha_{k+1} \mu D_{\phi}[y_{k+1}](x).
\end{align*}
From the fact that~$D_{\phi}[y_{k+1}](u_{k+1}) \geq 0$, we have
\begin{align}
\begin{split}
   \label{lemma:fast_str_conv:ineq_2}
&\alpha_{k+1}\psi_{\delta_k}(u_{k+1}, y_{k+1}) + (1 + A_k\mu + A_k m)D_{\phi}[u_{k}](u_{k+1})\\
&\leq \alpha_{k+1}\psi_{\delta_k}(x, y_{k+1}) + (1 + A_k\mu + A_k m)D_{\phi}[u_{k}](x)\\
&\hspace{2em}- (1 + A_{k+1}\mu +A_{k+1} m)D_{\phi}[u_{k+1}](x) + \alpha_{k+1} \mu D_{\phi}[y_{k+1}](x) + \widetilde{\delta}_k.
\end{split}
\end{align}
Combining \eqref{lemma:fast_str_conv:ineq_1} and \eqref{lemma:fast_str_conv:ineq_2}, we obtain
  \begin{align*}
  f(x_{k+1})
     &\leq \frac{A_k}{A_{k+1}} f(x_k)+\frac{\alpha_{k+1}}{A_{k+1}}\Big(f_{\delta_k}(y_{k+1}) + \psi_{\delta_k}(x,y_{k+1}) + {\mu D_{\phi}[y_{k+1}](x)}\\
	 &\hspace{2em}+ \frac{1 + A_k \mu + A_k m}{\alpha_{k+1}}D_{\phi}[u_k](x) \\
	 &\hspace{2em}- \frac{1 + A_{k+1} \mu + A_{k+1} m}{\alpha_{k+1}}D_{\phi}[u_{k+1}](x) + \frac{\widetilde{\delta}_k}{\alpha_{k+1}}\Big) + 2\delta_k.
\end{align*}
We finish the proof of \eqref{eq:AGM_per_iter} applying the left inequality in \eqref{eq:Str_Conv_Model}:
\begin{align*}
    f(x_{k+1})
	 &\leq \frac{A_k}{A_{k+1}} f(x_k) +
	 \frac{\alpha_{k+1}}{A_{k+1}} f(x) + \frac{1 + A_k \mu + A_k m}{A_{k+1}}D_{\phi}[u_k](x) \\
	 & \hspace{2em}- \frac{1 + A_{k+1} \mu + A_{k+1} m}{A_{k+1}}D_{\phi}[u_{k+1}](x) + 2\delta_k + \frac{\widetilde{\delta}_k}{A_{k+1}}.
\end{align*}

We now telescope the inequality \eqref{eq:AGM_per_iter} for~$k$ from~$0$ to~$N-1$ and take~$x = x_*$:
\begin{align}
    A_{N} f(x_{N})\leq & A_{N}f(x_*) + D_{\phi}[u_0](x_*) - (1 + A_N(\mu + m)) D_{\phi}[u_N](x_*)\notag  \\
    & \hspace{2em}+ 2\sum_{k=0}^{N-1}A_{k+1}\delta_k + \sum_{k=0}^{N-1}\widetilde{\delta}_k. \label{Th:str_conv_adap:proof:ineq_1}
\end{align}
Since~$V[u_{k+1}](x_*) \geq 0$ for all~$k \geq 0$, we have
\begin{align*}
	&A_{N} f(x_{N}) - A_{N}f(x_*) \leq D_{\phi}[u_0](x_*)  + 2\sum_{k=0}^{N-1}A_{k+1}\delta_k + \sum_{k=0}^{N-1}\widetilde{\delta}_k.
\end{align*}
The last inequality proves \eqref{Th:fast_str_conv_adap:result_1}.
Inequality \eqref{Th:fast_str_conv_adap:result_2} is a straightforward from \eqref{Th:str_conv_adap:proof:ineq_1} since~$f(x) \geq f(x_*)$ for all~$x \in Q$.
\end{proof}

To finish the analysis of Algorithm \ref{alg:FastAlg2_strong} we estimate the growth rate of the sequence~$A_N$. The result is proved in the same way as Lemma 3.7 in~\cite{stonyakin2021inexact} with the change~$L_{k} \to M_{k}$.
\begin{lemma}
\label{lemma:a_n_sequence}
For all~$N\geq 0$, we have
\begin{align*}
A_N &\geq \max\left\{\frac{1}{4}\left(\sum_{k=0}^{N-1}\frac{1}{\sqrt{M_{k+1}}}\right)^2, \frac{1}{M_1}\prod_{k=1}^{N-1}\left(1 + \sqrt{\frac{\mu + m}{4M_{k+1}}}\right)^2\right\}\\
& \geq \max\left\{\frac{N^2}{4\widetilde{M}_N}, \frac{1}{M_1}\exp\left(N\sqrt{\frac{\mu + m}{4\widetilde{M}_{N}}}\right)\right\},
\end{align*}
where~$\widetilde{M}_N^{-1/2} = \frac{1}{N}\sum_{k=0}^{N-1}M_{k+1}^{-1/2}$.
\end{lemma}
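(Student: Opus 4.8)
The plan is to extract everything from the single scalar identity obtained by substituting $\alpha_{k+1}=A_{k+1}-A_k$ into \eqref{alpha_def_strong}, namely
\[
A_{k+1}\bigl(1+A_k(\mu+m)\bigr)=M_{k+1}(A_{k+1}-A_k)^2 .
\]
First I would record the structural facts used throughout: for $k=0$ this reads $M_1\alpha_1^2=\alpha_1$, whose largest root is $\alpha_1=1/M_1$, so $A_1=1/M_1>0$; and for $k\ge 1$ the quadratic in $\alpha_{k+1}$ coming from the identity has a nonpositive and a positive root (product of roots $=-A_k(1+A_k(\mu+m))/M_{k+1}\le 0$), so $\alpha_{k+1}>0$ and $\{A_k\}_{k\ge1}$ is strictly increasing. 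The bound \eqref{eq:M_bound}, $M_{k+1}\le 2LG$, is the only control needed on the $M_k$'s.

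For the first term of the maximum, drop the nonnegative quantity $A_k(\mu+m)$ to get $A_{k+1}\le M_{k+1}(A_{k+1}-A_k)^2$, i.e. $\sqrt{A_{k+1}}\le \sqrt{M_{k+1}}\,(A_{k+1}-A_k)$. Factoring $A_{k+1}-A_k=(\sqrt{A_{k+1}}-\sqrt{A_k})(\sqrt{A_{k+1}}+\sqrt{A_k})\le 2\sqrt{A_{k+1}}\,(\sqrt{A_{k+1}}-\sqrt{A_k})$ yields the telescoping estimate $\sqrt{A_{k+1}}-\sqrt{A_k}\ge \tfrac{1}{2}M_{k+1}^{-1/2}$. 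Summing over $k=0,\dots,N-1$ with $A_0=0$ gives $\sqrt{A_N}\ge \tfrac12\sum_{k=0}^{N-1}M_{k+1}^{-1/2}$, and squaring gives the first lower bound; substituting the definition $\widetilde M_N^{-1/2}=\tfrac1N\sum_{k=0}^{N-1}M_{k+1}^{-1/2}$ turns $\tfrac14\bigl(\sum_{k=0}^{N-1}M_{k+1}^{-1/2}\bigr)^2$ into $N^2/(4\widetilde M_N)$, which is an identity rather than a relaxation.

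For the second term, keep only the consequence $M_{k+1}(A_{k+1}-A_k)^2\ge A_{k+1}A_k(\mu+m)$, divide by $A_k^2$, and put $r_k:=\sqrt{A_{k+1}/A_k}\ge 1$; this gives $r_k^2-\sqrt{(\mu+m)/M_{k+1}}\,r_k-1\ge 0$, so $r_k$ is at least the positive root of that quadratic, which in turn is at least $1+\tfrac12\sqrt{(\mu+m)/M_{k+1}}=1+\sqrt{(\mu+m)/(4M_{k+1})}$ (since $\sqrt{b^2+4}\ge 2$). Hence $A_{k+1}/A_k\ge \bigl(1+\sqrt{(\mu+m)/(4M_{k+1})}\bigr)^2$; telescoping this for $k=1,\dots,N-1$ and using $A_1=1/M_1$ produces the product lower bound. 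The clean exponential form then follows from the elementary estimate $2\ln(1+t)\ge t$ on the range to which adaptivity confines $t=\sqrt{(\mu+m)/(4M_{k+1})}$, combined with concavity of $s\mapsto s^{-1/2}$ (Jensen) and the definition of $\widetilde M_N$ to pass from $\sum_k\sqrt{(\mu+m)/(4M_{k+1})}$ to $N\sqrt{(\mu+m)/(4\widetilde M_N)}$. Taking the maximum of the two bounds completes the proof; this reproduces the argument of Lemma 3.7 in~\cite{stonyakin2021inexact} with $L_k$ replaced by $M_k$.

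I expect no serious obstacle. The only mildly delicate points are (i) reading off $A_1=1/M_1$ and the monotonicity of $\{A_k\}$ correctly from the defining quadratic, and (ii) the index bookkeeping in the last step, where one must check that the single $1/M_1$ prefactor and the summation range in $\widetilde M_N$ line up with the telescoped product; the remainder is two applications of $a^2-b^2=(a-b)(a+b)$ and a telescoping sum.
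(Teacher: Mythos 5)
Your route coincides with the paper's: the paper gives no standalone argument and simply invokes Lemma~3.7 of \cite{stonyakin2021inexact} with $L_k\to M_k$, and your reconstruction is exactly that proof. The two telescoping chains are correct and complete as you state them: from $A_{k+1}\le M_{k+1}(A_{k+1}-A_k)^2$ you get $\sqrt{A_{k+1}}-\sqrt{A_k}\ge \tfrac12 M_{k+1}^{-1/2}$ and hence the first bound (and $\tfrac14\bigl(\sum_{k=0}^{N-1}M_{k+1}^{-1/2}\bigr)^2=N^2/(4\widetilde M_N)$ is indeed an identity, so no Jensen argument is needed anywhere); from $M_{k+1}(A_{k+1}-A_k)^2\ge (\mu+m)A_kA_{k+1}$ with $A_1=1/M_1$ you get the product bound, and your quadratic-root estimate $\frac{b+\sqrt{b^2+4}}{2}\ge 1+\frac b2$ is fine.

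The one step that does not close as you describe it is the passage from the product to the exponential. First, $(1+t)^2\ge e^{t}$ only holds for $t$ up to roughly $2.5$, and the claim that ``adaptivity confines'' $t_k=\sqrt{(\mu+m)/(4M_{k+1})}$ to such a range is unsupported: the backtracking only yields the upper bound \eqref{eq:M_bound}, $M_{k+1}\le 2LG$, which pushes $t_k$ up rather than down, and nothing in Algorithm~\ref{alg:FastAlg2_strong} prevents $M_{k+1}$ from dropping below $\mu+m$; what is actually needed is the (implicit, and standard) assumption $\mu+m\le M_{k+1}$, i.e.\ $t_k\le 1/2$. Second, the exponent in the statement equals $\sum_{k=0}^{N-1}t_k$ by the definition of $\widetilde M_N$, while your product only covers $k=1,\dots,N-1$, so after applying $2\ln(1+t_k)\ge t_k$ you are still missing the factor $e^{t_0}$. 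This is precisely the index bookkeeping you flagged, and it cannot be made to ``line up'' exactly: at $N=1$ the claimed exponential bound reads $A_1=1/M_1\ge e^{t_0}/M_1$, which fails whenever $\mu+m>0$. So this off-by-one sits in the statement itself (inherited from Lemma~3.7 of \cite{stonyakin2021inexact}) and is harmless for how the lemma is used (it costs a bounded factor, e.g.\ $e^{t_0}\le e^{1/2}$ under $t_k\le 1/2$, or one replaces $N$ by $N-1$ in the exponent), but a complete write-up should state this adjustment rather than present the last inequality as following verbatim from $2\ln(1+t)\ge t$.
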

Note that from \eqref{eq:M_bound} we have that~$\widetilde{M}_N^{-1/2} = \frac{1}{N}\sum_{k=0}^{N-1}M_{k+1}^{-1/2} \geq \frac{1}{\sqrt{2LG}}$, which leads to the following estimate for the convergence rate of Algorithm \ref{alg:FastAlg2_strong}
\begin{align*}
f(x_N) - f(x_*) \leq &D_{\phi}[u_0](x_*) \min\left\{\frac{8LG}{N^2}, 2LG\exp\left(-N\sqrt{\frac{\mu + m}{8LG}}\right)\right\} \\
& + \frac{2\sum_{k=0}^{N-1}A_{k+1}\delta_k}{A_N} + \frac{\sum_{k=0}^{N-1}\widetilde{\delta}_k}{A_N}.
\end{align*}

\section{Inexact Statistically Preconditioned Accelerated Gradient Method}\label{sec:problem}

In this section, we return to the distributed empirical risk minimization problem~\eqref{eq:ERM1}, \eqref{eq:ERM2}, where we deal with~$m$ machines or worker nodes, with sample size~$n$ at each. Moreover, without loss of generality we index the central node as node~$1$. Following the same algorithmic structure as DANE~\cite{pmlr-v32-shamir14} and SPAG~\cite{pmlr-v119-hendrikx20a}, we define a reference function 
\begin{equation}
    \label{eq:phi_def}
    \phi(x) = \frac{1}{n}\sum_{i=1}^{n} \ell(x;\mathbf{\zeta}_{i}) + \frac{\sigma}{2}\|x\|_2^2,
\end{equation}
where the samples~$\mathbf{\zeta}_{i}$ are taken from the node which is chosen to be central. 
It is easy to see from \eqref{eq:ERM1_1} and \eqref{eq:rel_str_conv_simple} that ~$\phi(x)$ is~$L_{\phi}$-smooth, and~$\mu_{\phi}$-strongly convex since it has a similar form as~$F(x)$. The value of the parameter~$\sigma$ is set to be an upper bound that quantifies how similar the function~$F_1$ is to~$F$, i.e., we assume that with high probability, it holds that
\begin{equation}\label{eq:stat}
    \|\nabla^2 F(x) - \nabla^2 F_1(x)\|_2\le \sigma, \; \forall x \in {\rm dom } h
\end{equation}
where the norm is the operator norm for
matrices (i.e., the largest singular value).
The rationale behind this statistical similarity assumption are statistical arguments  that allow to show~\cite{pmlr-v119-hendrikx20a} that \eqref{eq:stat} holds with~$\sigma$ proportional to~$\frac{1}{\sqrt{n}}$.
Further, it follows that~$F(x)$ is~$L_{F/\phi}$-relatively smooth and~$\mu_{F/\phi}$-relatively strongly convex with respect to~$\phi(x)$~\cite{pmlr-v37-zhangb15,pmlr-v119-hendrikx20a}, i.e., 
\begin{align}\label{eq:relative_phi}
   \mu_{F/\phi} D_\phi[x](y)\leq D_F[x](y) \leq L_{F/\phi}  D_\phi[x](y),
\end{align}
with~$L_{F/\phi}=1$,~$\mu_{F/\phi}={\mu_F}/({\mu_F+2\sigma})$, and~$\kappa_{F/\phi} = {L_{F/\phi}}/{\mu_{F/\phi}}=1+2\sigma/\mu_F$.

Once the specific Bregman divergence has been defined based on statistical similarity and using the reference function (statistical preconditioner)~$\phi(x)$ as in \eqref{eq:phi_def}, distributed statistical preconditioning methods rely on Bregman proximal steps, where the algorithm needs to solve at every iteration the problem of the form (here~$\alpha >0$)
\begin{align}\label{eq:GDRS}
    {\argmin_{x\in\mathbb{R}^d}} \left\{\alpha(\langle\nabla F(z), x {-} z\rangle + h(x)) {+}  D_\phi[u](x)\right\}. 
\end{align}

Non-accelerated methods based on steps of the form~\eqref{eq:GDRS} have an iteration complexity of~$\widetilde{O}(\kappa_{F/\phi})$~\cite{bauschke2017descent,lu2018relatively,stonyakin2021inexact}.
Thus, statistical preconditioning allows for the relative condition number~$\kappa_{F/\phi}$ to determine the convergence rate instead of~$\kappa_{F}$. The authors in~\cite{pmlr-v119-hendrikx20a} showed that for quadratic functions~$\sigma= \widetilde{O}(L_F/\sqrt{n})$, which implies~$\kappa_{F/\phi}=1+\widetilde{O}(\kappa_F/\sqrt{n})$. Similarly, for non-quadratic functions~$\sigma= \widetilde{O}(\kappa_F\sqrt{d/n})$, thus~$\kappa_{F/\phi}=1+\widetilde{O}(\kappa_F\sqrt{d/n})$.
This, in turn, leads to the total number of communication rounds~$\widetilde{O}\left(\kappa_{F/\phi}\right)$, which is quantitatively better than for methods that do not use such statistical preconditioning~\cite{arjevani,scaman2017optimal,hendrikx2020optimal}. A similar argument follows for  accelerated algorithms, where the iteration complexity will be~$\widetilde{O}\big(\kappa_{F/\phi}^{1/2}\big)$~\cite{pmlr-v119-hendrikx20a}. 

Next, we study the building blocks of our approach to advance this line of works. First, we consider the inexact version of the SPAG algorithm~\cite{pmlr-v119-hendrikx20a} wherein each iteration subproblems of the form \eqref{eq:GDRS} are solved inexactly with such accuracy that the overall performance of the algorithm is affected only by a logarithmic factor. Notably, the required accuracy decreases as iterations go, meaning that the approximate solution's quality may not be high in the first iterations. Next, we introduce and analyze a Hyperfast second-order method for third-order smooth and uniformly convex functions, which we will apply to solve subproblems \eqref{eq:GDRS} in each iteration of our inexact SPAG (InSPAG) algorithm when~$h(x)=0$. Finally, we analyze the total complexity for the combination of InSPAG plus the Hyperfast second-order method to solve our problem of interest. This combination is advantageous because we only use first-order information on the individual losses from the whole dataset and obtain a small subproblem on the central node. Then, a fast second-order method is used to solve this subproblem on the central node.

\subsection{InSPAG and Its Convergence Rate Theorem}\label{sec:method}

\begin{algorithm}[h!]
\caption{InSPAG~$(L_{F / \phi}, \mu_{F/\phi}, x_0, R)$}
\label{algo:inspag}
\begin{algorithmic}[1]
\STATE \textbf{Input:}~$R$ s.t.~$x_* \in B_2(0,R)$,~$R_\phi^2 = 2L_{\phi}R^2$,~$\mu_{F/\phi}$,~$M_{0}$. 
\STATE Set
$y_0 = u_0= x_0 \in B_2(0,R)$,~$A_0:= \alpha_0 := 0$.
\FOR{$k \geq 0$ }
\STATE Set~$i_k=0$
\REPEAT
\STATE \textbf{At the central node} set~$M_{k+1} = 2^{i_k-1}M_k$ and find~$\alpha_{k+1}$ from~$
A_{k+1}{(1 + A_k \mu_{F/\phi})}=M_{k+1}\alpha^2_{k+1}$. Set~$A_{k+1} := A_k + \alpha_{k+1}$.
\STATE \textbf{At the central node} set~$y_{k+1} := \frac{\alpha_{k+1}u_k + A_k x_k}{A_{k+1}}$ and send to each worker.
\STATE \textbf{At every worker node~$j$} compute~$\frac{1}{n}\sum_{i=1}^n {\nabla} \ell\bigl(y_{k+1}; \zeta^{(j)}_i\bigr)$ and send it to the central node.
\STATE \textbf{At the central node} compute
$\nabla F(y_{k+1}) = \frac{1}{nm}\sum_{j=1}^m\sum_{i=1}^n {\nabla} \ell\bigl(y_{k+1}; \zeta^{(j)}_i\bigr)$.
\STATE \label{step:subproblem_SPAG} \textbf{At the central node} solve~$u_{k+1} = \arg \min_{x \in B_2(0,R)}^{R_\phi^2/k}\Phi_{k+1}(x)$, 
\begin{align}
\text{where }\; & \Phi_{k+1}(x)  = \alpha_{k+1} (\langle \nabla F(y_{k+1}), x - y_{k+1}\rangle + h(x)) + \nonumber \\
& \qquad + {(1 + A_k\mu_{F/\phi})} D_\phi[u_k](x)  +\alpha_{k+1} \mu_{F/\phi}  D_\phi[y_{k+1}](x). \label{eq:V_t_min_step}
\end{align}
\vspace{-2em}
\STATE \textbf{At the central node} set~$x_{k+1} := \frac{\alpha_{k+1}u_{k+1} + A_k x_k}{A_{k+1}}$.
\STATE Set~$i_k = i_k+1$.
\UNTIL{
\begin{equation}
\label{eq:inspag_crit}
F(x_{k+1}) \leq F(y_{k+1}) + \langle \nabla F(y_{k+1}), x_{k+1} - y_{k+1} \rangle +\frac{M_{k+1}\alpha^2_{k+1}}{A_{k+1}^2}D_{\phi}[u_{k}](u_{k+1}).
\end{equation}
}
\ENDFOR
\STATE \textbf{Ouput:}~$x_k$
\end{algorithmic}
\end{algorithm}

This subsection introduces the InSPAG algorithm together with its convergence rate analysis. The main idea is to implement Algorithm \ref{alg:FastAlg2_strong} on the central node and use Theorem \ref{Th:fast_str_conv_adap}. Inexactness in statistically preconditioned problems has been studied for DANE, resulting in InexactDANE, AIDE~\cite{reddi2016aide}, and D$^2$ANE~\cite{JMLR:v21:19-764}. 
To propose our InSPAG algorithm we rely on the results of Section \ref{S:AGM}. From \eqref{eq:relative_phi} and Examples \ref{Ex:rel_smooth}  and \ref{Ex:composite} we see that~$f_{\delta}(y) = f(y)$ and~$\psi_\delta(x,y) = \langle \nabla F(y), x - y \rangle + h(x) - h(y)$ constitute a~$(0, L_{F/\phi}, \mu_{F/\phi}, 0, \phi)$-model of the function~$f$ defined in \eqref{eq:ERM1}. Thus, the main idea of InSPAG is to implement Algorithm \ref{alg:FastAlg2_strong} for problem \eqref{eq:ERM1} using distributed computations. We further assume that the solution~$x_{*}$ of the problem \eqref{eq:ERM1} belongs to some Euclidean ball~$B_2(0,R)$, and define~$R_{\phi}^2 = 2 L_{\phi}R^2$. Using this quantity we set the inexactness of the projection in each iteration to be~$\widetilde{\delta}_k = \frac{R_{\phi}^2}{k}$ (cf. \eqref{equmir2DL_strong}).

The pseudocode of the proposed InSPAG algorithm is presented as Algorithm~\ref{algo:inspag}. Unlike~\cite{pmlr-v119-hendrikx20a}, our algorithm is inspired by a similar-triangles type of accelerated methods~\cite{gasnikov2018universal,Nesterov2018,dvurechensky2018computational,dvurechensky2018decentralize,stonyakin2021inexact,dvurechensky2021first-order}, which leads to a slightly simpler algorithm.
Another important difference with~\cite{pmlr-v119-hendrikx20a} is that our algorithm is adaptive simultaneously to the constants~$L_{F/\phi}$ and~$G$ (see Assumption \ref{assum:Bregman}), which may lead to further acceleration in practice since locally, the constant~$L_{F/\phi}G$ can be smaller leading to larger step-sizes.
Note that Line \ref{step:subproblem_SPAG} of Algorithm~\ref{algo:inspag} requires approximate minimization of the auxiliary function~\eqref{eq:V_t_min_step}. First, we present the complexity analysis of Algorithm~\ref{algo:inspag} in Theorem~\ref{Th:InSPAG_conv} assuming the approximate solution to~\eqref{eq:V_t_min_step}. In Subsection~\ref{subsec:inner}, we show the complexity of obtaining said approximate solution efficiently when~$h(x)=0$ using high-order methods. 

We are now in a position to state the main result on InSPAG.
\begin{theorem}\label{Th:InSPAG_conv}
Assume that the function~$F$ in \eqref{eq:ERM1} is~$\mu_{F/\phi}$-strongly convex and~$L_{F/\phi}$-smooth with respect to the function~$\phi$, where~$\phi$ satisfies Assumption \ref{assum:Bregman}.  Moreover, let~$x_k$,~$k \geq 0$ be the sequence generated by Algorithm~\ref{algo:inspag}. Then, after~$K$ iterations it holds that
\begin{align}
&f(x_K) - f(x_*) \leq 
\frac{2L_{\phi}R^2(1+\ln K)}{A_K}, \label{eq:InSPAG_rate1}
\end{align}
Moreover, the value~$A_K$ grows as follows:
\begin{align}
\label{eq:A_k_rate_SPAG}
A_K &\geq \max\left\{\frac{K^2}{4\widetilde{M}_K}, \frac{1}{M_1}\exp\left(K\sqrt{\frac{\mu_{F/\phi}}{4\widetilde{M}_{K}}}\right)\right\},
\end{align}
where~$\widetilde{M}_K^{-1/2} = \frac{1}{K}\sum_{k=0}^{K-1}M_{k+1}^{-1/2}$.
\end{theorem}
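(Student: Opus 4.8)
The plan is to apply Theorem~\ref{Th:fast_str_conv_adap} directly, recognizing that Algorithm~\ref{algo:inspag} is exactly Algorithm~\ref{alg:FastAlg2_strong} instantiated with the $(0, L_{F/\phi}, \mu_{F/\phi}, 0, \phi)$-model $f_\delta(y) = f(y)$, $\psi_\delta(x,y) = \langle \nabla F(y), x-y\rangle + h(x) - h(y)$ coming from Examples~\ref{Ex:rel_smooth} and~\ref{Ex:composite} together with the relative smoothness/strong convexity bounds~\eqref{eq:relative_phi}. First I would check this identification line by line: the exit criterion~\eqref{eq:inspag_crit} coincides with~\eqref{exitLDL_strong} once we substitute $f_{\delta_k} = f$, $\delta_k = 0$, $h(x)=0$ in the inner loop; the update rules for $\alpha_{k+1}$, $A_{k+1}$, $y_{k+1}$, $x_{k+1}$ match term-for-term (with $m=0$); and $\Phi_{k+1}$ in~\eqref{eq:V_t_min_step} matches the one in Algorithm~\ref{alg:FastAlg2_strong} with $\mu = \mu_{F/\phi}$, $m=0$. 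The only genuine discrepancies to reconcile are the inexactness parameters: here $\delta_k = 0$ (the model is exact) and $\widetilde{\delta}_k = R_\phi^2/k$ (the Bregman projection in Line~\ref{step:subproblem_SPAG} is solved only approximately, in the sense of Definition~\ref{Def:solNemirovskiy}).

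With the identification in place, I would invoke~\eqref{Th:fast_str_conv_adap:result_1} with $\delta_k = 0$ to get
\[
f(x_K) - f(x_*) \leq \frac{D_\phi[u_0](x_*)}{A_K} + \frac{\sum_{k=0}^{K-1}\widetilde{\delta}_k}{A_K}.
\]
Then I would bound the two numerator terms. For the first: since $u_0 = x_0 \in B_2(0,R)$ and $x_* \in B_2(0,R)$, the $L_\phi$-smoothness of $\phi$ gives $D_\phi[u_0](x_*) \leq \frac{L_\phi}{2}\|x_* - u_0\|_2^2 \leq \frac{L_\phi}{2}(2R)^2 = 2L_\phi R^2 = R_\phi^2$. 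For the second: $\sum_{k=0}^{K-1}\widetilde{\delta}_k = R_\phi^2 \sum_{k=1}^{K-1}\frac{1}{k} \leq R_\phi^2(1 + \ln K)$ using the standard harmonic-sum bound (and noting the $k=0$ term contributes nothing, or is handled by the convention that $\widetilde{\delta}_0$ is vacuous since $\alpha_0 = A_0 = 0$ makes the first projection trivial). Combining, $f(x_K) - f(x_*) \leq \frac{R_\phi^2 + R_\phi^2(1+\ln K)}{A_K}$; absorbing constants this is $\widetilde O(R_\phi^2 \ln K / A_K)$, and writing $R_\phi^2 = 2L_\phi R^2$ yields~\eqref{eq:InSPAG_rate1} (possibly after adjusting the constant in front of $\ln K$, which I would verify is consistent with the $(1+\ln K)$ form stated). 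The growth bound~\eqref{eq:A_k_rate_SPAG} follows immediately from Lemma~\ref{lemma:a_n_sequence} with $m=0$ and $\mu \to \mu_{F/\phi}$, exactly as in the displayed consequence following Lemma~\ref{lemma:a_n_sequence}.

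The main obstacle is not any hard estimate but rather the careful bookkeeping that the approximate minimization in Line~\ref{step:subproblem_SPAG}, which is stated as $\arg\min^{R_\phi^2/k}$, indeed supplies a point $u_{k+1}$ in the sense of Definition~\ref{Def:solNemirovskiy} with $\widetilde{\delta}_k = R_\phi^2/k$, so that Lemma~\ref{lemma:fast_str_conv} (and hence Theorem~\ref{Th:fast_str_conv_adap}) applies with exactly this $\widetilde{\delta}_k$. I would also need to confirm that Assumption~\ref{assum:Bregman} is what licenses the step from $L_{F/\phi} D_\phi[y_{k+1}](x_{k+1})$ to the $\frac{M_{k+1}\alpha_{k+1}^2}{A_{k+1}^2}D_\phi[u_k](u_{k+1})$ form in the exit test — but this is precisely the correctness-of-the-algorithm argument already carried out inside the proof of Theorem~\ref{Th:fast_str_conv_adap} (using $x_{k+1} - y_{k+1} = \frac{\alpha_{k+1}}{A_{k+1}}(u_{k+1} - u_k)$), so it transfers verbatim. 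A minor point worth a sentence is the indexing at $k=0$: since $\alpha_0 = A_0 = 0$, the first real iteration has $\widetilde{\delta}_0$ multiplying nothing meaningful, which is why the sum effectively starts at $k=1$ and produces $\ln K$ rather than a divergent term.
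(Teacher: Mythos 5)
Your proposal is correct and follows essentially the same route as the paper: identify Algorithm~\ref{algo:inspag} as Algorithm~\ref{alg:FastAlg2_strong} with the $(0,L_{F/\phi},\mu_{F/\phi},0,\phi)$-model, $\delta_k=0$, $\widetilde{\delta}_k=R_\phi^2/k$, invoke \eqref{Th:fast_str_conv_adap:result_1}, bound $D_\phi[u_0](x_*)\le 2L_\phi R^2$ and the harmonic sum, and get \eqref{eq:A_k_rate_SPAG} from Lemma~\ref{lemma:a_n_sequence}. The constant-bookkeeping and $k=0$ indexing points you flag are real but harmless (the paper's own chain absorbs them into the $(1+\ln K)$ factor in exactly the same way), so no further work is needed.
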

\begin{proof}
Clearly, Algorithm \ref{algo:inspag} is a distributed implementation of Algorithm \ref{alg:FastAlg2_strong} with~$\delta_k = 0$,~$k\geq 0$. We only note that for this particular setting with~$f_{\delta}(y) = f(y)$  and~$\psi_\delta(x,y) = \langle \nabla F(y), x - y \rangle + h(x) - h(y)$, inequality  \eqref{exitLDL_strong} becomes
\begin{align*}
F(x_{k+1})+h(x_{k+1}) \leq & F(y_{k+1})+h(y_{k+1}) + \langle \nabla F(y_{k+1}), x_{k+1} - y_{k+1} \rangle \\
&+ h(x_{k+1}) - h(y_{k+1}) +\frac{M_{k+1}\alpha^2_{k+1}}{A_{k+1}^2}D_{\phi}[u_{k}](u_{k+1}),
\end{align*}
which is equivalent to \eqref{eq:inspag_crit}. 
Thus, we can apply Theorem \ref{Th:fast_str_conv_adap}, which gives the following estimate
\begin{align*}
f(x_K) - f(x_*) &\leq \frac{D_{\phi}[u_0](x_*)}{A_K} +  \frac{\sum_{k=0}^{K-1}\widetilde{\delta}_k}{A_K} 
\leq \frac{L_{\phi}(2R)^2}{2A_K} + \frac{1}{A_K}\sum_{k=0}^{K-1}\frac{R_\phi^2}{k} \\
& \leq \frac{R_\phi^2(1+\ln K)}{A_K} = \frac{2L_{\phi}R^2(1+\ln K)}{A_K} 
\end{align*}
The lower bound for~$A_{K}$ follows from Lemma \ref{lemma:a_n_sequence}.
\end{proof}

To apply Theorem \ref{Th:InSPAG_conv} we need to ensure that Assumption \ref{assum:Bregman} is satisfied.  
\begin{lemma}
\label{Lm:check_Bregman_assumpt}
Under the assumption that~$\phi$ is~$\mu_\phi$-strongly convex and~$L_\phi$-smooth Assumption \ref{assum:Bregman} is satisfied with~$G=L_\phi/\mu_\phi = \kappa_{\phi}$.
\end{lemma}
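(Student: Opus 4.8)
The plan is to verify Assumption \ref{assum:Bregman} directly by sandwiching the Bregman divergence $D_\phi$ between multiples of the squared Euclidean norm, using the two-sided bound on $\nabla^2\phi$. First I would recall that $\mu_\phi$-strong convexity and $L_\phi$-smoothness of $\phi$ translate (via the integral form of Taylor's theorem for $C^2$ functions, or directly from the standard equivalent characterizations) into
\begin{equation*}
\frac{\mu_\phi}{2}\|x-y\|_2^2 \;\leq\; D_\phi[y](x) \;\leq\; \frac{L_\phi}{2}\|x-y\|_2^2, \qquad \forall\, x,y \in {\rm ri\,dom}\,\phi .
\end{equation*}
This is the key structural fact; everything else is bookkeeping.

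Next, take any $x,y,u,u_+ \in {\rm ri\,dom}\,\phi$ with $x-y=\tau(u_+-u)$ for some $\tau\in[0,1]$. Applying the upper bound to the pair $(x,y)$ and then the lower bound to the pair $(u_+,u)$ gives
\begin{equation*}
D_\phi[y](x) \;\leq\; \frac{L_\phi}{2}\|x-y\|_2^2 \;=\; \frac{L_\phi}{2}\tau^2\|u_+-u\|_2^2 \;\leq\; \frac{L_\phi}{2}\tau^2 \cdot \frac{2}{\mu_\phi} D_\phi[u](u_+) \;=\; \frac{L_\phi}{\mu_\phi}\,\tau^2\, D_\phi[u](u_+).
\end{equation*}
Hence Assumption \ref{assum:Bregman} holds with $G = L_\phi/\mu_\phi = \kappa_\phi$, and since $L_\phi \geq \mu_\phi$ we indeed have $G \geq 1$ as required. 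That completes the argument.

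There is no real obstacle here; the only points worth a sentence of care are (i) making sure the strong-convexity/smoothness constants of $\phi$ are used on the correct side of the inequality (smoothness to get an upper bound on $D_\phi[y](x)$, strong convexity to get a lower bound on $D_\phi[u](u_+)$ so that dividing by it is legitimate), and (ii) noting that $\phi$ from \eqref{eq:phi_def} is $\mu_\phi$-strongly convex with $\mu_\phi \geq \sigma > 0$ because of the $\frac{\sigma}{2}\|x\|_2^2$ term, so the division by $\mu_\phi$ and the ratio $L_\phi/\mu_\phi$ are well-defined. The mild technical care is simply the two-sided quadratic bound on the Bregman divergence of a $C^2$ function with eigenvalue-bounded Hessian, which is entirely standard.
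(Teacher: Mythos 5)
Your proof is correct and follows essentially the same route as the paper: the two-sided quadratic bound $\frac{\mu_\phi}{2}\|x-y\|_2^2 \leq D_\phi[y](x) \leq \frac{L_\phi}{2}\|x-y\|_2^2$, smoothness applied to the pair $(x,y)$, the substitution $x-y=\tau(u_+-u)$, and strong convexity applied to $(u_+,u)$, yielding $G=\kappa_\phi$. Nothing is missing.
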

\begin{proof}
Since~$\phi$ is~$\mu_\phi$-strongly convex and~$L_\phi$-smooth, we have that
\[
\frac{\mu_\phi}{2}\|x-y\|^2 \leq D_{\phi}[x](y) \leq \frac{L_\phi}{2}\|x-y\|^2, \;\;\forall x,y \in \text{{\rm dom }} \phi.
\]
Thus, for all~$x,y,u,u_{+}$ such that~$x-y=\tau(u_+-u)$ for some~$\tau \in [0,1]$, we have
\begin{align*}
D_{\phi}[y](x) &\leq \frac{L_\phi}{2}\|x -y \|^2 = 
\frac{L_\phi \tau^2}{2 } \|u_{+}-u\|^2 \leq
\frac{L_\phi\tau^2}{\mu_\phi}D_{\phi}[u](u_+).
\end{align*}
\end{proof}

From Lemma~\ref{Lm:check_Bregman_assumpt}, we see that if~$\phi$ is a quadratic function, then,~$G=\kappa_{\phi}$ and by \eqref{eq:M_bound} we have that~$M_{k+1}\leq 2 L_{F/\phi}\kappa_{\phi}$. Then, the number of iterations~$K$ to reach accuracy~$\e$, i.e., the number of communications between the central node and the worker nodes, is bounded as~$O(\sqrt{\kappa_{F/\phi}\kappa_{\phi}}\ln\frac{1}{\e})$. As we see below, for quadratic functions the estimate for~$G$ can be improved to~$G=1$, which gives a better communication complexity~$O(\sqrt{\kappa_{F/\phi}}\ln\frac{1}{\e})$. In the general case, where~$\phi$ is not quadratic, similarly to~\cite{pmlr-v119-hendrikx20a, pmlr-v32-lin14}, we next show that~$M_{k+1} \to L_{F/\phi}$ linearly with rate~$\widetilde{O}(\sqrt{\kappa_{F/\phi}})$. This means that the convergence rate of InSPAG quickly approaches the convergence rate with condition number~$\sqrt{\kappa_{F/\phi}}$.

\begin{lemma}
\label{Lm:M_k_when_Lip_Hess}
Under the assumptions of Theorem \ref{Th:InSPAG_conv} and Lemma \ref{Lm:check_Bregman_assumpt} assume additionally that the Hessian of~$\phi$ is~$H$-Lipschitz-continuous, i.e.
\begin{equation}
    \label{eq:phi_Lip_Hess}
    \|\nabla^2 \phi(x) - \nabla^2 \phi(y) \| \leq H \|x-y\|.
\end{equation}
Then the inequality \eqref{eq:inspag_crit} is satisfied with
\begin{equation}
    \label{eq:M_k_when_Lip_Hess}
    M_{k+1} = L_{F/\phi} \min\left\{\kappa_{\phi},1 + \frac{H d_k}{\mu_{\phi}}\right\} ,
\end{equation}
where~$d_k = \|x_{k+1}- y_{k+1}\| + \|u_k - x_k\| + \|u_k-u_{k+1}\|~$.
\end{lemma}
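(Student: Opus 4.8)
The statement of Lemma~\ref{Lm:M_k_when_Lip_Hess} claims that the adaptive exit criterion \eqref{eq:inspag_crit} holds with the explicit choice of $M_{k+1}$ in \eqref{eq:M_k_when_Lip_Hess}. Since \eqref{eq:inspag_crit} is precisely the instance of \eqref{exitLDL_strong} with $\delta_k=0$ and the model $(f_\delta(y),\psi_\delta(x,y))=(F(y)+h(y),\langle\nabla F(y),x-y\rangle+h(x)-h(y))$, and since by Examples~\ref{Ex:rel_smooth}–\ref{Ex:composite} together with \eqref{eq:relative_phi} the right inequality in the model inequality \eqref{eq:Str_Conv_Model} reads $F(x_{k+1})-F(y_{k+1})-\langle\nabla F(y_{k+1}),x_{k+1}-y_{k+1}\rangle\le L_{F/\phi}D_\phi[y_{k+1}](x_{k+1})$, it suffices to show $L_{F/\phi}D_\phi[y_{k+1}](x_{k+1})\le \frac{M_{k+1}\alpha_{k+1}^2}{A_{k+1}^2}D_\phi[u_k](u_{k+1})$ for the stated $M_{k+1}$. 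The first branch of the $\min$, namely $M_{k+1}=L_{F/\phi}\kappa_\phi$, is already covered: this is exactly the argument in the proof of Theorem~\ref{Th:InSPAG_conv} via Lemma~\ref{Lm:check_Bregman_assumpt} (Assumption~\ref{assum:Bregman} with $G=\kappa_\phi$), applied to $x-y=\tau(u_+-u)$ with $(x,y,u,u_+)=(x_{k+1},y_{k+1},u_k,u_{k+1})$ and $\tau=\alpha_{k+1}/A_{k+1}$, which holds since $x_{k+1}-y_{k+1}=\frac{\alpha_{k+1}}{A_{k+1}}(u_{k+1}-u_k)$ by \eqref{eqxmir2DL_strong}, \eqref{eqymir2DL_strong}. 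So the real content is the second branch.

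\textbf{Main step: the improved bound via Lipschitz Hessian.} The idea is to get a sharper triangle-scaling-type inequality $D_\phi[y_{k+1}](x_{k+1})\le G_k\,\tau^2 D_\phi[u_k](u_{k+1})$ with $G_k=1+Hd_k/\mu_\phi$ instead of the crude $\kappa_\phi$. First I would bound the numerator: by Taylor expansion with integral remainder and the $H$-Lipschitz Hessian of $\phi$,
\[
D_\phi[y_{k+1}](x_{k+1})\le \tfrac12\langle\nabla^2\phi(y_{k+1})(x_{k+1}-y_{k+1}),x_{k+1}-y_{k+1}\rangle+\tfrac{H}{6}\|x_{k+1}-y_{k+1}\|^3.
\]
Writing $x_{k+1}-y_{k+1}=\tau(u_{k+1}-u_k)$ with $\tau=\alpha_{k+1}/A_{k+1}\le 1$ this is $\le \tau^2\big(\tfrac12\langle\nabla^2\phi(y_{k+1})(u_{k+1}-u_k),u_{k+1}-u_k\rangle+\tfrac{H}{6}\|u_{k+1}-u_k\|^3\big)$ using $\tau^3\le\tau^2$. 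For the denominator I would lower-bound $D_\phi[u_k](u_{k+1})\ge \tfrac12\langle\nabla^2\phi(u_k)(u_{k+1}-u_k),u_{k+1}-u_k\rangle-\tfrac{H}{6}\|u_{k+1}-u_k\|^3$ the same way. The ratio is then controlled by comparing $\nabla^2\phi(y_{k+1})$ with $\nabla^2\phi(u_k)$: by the Lipschitz Hessian, $\|\nabla^2\phi(y_{k+1})-\nabla^2\phi(u_k)\|\le H\|y_{k+1}-u_k\|$, and one wants to absorb both this mismatch and the cubic error terms into a factor $1+Hd_k/\mu_\phi$ using $\nabla^2\phi\succeq\mu_\phi I$ in the denominator. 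The quantities $\|x_{k+1}-y_{k+1}\|$, $\|u_k-x_k\|$, $\|u_k-u_{k+1}\|$ appearing in $d_k$ arise because $\|y_{k+1}-u_k\|$ and $\|u_{k+1}-u_k\|$ can be expressed through them via the similar-triangles relations \eqref{eqymir2DL_strong}, \eqref{eqxmir2DL_strong} (e.g.\ $y_{k+1}-u_k=\frac{A_k}{A_{k+1}}(x_k-u_k)$); I'd carry out these substitutions and bound each norm by $d_k$.

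\textbf{Expected obstacle and wrap-up.} The delicate point is the bookkeeping: showing that the cubic remainder terms (scaled by $H$) and the Hessian-mismatch term together contribute at most a multiplicative $Hd_k/\mu_\phi$ on top of the ``$1$'', rather than something like $1+O(Hd_k/\mu_\phi)$ with a constant I cannot control, or a bound that degrades when $\|u_{k+1}-u_k\|$ is large relative to $d_k$. The clean way is: from the Taylor bounds, $D_\phi[y_{k+1}](x_{k+1})\le\tau^2\big(D_\phi[u_k](u_{k+1})+\tfrac{H}{2}(\|y_{k+1}-u_k\|+\tfrac13\|u_{k+1}-u_k\|+\tfrac13\|x_{k+1}-y_{k+1}\|)\|u_{k+1}-u_k\|^2\big)$, then use $D_\phi[u_k](u_{k+1})\ge\tfrac{\mu_\phi}{2}\|u_{k+1}-u_k\|^2$ to convert the additive error into the multiplicative factor $1+\tfrac{H}{\mu_\phi}(\|y_{k+1}-u_k\|+\|u_{k+1}-u_k\|+\|x_{k+1}-y_{k+1}\|)$, and finally bound the parenthetical sum by $d_k$ using $\|y_{k+1}-u_k\|=\tfrac{A_k}{A_{k+1}}\|x_k-u_k\|\le\|x_k-u_k\|$. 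Multiplying through by $L_{F/\phi}$ and recalling $\tau=\alpha_{k+1}/A_{k+1}$ yields exactly $\frac{M_{k+1}\alpha_{k+1}^2}{A_{k+1}^2}D_\phi[u_k](u_{k+1})$ with $M_{k+1}=L_{F/\phi}(1+Hd_k/\mu_\phi)$; taking the minimum with the $G=\kappa_\phi$ bound from Lemma~\ref{Lm:check_Bregman_assumpt} completes the proof.
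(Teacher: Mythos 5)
Your proposal is correct and follows essentially the same route as the paper's proof: reduce \eqref{eq:inspag_crit} via relative smoothness \eqref{eq:relative_phi} to a sharpened triangle-scaling bound $D_\phi[y_{k+1}](x_{k+1})\le \frac{\alpha_{k+1}^2}{A_{k+1}^2}\bigl(1+\tfrac{Hd_k}{\mu_\phi}\bigr)D_\phi[u_k](u_{k+1})$, obtained by comparing the local quadratic forms of the two divergences through the $H$-Lipschitz Hessian, converting the mismatch into a multiplicative factor via $\mu_\phi$-strong convexity, and bounding the relevant distances by $d_k$ through the similar-triangles relations \eqref{eqymir2DL_strong}, \eqref{eqxmir2DL_strong}, then taking the minimum with the $\kappa_\phi$ branch from Lemma \ref{Lm:check_Bregman_assumpt}. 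The only difference is technical bookkeeping: the paper writes each divergence exactly as a quadratic form with the Hessian at a mean-value intermediate point, whereas you use Taylor expansions with explicit cubic remainders; both versions of the error fit inside $d_k$.
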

\begin{proof}
By the local quadratic representation of the Bregman divergence, we have for any~$a,b \in {\rm dom } \, \phi$ and for some~$\tau \in [0,1]$ that
$D_{\phi}[a](b) = \|a-b \|_{\nabla^2\phi(\tau a + (1-\tau)b)}^2$. We use~$H(a,b)$ to denote the corresponding Hessian~$\nabla^2\phi(\tau a + (1-\tau)b)$.
We have 
\begin{align*}
&D_{\phi}[x_{k+1}](y_{k+1}) = \|x_{k+1}-y_{k+1} \|_{H(x_{k+1},y_{k+1})}^2 \stackrel{\eqref{eqxmir2DL_strong},\eqref{eqymir2DL_strong}}{=} \frac{\alpha_{k+1}^2}{A_{k+1}^2} \|u_{k+1}-u_{k} \|_{H(x_{k+1},y_{k+1})}^2  \\
& \leq \frac{\alpha_{k+1}^2}{A_{k+1}^2} \left(\|u_{k+1}-u_{k} \|_{H(u_{k+1},u_{k})}^2 + \|H(x_{k+1},y_{k+1}) - H(u_{k+1},u_{k}) \|\| u_{k+1} -u_{k}\|^2 \right) \\
& \leq \frac{\alpha_{k+1}^2}{A_{k+1}^2} \left(D_{\phi}[u_{k}](u_{k+1}) + \|H(x_{k+1},y_{k+1}) - H(u_{k+1},u_{k}) \| \frac{D_{\phi}[u_{k}](u_{k+1})}{\mu_{\phi}}\right) \\
&\stackrel{\eqref{eq:phi_Lip_Hess}}{\leq} \frac{\alpha_{k+1}^2}{A_{k+1}^2} D_{\phi}[u_{k}](u_{k+1})\left(1 + \frac{H\|z-z'\|}{\mu_{\phi}}\right),
\end{align*}
where~$z \in [x_{k+1},y_{k+1}]$ and~$z'\in [u_{k+1},u_{k}]$. Using the latter and \eqref{eqymir2DL_strong}, \eqref{eqxmir2DL_strong}, we obtain
\begin{align*}
&\|z-z'\| \leq \|z- y_{k+1}\| + \|y_{k+1} - u_k\| + \|u_k-z'\|  \\
& \leq \|x_{k+1}- y_{k+1}\| + \|x_k - u_k\| + \|u_k-u_{k+1}\| \triangleq d_k.
\end{align*}
Combining the above with the relative smoothness property \eqref{eq:relative_phi}, we obtain that \eqref{eq:inspag_crit} holds when~$M_{k+1}=L_{F/\phi}\left(1 + \frac{H d_k}{\mu_{\phi}}\right)$. Since \eqref{eq:inspag_crit} holds also when~$M_{k+1}=L_{F/\phi}\kappa_{\phi}$ (see Lemma \ref{Lm:check_Bregman_assumpt} and \eqref{eq:relative_phi}), we obtain the statement of the Lemma.
\end{proof}

From \eqref{Th:fast_str_conv_adap:result_2} and \eqref{eq:A_k_rate_SPAG} since~$M_{k+1}\leq L_{F/\phi}\kappa_{\phi}$ we know that the sequence~$u_k$,~$k\geq 0$ converges to~$x_*$ linearly with condition number~$\sqrt{\kappa_{F/\phi}\kappa_{\phi}}$. From \eqref{Th:fast_str_conv_adap:result_1} by the strong convexity, we see that the sequence~$x_k$,~$k\geq 0$ converges to~$x_*$ also linearly with the same condition number. Hence, by \eqref{eqymir2DL_strong} we conclude the same on the sequence~$y_k$,~$k\geq 0$. Thus,~$d_k$ converges linearly to zero with the same condition number and~$M_{k+1}$ approaches~$L_{F/\phi}$ with the same rate.
This, in turn, means that the convergence rate in Theorem \ref{Th:InSPAG_conv} quickly approaches~$O((1-\sqrt{\kappa_{F/\phi}})^K)$ when the Hessian of~$\phi$ is Lipschitz-continuous. 

Next, we study the properties of the auxiliary problem in step \ref{step:subproblem_SPAG} of Algorithm \ref{algo:inspag} and, under the additional assumption that the loss function~$\ell$ has bounded fourth-order derivatives, we show the explicit complexity of computing an approximate solution to this auxiliary problem using Hyperfast second-order methods. 

\subsection{Hyperfast Second-Order Method for the Auxiliary Problem}\label{subsec:inner}

In this subsection, we elaborate the properties of the auxiliary problem in step \ref{step:subproblem_SPAG} of Algorithm \ref{algo:inspag} and propose a Hyperfast second-order algorithm to solve it when the function~$\phi$ is strongly convex and sufficiently smooth. The main result is a complexity estimate for solving the auxiliary problem by the Hyperfast algorithm. Recall that, at each iteration of Algorithm~\ref{algo:inspag} we need to find an approximate minimizer in the sense of Definition~\ref{Def:solNemirovskiy} of the function~$\Phi_{k+1}(x)$ on the Euclidean ball~$B_2(0,R)$. 
Throughout this subsection we assume that the regularizer~$h(x) \equiv 0$. 

We first study some properties of the function~$\Phi_{k+1}(x)$ defined in \eqref{eq:V_t_min_step} and the minimization problem solved in 
step \ref{step:subproblem_SPAG} of Algorithm \ref{algo:inspag}. Using our assumption that~$h(x)=0$, the fact that~$A_{k+1}=A_k+\alpha_{k+1}$, the definition of the Bregman divergence, and ignoring constant terms in that problem, we see that it is equivalent to the problem 
$u_{k+1} = \arg\min_{x \in B_2(0,R)}^{R_\phi^2/k} \Psi_{k+1}(x)$,  where
\begin{align}
& \Psi_{k+1}(x)  \triangleq  \langle \alpha_{k+1}\nabla F(y_{k+1}) - (1 + A_k\mu_{F/\phi}) \nabla \phi (u_k) -  \alpha_{k+1} \mu_{F/\phi} \nabla \phi (y_{k+1}), x\rangle  + \nonumber \\
& \qquad + (1 + A_{k+1}\mu_{F/\phi})\phi(x). \label{eq:Psi_min}
\end{align}
\begin{lemma}
\label{Lm:inexact_func_to_Nemir}
Assume that~$\phi$ is~$\mu_\phi$-strongly convex and~$L_\phi$-smooth w.r.t. the Euclidean norm. Also assume that for some~$\theta>0$ and all $x \in B_2(0,R)$, with~$\max\{ \|\nabla F(x)\|_2/\mu_{F/\phi}, \|\nabla \phi(x)\|_2\} \leq \theta$.
Let us denote \\$x_{k+1}^* = \arg \min_{x \in B_2(0,R)} \Psi_{k+1}(x)$ and let the point~$\hat{x}_{k+1}$ satisfy 
\begin{align}
    \label{eq:V_tolerance}
   \hspace{-0.15cm} \Psi_{k+1}(\hat{x}_{k+1}) {-} \Psi_{k+1}(x_{k+1}^*) \leq \Delta_k \triangleq \frac{\mu_\phi R_\phi^4}{2k^2(2L_{\phi}R +3\theta )^2( 1 + A_{k+1}\mu_{F/\phi})}.
\end{align}
Then~$\hat{x}_{k+1} = \arg\min_{x \in B_2(0,R)}^{R_\phi^2/k} \Psi_{k+1}(x)$.
\end{lemma}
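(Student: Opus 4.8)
The plan is to verify the defining inequality of the inexact generalized projection (Definition~\ref{Def:solNemirovskiy}) directly. Since $\phi$ is differentiable, so is $\Psi_{k+1}$ from \eqref{eq:Psi_min}, and therefore $\partial\Psi_{k+1}(\hat{x}_{k+1})=\{\nabla\Psi_{k+1}(\hat{x}_{k+1})\}$; hence it suffices to prove that $\langle\nabla\Psi_{k+1}(\hat{x}_{k+1}),x-\hat{x}_{k+1}\rangle\geq -R_\phi^2/k$ for all $x\in B_2(0,R)$. Write $x^*\triangleq x_{k+1}^*$ and note that $\Psi_{k+1}$ is $\mu_g$-strongly convex and $L_g$-smooth with $\mu_g=(1+A_{k+1}\mu_{F/\phi})\mu_\phi$ and $L_g=(1+A_{k+1}\mu_{F/\phi})L_\phi$, the linear term in \eqref{eq:Psi_min} affecting neither constant.

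First I would convert the function-value tolerance \eqref{eq:V_tolerance} into a distance bound: combining $\mu_g$-strong convexity of $\Psi_{k+1}$ with the first-order optimality condition $\langle\nabla\Psi_{k+1}(x^*),x-x^*\rangle\geq 0$, which holds for all $x\in B_2(0,R)$ since $x^*$ is the constrained minimizer, gives $\tfrac{\mu_g}{2}\|\hat{x}_{k+1}-x^*\|^2\leq \Psi_{k+1}(\hat{x}_{k+1})-\Psi_{k+1}(x^*)\leq \Delta_k$, so $\|\hat{x}_{k+1}-x^*\|\leq\sqrt{2\Delta_k/\mu_g}$.

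Next, for an arbitrary $x\in B_2(0,R)$ I would split $\langle\nabla\Psi_{k+1}(\hat{x}_{k+1}),x-\hat{x}_{k+1}\rangle$ as $\langle\nabla\Psi_{k+1}(x^*),x-x^*\rangle+\langle\nabla\Psi_{k+1}(x^*),x^*-\hat{x}_{k+1}\rangle+\langle\nabla\Psi_{k+1}(\hat{x}_{k+1})-\nabla\Psi_{k+1}(x^*),x-\hat{x}_{k+1}\rangle$, discard the first (nonnegative) term, and estimate the remaining two by Cauchy--Schwarz using $L_g$-smoothness, the distance bound, and $\|x-\hat{x}_{k+1}\|\leq 2R$; this yields $\langle\nabla\Psi_{k+1}(\hat{x}_{k+1}),x-\hat{x}_{k+1}\rangle\geq -\big(\|\nabla\Psi_{k+1}(x^*)\|+2RL_g\big)\sqrt{2\Delta_k/\mu_g}$. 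To bound $\|\nabla\Psi_{k+1}(x^*)\|$ I would use the explicit form \eqref{eq:Psi_min} together with the fact that $y_{k+1},u_k,x^*\in B_2(0,R)$, so that $\|\nabla F\|_2\leq\mu_{F/\phi}\theta$ and $\|\nabla\phi\|_2\leq\theta$ there, and the identity $2\alpha_{k+1}+A_k+A_{k+1}=\alpha_{k+1}+2A_{k+1}\leq 3A_{k+1}$, obtaining $\|\nabla\Psi_{k+1}(x^*)\|\leq(2+3A_{k+1}\mu_{F/\phi})\theta$ and hence $\|\nabla\Psi_{k+1}(x^*)\|+2RL_g\leq(2L_\phi R+3\theta)(1+A_{k+1}\mu_{F/\phi})$. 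Substituting this bound and $\mu_g=(1+A_{k+1}\mu_{F/\phi})\mu_\phi$, and then plugging in the definition of $\Delta_k$, the prefactor $(2L_\phi R+3\theta)\sqrt{1+A_{k+1}\mu_{F/\phi}}$ cancels exactly and what remains is $-R_\phi^2/k$, which is the claim.

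The only slightly fiddly parts are: (i) checking by induction that every iterate $x_k,u_k,y_{k+1}$ of Algorithm~\ref{algo:inspag} stays in $B_2(0,R)$, which is immediate since each is a convex combination of $x_0\in B_2(0,R)$ and points obtained as (inexact) argmins over $B_2(0,R)$, and which is what legitimizes replacing $\|\nabla F\|_2,\|\nabla\phi\|_2$ by $\mu_{F/\phi}\theta$ and $\theta$; and (ii) the constant bookkeeping that makes $\|\nabla\Psi_{k+1}(x^*)\|+2RL_g$ fit under $(2L_\phi R+3\theta)(1+A_{k+1}\mu_{F/\phi})$ and makes the final cancellation land precisely on $-R_\phi^2/k$. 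I do not foresee a real obstacle here; the delicate point is simply matching the constants appearing in $\Delta_k$.
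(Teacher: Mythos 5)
Your proposal is correct and follows essentially the same route as the paper's proof: converting the function-value tolerance into a distance bound via strong convexity, splitting $\langle\nabla\Psi_{k+1}(\hat{x}_{k+1}),x-\hat{x}_{k+1}\rangle$ into the same three terms, and bounding them by smoothness, the gradient bound $3(1+A_{k+1}\mu_{F/\phi})\theta$ on $B_2(0,R)$, and the choice of $\Delta_k$ so that the constants cancel to $-R_\phi^2/k$. The only differences are cosmetic (your slightly tighter intermediate bound $(2+3A_{k+1}\mu_{F/\phi})\theta$ and your explicit remark that the iterates remain in $B_2(0,R)$, which the paper leaves implicit).
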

\begin{proof}
Since~$\phi$ is~$\mu_\phi$-strongly convex and~$L_\phi$-smooth,~$\Psi_{k+1}$ in \eqref{eq:Psi_min} is~$\mu_{\Psi}$-strongly convex with~$\mu_{\Psi}=( 1 + A_{k+1}\mu_{F/\phi} )\mu_\phi$ and~$L_\Psi$-smooth with~$L_{\Psi}=( 1 + A_{k+1}\mu_{F/\phi} )L_\phi$. Further, by the assumption of the lemma, we have, for all~$x \in B_2(0,R)$,
\begin{align}
    &\| \nabla \Psi_{k+1} (x)  \|_2 = \| \alpha_{k+1}\nabla F(y_{k+1}) - (1 + A_k\mu_{F/\phi}) \nabla \phi (u_k) -  \alpha_{k+1} \mu_{F/\phi} \nabla \phi (y_{k+1}) \nonumber\\
    & + (1 + A_{k+1}\mu_{F/\phi}) \nabla \phi(x) \|_2 
    \leq 3(1 + A_{k+1}\mu_{F/\phi}) \theta, \label{eq:Lm:inexact_func_to_Nemir_proof_1}
\end{align}
where we used also that~$\alpha_{k+1}\leq A_{k+1}$ and that~$A_{k+1}=A_k+\alpha_{k+1}$.
By the strong convexity of~$\Psi$, we have
\begin{align}
\|\hat{x}_{k+1} - x_{k+1}^*\|_2\leq   \sqrt{\frac{2}{\mu_{\Psi}}(\Psi_{k+1}(\hat{x}_{k+1}) -\Psi_{k+1}(x_{k+1}^*))} \leq \sqrt{2\Delta_k/\mu_\Psi}.
    \label{eq:Lm:inexact_func_to_Nemir_proof_2}
\end{align}
Hence, for any~$x \in B_2(0,R)$,
\begin{align*}
    &\la \nabla \Psi_{k+1}(\hat{x}_{k+1}), x - \hat{x}_{k+1}\ra = \la \nabla \Psi_{k+1}(\hat{x}_{k+1}) - \nabla \Psi_{k+1}(x_{k+1}^*), x - \hat{x}_{k+1}\ra \\
    &+\la \nabla \Psi_{k+1}(x_{k+1}^*), x - x_{k+1}^* \ra + \la \nabla \Psi_{k+1}(x_{k+1}^*), x_{k+1}^* - \hat{x}_{k+1} \ra \notag \\
    &\geq - L_{\Psi} \|x_{k+1}^*- \hat{x}_{k+1}\|_2\|x - \hat{x}_{k+1}\|_2 + 0 - \|\nabla \Psi_{k+1}(x_{k+1}^*)\|_2 \|x_{k+1}^* - \hat{x}_{k+1}\|_2 \\
    &\stackrel{\eqref{eq:Lm:inexact_func_to_Nemir_proof_1},\eqref{eq:Lm:inexact_func_to_Nemir_proof_2}}{\geq} - (2L_{\Psi}R+3(1 + A_{k+1}\mu_{F/\phi}) \theta) \sqrt{2\Delta_k/\mu_\Psi} \\
    & = -(1 + A_{k+1}\mu_{F/\phi})(2L_{\phi}R+3\theta)  \sqrt{\frac{2\Delta_k}{( 1 + A_{k+1}\mu_{F/\phi} )\mu_\phi}} \geq - R_\phi^2/k
\end{align*}
where we used the definitions of~$L_{\Psi}$ and~$\mu_{\Psi}$ and the expression for~$\Delta_k$. Thus,~$\hat{x}_{k+1}$ satisfies Definition \ref{Def:solNemirovskiy} with~$\widetilde{\delta} = R_\phi^2/k$.
\end{proof}

Next, we propose an efficient Hyperfast second-order method to obtain a point~$\hat{x}_{k+1}$ for which~\eqref{eq:V_tolerance} holds. To do this, we make an additional assumption on the function~$\phi$.
\begin{assumption}\label{assum:high}
The function~$\phi$ has bounded fourth-order derivatives, which is equivalent to Lipschitz third-order derivative, i.e. there exists~$0\leq L_{\phi,3} < \infty$ s.t.
\begin{align*}
    \|\nabla^3\phi(x)-\nabla^3\phi(y) \|_2 \leq  L_{\phi,3} \|x-y\|_2, \;\; \forall x,y \in B_2(0,R),
\end{align*}
where the  norm of a tensor is induced by the Euclidean norm in a standard way~\cite{nesterov2019implementable}.
\end{assumption}

The idea is to use a second-order implementation of a third-order method, in the sense of~Sect. 5.2 from \cite{nesterov2020inexact} or~ Algorithm 2 \cite{kamzolov2020near}, to minimize~$\Psi_{k+1}(x)$ in each iteration of InSPAG. Such methods are called Hyperfast second-order methods since, due to the additional assumption of third-order smoothness, they have faster convergence rates than the optimal second-order method~\cite{monteiro2013accelerated}.
In our case, the objective~$\Psi_{k+1}(x)$ is additionally strongly convex. Thus, we can achieve faster rates than the basic schemes in~\cite{nesterov2020inexact,kamzolov2020near} that do not use strong convexity. We propose an extension of
Hyperfast second-order methods for minimizing strongly convex functions and show that they have faster convergence rate.\footnote{Section~\ref{app:uni} extends  Hyperfast second-order methods for a more general setting of minimizing uniformly convex functions. Here we use a particular case that corresponds to uniform convexity of the order~$q=2$, equivalent to strong convexity.} 
Our algorithm is described below as Algorithm \ref{alg:rest-Hyp-dist-V}. 

\begin{algorithm}
\caption{Restarted Hyperfast Second-Order Method}
\label{alg:rest-Hyp-dist-V}
\begin{algorithmic}[1]
   \REQUIRE~$z_0 \in B_2(0,R)$, constant~$c$ which defines convergence rate of the basic Hyperfast method, strong convexity parameter~$\mu_{\phi}$.
   \STATE Set~$R_0=2R$
   \FOR{$t=0,1,...$}
			\STATE Set ~$R_t = R_0 \cdot 2^{-k}$, and 
			$
			     N_t = \max \{ \lceil \big({8 c L_{\phi,3} R_t^{2}}/{\mu_\phi} \big)^\frac{1}{5} \rceil, 1 \},
			$
			
			\STATE Set~$z_{t+1}=y_{N_t}$ as the output of the basic Hyperfast Second-Order Method (either~Eq.3.6 \cite{nesterov2020inexact} for~$p=3$ and~$\beta=1/2$ and with auxiliary steps described in~Sect. 5.2 from \cite{nesterov2020inexact} or~Algorithm 2 from \cite{kamzolov2020near}) started from~$z_t$ and run for~$N_t$ steps applied to~$\Psi_{k+1}(x)$.
			\STATE Set~$t=t+1$.
			\ENDFOR	
		\ENSURE~$z_t$.
\end{algorithmic}
\end{algorithm}

As a building block, this method uses basic Hyperfast second-order method which has  convergence rate of the form~${cL_3\|x_*-z_0\|_2^4}/{k^5}$, where~$k$ is the iteration counter,~$c = 48$ for~Theorem 2 from \cite{nesterov2020inexact} and~$c = 35$ for~Theorem 2 from \cite{kamzolov2020near}.

\begin{theorem}
\label{Th:Hyper_V_compl}
Under assumptions of Lemma \ref{Lm:inexact_func_to_Nemir} let additionally Assumption \ref{assum:high} to hold. Let also sequence~$z_t$,~$t \geq 0$ be generated by Algorithm \ref{alg:rest-Hyp-dist-V}. Then 
\begin{equation}
\label{eq:Hyper_V_rate}
    \frac{\mu_\Psi}{2} \|z_{t} - x_{k+1}^*\|_2^2 \le \Psi_{k+1}(z_{t}) - \Psi_{k+1}(x_{k+1}^*) \le  2\mu_\Psi R^2 \cdot 2^{-2t}, t\geq 0.
\end{equation}
Moreover, the total number of steps of the basic Hyperfast second-order method to reach~$\Psi_{k+1}(z_{t}) - \Psi_{k+1}(x_{k+1}^*) \leq \Delta_k$ is bounded by 
\[
5\bigg( \frac{32 c L_{\phi,3} R^{2}}{\mu_{\phi}} \bigg)^\frac{1}{5} +\log_2\frac{( 1 + A_{k+1}\mu_{F/\phi})^2k^2(2L_{\phi}R +3\theta )^2 }{L_{\phi}^2 R^2}.
\]
\end{theorem}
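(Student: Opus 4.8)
The plan is to run a textbook restart argument on top of the basic Hyperfast second-order method, exploiting that on $B_2(0,R)$ the function $\Psi_{k+1}$ from~\eqref{eq:Psi_min} is $\mu_{\Psi}$-strongly convex with $\mu_{\Psi}=(1+A_{k+1}\mu_{F/\phi})\mu_\phi$, $L_{\Psi}$-smooth with $L_{\Psi}=(1+A_{k+1}\mu_{F/\phi})L_\phi$ (Lemma~\ref{Lm:inexact_func_to_Nemir}), and, since $\nabla^3\Psi_{k+1}=(1+A_{k+1}\mu_{F/\phi})\nabla^3\phi$, has $L_{\Psi,3}$-Lipschitz third derivative with $L_{\Psi,3}=(1+A_{k+1}\mu_{F/\phi})L_{\phi,3}$ by Assumption~\ref{assum:high}. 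The key observation is that the ratio $L_{\Psi,3}/\mu_{\Psi}=L_{\phi,3}/\mu_\phi$ does not depend on the outer iteration counter $k$, which is precisely why the restart lengths $N_t$ in Algorithm~\ref{alg:rest-Hyp-dist-V} can be written purely in terms of $\phi$. The left inequality in~\eqref{eq:Hyper_V_rate} is then immediate from $\mu_{\Psi}$-strong convexity of $\Psi_{k+1}$.

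For the right inequality I would prove by induction on $t$ the distance bound $\|z_t-x_{k+1}^*\|_2\le R_t:=2R\cdot 2^{-t}$. The base case is just $z_0,x_{k+1}^*\in B_2(0,R)$, so $\|z_0-x_{k+1}^*\|_2\le 2R=R_0$. For the inductive step, the basic Hyperfast method run for $N_t$ steps from $z_t$ applied to $\Psi_{k+1}$ gives $\Psi_{k+1}(z_{t+1})-\Psi_{k+1}(x_{k+1}^*)\le cL_{\Psi,3}\|z_t-x_{k+1}^*\|_2^4/N_t^5\le cL_{\Psi,3}R_t^4/N_t^5$; combining with $\mu_{\Psi}$-strong convexity and the choice $N_t^5\ge 8cL_{\Psi,3}R_t^2/\mu_{\Psi}$ (valid because $N_t=\lceil(8cL_{\phi,3}R_t^2/\mu_\phi)^{1/5}\rceil$ and $L_{\Psi,3}/\mu_{\Psi}=L_{\phi,3}/\mu_\phi$) yields $\|z_{t+1}-x_{k+1}^*\|_2^2\le R_t^2/4=R_{t+1}^2$. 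The function-value bound in~\eqref{eq:Hyper_V_rate} follows: for $t\ge 1$, substituting $N_{t-1}$ into the basic rate gives $\Psi_{k+1}(z_t)-\Psi_{k+1}(x_{k+1}^*)\le \mu_{\Psi}R_{t-1}^2/8=2\mu_{\Psi}R^2\cdot 2^{-2t}$, and for $t=0$ one uses $L_{\Psi}$-smoothness together with $\|z_0-x_{k+1}^*\|_2\le 2R$.

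The complexity count is then a geometric-series computation. The target accuracy $\Psi_{k+1}(z_t)-\Psi_{k+1}(x_{k+1}^*)\le\Delta_k$ is guaranteed once $2\mu_{\Psi}R^2\cdot 2^{-2t}\le\Delta_k$, i.e.\ after $T=\lceil\tfrac12\log_2(2\mu_{\Psi}R^2/\Delta_k)\rceil$ restarts. The total number of basic steps is $\sum_{t=0}^{T-1}N_t$; using $N_t\le(8cL_{\phi,3}R_t^2/\mu_\phi)^{1/5}+1=(32cL_{\phi,3}R^2/\mu_\phi)^{1/5}2^{-2t/5}+1$ and $\sum_{t\ge 0}2^{-2t/5}=(1-2^{-2/5})^{-1}<5$, this is at most $5(32cL_{\phi,3}R^2/\mu_\phi)^{1/5}+T$. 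Finally substituting $\mu_{\Psi}=(1+A_{k+1}\mu_{F/\phi})\mu_\phi$, the expression for $\Delta_k$ from~\eqref{eq:V_tolerance}, and $R_\phi^2=2L_\phi R^2$ gives $2\mu_{\Psi}R^2/\Delta_k=(1+A_{k+1}\mu_{F/\phi})^2k^2(2L_\phi R+3\theta)^2/(L_\phi^2R^2)$, whose logarithm (being at least $\log_2 4>0$) dominates $2T$ and hence $T$, producing the stated bound.

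The main obstacle is the bookkeeping that makes every restart contract the distance by a fixed factor $1/2$: this rests entirely on the calibration $N_t^5\asymp L_{\Psi,3}R_t^2/\mu_{\Psi}$ and on the observation that $L_{\Psi,3}/\mu_{\Psi}$ is independent of $k$, so I would be careful to verify these two points first. A secondary subtlety is that the basic Hyperfast guarantee must be invoked in its initial-distance form $cL_3\|x_*-z_0\|_2^4/N^5$ rather than in an initial-function-gap form, since each restart is seeded only by the distance estimate inherited from the previous stage; and one must track the ceilings in $N_t$, which contribute the harmless additive $T$ that is absorbed into the logarithmic term.
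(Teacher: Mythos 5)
Your proposal follows essentially the same route as the paper's proof: induction on the distance bound $R_t=2R\cdot 2^{-t}$ via the basic Hyperfast rate $cL_{\Psi,3}\|z_t-x^*\|_2^4/N_t^5$, the key identity $L_{\Psi,3}/\mu_\Psi=L_{\phi,3}/\mu_\phi$ justifying the choice of $N_t$, the geometric sum $\sum_t 2^{-2t/5}<5$, and the substitution of $\Delta_k$ into the logarithmic term. The only (immaterial) deviation is your $t=0$ endpoint of \eqref{eq:Hyper_V_rate}: $L_\Psi$-smoothness yields $2L_\Psi R^2$ rather than $2\mu_\Psi R^2$, a corner case the paper's own induction also glosses over and which plays no role in the complexity count, since the restart count only uses the bound for $t\geq 1$.
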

\begin{proof}
Let us denote for shortness~$x^*=x_{k+1}^*$ and~$\Psi(x) = \Psi_{k+1}(x)$. 
For~$t = 0$ we have ~$\|x^* - z_0\|_2 \le R_0$. 
Let us assume that~$\|x^* - z_t\|_2 \le R_t$ and show that~$\|x^* - z_{t + 1}\|_2 \le R_{t + 1}$. 
By Assumption \ref{assum:high} and \eqref{eq:Psi_min} it is clear that~$\Psi(x)$ has~$L_{\Psi,3}$-Lipschitz third-order derivative with 
$L_{\Psi,3}=( 1 + A_{k+1}\mu_{F/\phi} )L_{\phi,3}$. Recall that~$\mu_{\Psi}=( 1 + A_{k+1}\mu_{F/\phi} )\mu_\phi$.
From {\cite{nesterov2020inexact}[Theorem 2]} since~$\Psi$ is~$\mu_{\Psi}$-strongly convex and has~$L_{\Psi,3}$-Lipschitz third-order derivative, it holds that
	\[
	\frac{\mu_{\Psi}}{2} \|z_{t + 1} - x^*\|_2^2 \le \Psi(z_{t + 1}) - \Psi(x^*) \le \frac{c L_{\Psi,3} \|z_t - x^*\|_2^{4}}{N_t^{5}} \le \frac{\mu_{\Psi} (R_t / 2)^2}{2} = \frac{\mu_{\Psi} R_{t + 1}^2}{2}
	\]
	by the choice of~$N_t$ and since~$L_{\Psi,3}/\mu_{\Psi}=L_{\phi,3}/\mu_{\phi}$.
	Thus, by induction, we have \eqref{eq:Hyper_V_rate}.
	
	It remains to estimate the number of iterations of the basic Hyperfast method. From \eqref{eq:Hyper_V_rate} we see that to reach the accuracy~$\Delta_k$ it is sufficient to make~$T=\frac{1}{2}\log_2\frac{2\mu_{\Psi}R^2}{\Delta_k}$ restarts. Summing up the number of operations~$N_t,\ t=0,...,T$, we obtain
	\begin{gather*}
	\sum_{t = 0}^T N_t \le \sum_{t = 0}^T \Bigg[ \bigg( \frac{8 c L_{\phi,3} R_t^{2}}{\mu_{\phi}} \bigg)^\frac{1}{5} + 1 \Bigg] = \bigg( \frac{8 c L_{\phi,3} R_0^{2}}{\mu_{\phi}} \bigg)^\frac{1}{5} \sum_{t = 0}^T 2^{-\frac{2t }{5}} + T  \\
	\leq 5\bigg( \frac{32 c L_{\phi,3} R^{2}}{\mu_{\phi}} \bigg)^\frac{1}{5}  + \log_2\frac{2\mu_{\Psi} R^2}{\Delta_k}.
	\end{gather*}
	Let us estimate the last term using \eqref{eq:V_tolerance} and that~$\mu_{\Psi}=( 1 + A_{k+1}\mu_{F/\phi} )\mu_\phi$,~$R_\phi^2 = 2L_{\phi}R^2$:
	\begin{gather*}
	\log_2\frac{2\mu_{\Psi} R^2}{\Delta_k} = \log_2\frac{2( 1 + A_{k+1}\mu_{F/\phi} )\mu_\phi R^2}{\frac{\mu_\phi (2L_{\phi}R^2)^2}{2k^2(2L_{\phi}R +3\theta )^2( 1 + A_{k+1}\mu_{F/\phi})}} \\
	= \log_2\frac{( 1 + A_{k+1}\mu_{F/\phi})^2k^2(2L_{\phi}R +3\theta )^2 }{L_{\phi}^2 R^2}.
	\end{gather*}
	Combining this with the previous chain of inequalities, we obtain the second statement of the lemma.
\end{proof}

\subsection{InSPAG plus Hyperfast Method with Application to Logistic Regression}

This subsection combines the building blocks introduced in the previous two subsections and considers a particular application to a regularized logistic regression problem, for which we obtain a total complexity bound in terms of the number of iterations of the Hyperfast second-order method. We further discuss the arithmetic iteration complexity of our method and compare it to that of stochastic variance-reduced first-order algorithms and indicate a regime in which our algorithm is preferable.

Combining Theorems \ref{Th:InSPAG_conv} and \ref{Th:Hyper_V_compl}, we obtain the following result.
\begin{theorem}
\label{Th:InSPAG+Hyperfast_compl}
Assume that in problem \eqref{eq:ERM1},~$h(x)=0$, and that its solution~$x_*$ belongs to the ball~$B_2(0,R)$. Assume that the function~$F$ in this problem is~$\mu_{F/\phi}$-strongly convex and~$L_{F/\phi}$-smooth with respect to the function~$\phi$, where~$\phi$ satisfies Assumption \ref{assum:Bregman}, is~$\mu_{\phi}$-strongly convex,~$L_\phi$-smooth and has~$L_{\phi,3}$-Lipschitz third-order derivative.  Also assume that for some~$\theta>0$ and all~$x \in B_2(0,R)$, with~$\max\{\|\nabla F(x)\|_2/\mu_{F/\phi}, \|\nabla \phi(x)\|_2\} \leq \theta~$.
Let~$\e >0$ be the target accuracy. Finally, let InSPAG (Algorithm \ref{algo:inspag}) be applied to problem \eqref{eq:ERM1}, and in step \ref{step:subproblem_SPAG} of this algorithm let Restarted Hyperfast method (Algorithm  \ref{alg:rest-Hyp-dist-V}) be applied to solve the auxiliary problem. Then a sufficient number of iterations of the basic  Hyperfast method to find an~$\e$-solution to~\eqref{eq:ERM1} is bounded as
\begin{equation}
\label{eq:InSPAG+Hyperfast_compl}
    O\left(  K \bigg( \frac{L_{\phi,3} R^{2}}{\mu_{\phi}} \bigg)^\frac{1}{5} + K\log_2\frac{\mu_{F/\phi}L_{\phi}R^2(L_{\phi}R +\theta )K\ln K }{L_{\phi} R \e}\right),
\end{equation}
where~$K$ is such that~$\frac{2L_{\phi}R^2(1+\ln (K+1))}{A_{K+1}} \leq \e < \frac{2L_{\phi}R^2(1+\ln K)}{A_{K}}$.
\end{theorem}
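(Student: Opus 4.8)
The plan is to combine the outer‑loop complexity of InSPAG from Theorem~\ref{Th:InSPAG_conv} with the per‑iteration cost of the Restarted Hyperfast method from Theorem~\ref{Th:Hyper_V_compl}, taking care that the accuracy $\Delta_k$ required by Lemma~\ref{Lm:inexact_func_to_Nemir} is exactly the one the inner solver is run to. First I would fix the total number $K$ of InSPAG iterations: by the rate estimate~\eqref{eq:InSPAG_rate1}, $f(x_K)-f(x_*)\le 2L_\phi R^2(1+\ln K)/A_K$, so it suffices to take $K$ as the first index for which this bound drops below $\e$, which is exactly the $K$ defined in the statement. (One may also invoke the exponential lower bound~\eqref{eq:A_k_rate_SPAG} on $A_K$ to see that $K = \widetilde O(\sqrt{\kappa_{F/\phi}\kappa_\phi}\,\ln(1/\e))$, but the cleaner statement is to leave $K$ implicitly defined.) Each of these $K$ outer iterations invokes, in step~\ref{step:subproblem_SPAG}, the Restarted Hyperfast method on $\Psi_{k+1}$; by Lemma~\ref{Lm:inexact_func_to_Nemir} it is enough to drive $\Psi_{k+1}(\hat x_{k+1})-\Psi_{k+1}(x_{k+1}^*)\le\Delta_k$ with $\Delta_k$ given by~\eqref{eq:V_tolerance}, and Theorem~\ref{Th:Hyper_V_compl} bounds the number of basic Hyperfast steps to do so by
\[
5\Big(\tfrac{32 c L_{\phi,3}R^2}{\mu_\phi}\Big)^{1/5}+\log_2\frac{(1+A_{k+1}\mu_{F/\phi})^2 k^2(2L_\phi R+3\theta)^2}{L_\phi^2 R^2}.
\]

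Next I would sum this per‑iteration bound over $k=0,\dots,K-1$. The first term is independent of $k$, so it contributes $O\!\big(K(L_{\phi,3}R^2/\mu_\phi)^{1/5}\big)$, absorbing $c=\Theta(1)$ into the $O(\cdot)$. For the logarithmic term I would bound each factor inside the log uniformly over $k\le K$: we have $k\le K$, $A_{k+1}\le A_K$, and from~\eqref{eq:A_k_rate_SPAG} together with the stopping condition on $K$ one gets $A_K = O(L_\phi R^2 K\ln K/\e)$ (since $2L_\phi R^2(1+\ln K)/A_K \le \e$ fails to hold at $K-1$, hence $A_K$ cannot be much larger than $L_\phi R^2 K\ln K/\e$ — more precisely $A_K \le 2 L_\phi R^2 (1+\ln K)/\e \cdot (\text{const})$ using that $A_k$ grows at most geometrically by a bounded factor per step because $M_{k+1}\le 2L_{F/\phi}\kappa_\phi$ by~\eqref{eq:M_bound} and Lemma~\ref{Lm:check_Bregman_assumpt}). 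Substituting $A_{k+1}\mu_{F/\phi} = O(\mu_{F/\phi}L_\phi R^2 K\ln K/\e)$ into the log and crudely bounding $(2L_\phi R+3\theta)^2 = O((L_\phi R+\theta)^2)$, $k^2\le K^2$, each logarithmic term is $O\!\big(\log_2\frac{\mu_{F/\phi}L_\phi R^2(L_\phi R+\theta)K\ln K}{L_\phi R\,\e}\big)$; multiplying by $K$ gives the second term of~\eqref{eq:InSPAG+Hyperfast_compl}.

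For the logistic‑regression specialization I would simply note that the regularized logistic loss $\phi$ of the form~\eqref{eq:phi_def} is strongly convex (thanks to the $\frac{\sigma}{2}\|x\|^2$ term, and $\sigma$ can be taken positive), $L_\phi$‑smooth, has bounded third derivatives on the ball $B_2(0,R)$ (so Assumption~\ref{assum:high} holds with an explicit $L_{\phi,3}$ depending on the data norms), and has gradients bounded on $B_2(0,R)$, so the hypothesis $\max\{\|\nabla F(x)\|_2/\mu_{F/\phi},\|\nabla\phi(x)\|_2\}\le\theta$ is met with an explicit $\theta$; thus all hypotheses of the combined theorem are satisfied and~\eqref{eq:InSPAG+Hyperfast_compl} applies verbatim.

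The main obstacle I expect is the bookkeeping around $A_K$: the log term in the inner complexity depends on $A_{k+1}$, and one must bound $A_K$ from above in terms of $\e$ using the \emph{definition} of $K$ (the left inequality $2L_\phi R^2(1+\ln(K+1))/A_{K+1}\le\e$ gives a lower bound on $A_{K+1}$, not what we want), so the upper bound on $A_K$ has to come from the \emph{failure} of the stopping criterion at step $K-1$ plus the fact that $A_k/A_{k-1}$ is bounded by a constant (since $\alpha_{k+1}^2 M_{k+1} = A_{k+1}(1+A_k\mu_{F/\phi})\le A_{k+1}(1+A_{k+1}\mu_{F/\phi})$ and $M_{k+1}$ is bounded). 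Getting a clean polynomial‑in‑the‑problem‑data expression inside the $\log_2$ — rather than a messy one — is where most of the care goes; everything else is a direct substitution and summation of geometric and constant series.
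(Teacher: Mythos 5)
Your proposal is correct and follows essentially the same route as the paper: stop InSPAG at the $K$ defined by the accuracy condition, invoke Theorem \ref{Th:Hyper_V_compl} with the tolerance $\Delta_k$ of Lemma \ref{Lm:inexact_func_to_Nemir} at each outer iteration, sum over $k \le K$, and bound the logarithmic factors uniformly via $k\le K$, $A_k\le A_K$, and an upper bound on $A_K$ in terms of $\e$. The only unneeded detour is your bookkeeping for that last bound: the right-hand inequality $\e < 2L_{\phi}R^2(1+\ln K)/A_K$ in the very definition of $K$ already yields $A_K < 2L_{\phi}R^2(1+\ln K)/\e$ directly, so the failure-at-$K-1$ plus bounded-geometric-growth argument for $A_k/A_{k-1}$ is unnecessary (and the logistic-regression paragraph is not part of this theorem).
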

\begin{proof}
From \eqref{eq:InSPAG_rate1} we see that InSPAG can be stopped at iteration~$K$ when we have~$\frac{2L_{\phi}R^2(1+\ln (K+1))}{A_{K+1}} \leq \e < \frac{2L_{\phi}R^2(1+\ln K)}{A_{K}}$. Then,~$f(x_{K+1}) - f(x_*) \leq \e$. Also, applying Theorem \ref{Th:Hyper_V_compl} we obtain that the total number of iterations of the basic  Hyperfast method, up to numerical constant multipliers, is bounded by
\begin{align*}
&\sum_{k=0}^K \left(\bigg( \frac{L_{\phi,3} R^{2}}{\mu_{\phi}} \bigg)^\frac{1}{5} +\log_2\frac{( 1 + A_{k}\mu_{F/\phi})k(L_{\phi}R +\theta ) }{L_{\phi} R}\right) \\
& \leq_c K \left(\bigg( \frac{L_{\phi,3} R^{2}}{\mu_{\phi}} \bigg)^\frac{1}{5} + \log_2\frac{( 1 + A_{K}\mu_{F/\phi})K(L_{\phi}R +\theta ) }{L_{\phi} R} \right) = \eqref{eq:InSPAG+Hyperfast_compl},
\end{align*}
where in equality~$\leq_c$ means a usual inequality up to a numerical constant factor.
\end{proof}

From \eqref{eq:A_k_rate_SPAG} and Lemma \ref{Lm:M_k_when_Lip_Hess} we know that when~$\phi$ has also Lipschitz Hessian, it is sufficient to take~$K=O\left(\sqrt{\kappa_{F/\phi}\kappa_{\phi}} \ln \frac{1}{\e}\right)$. Lemma \ref{Lm:M_k_when_Lip_Hess} also implies that for quadratic function~$\phi$ it is sufficient to take~$K=O\left(\sqrt{\kappa_{F/\phi}} \ln \frac{1}{\e}\right)$ and that for non-quadratic function~$\phi$ the result is the same up to a fast asymptotic. In the language of the individual loss~$\ell$ and the number of samples~$n$ used for preconditioning, our result is the same~$\widetilde O (\sqrt{\kappa_\ell}/n^{1/4})$ as for the exact algorithm~\cite{pmlr-v119-hendrikx20a}.  Thus, the total number of iterations of the basic  Hyperfast method to find an~$\e$-solution to \eqref{eq:ERM1} can be bounded as
\begin{equation}
\label{eq:SPAG+Hyper_bound_general}
    \widetilde{O}\left(\sqrt{\kappa_{F/\phi}}
     \bigg( \frac{L_{\phi,3} R^{2}}{\mu_{\phi}} \bigg)^\frac{1}{5}   \right).
\end{equation}

So far, we have not explicitly used the finite-sum structure of problem \eqref{eq:ERM1}, \eqref{eq:ERM1_1} and the statistical similarity~\eqref{eq:stat}. In order to do this, we consider the sparse empirical risk minimization problem with regularized logistic loss, where in \eqref{eq:ERM1_1}, for~$i \in \{1,\ldots,N\}$,  
\begin{align}\label{eq:log_reg}
\ell(x; \zeta_{i}) & =  \log \left(1{+}\exp({-}\eta_i \la x ,\mathbf{\xi}_{i}\ra)\right)   {+} \lambda_1 \sum_{j \in I_S}x_j^2{+} \lambda_2 \sum_{j \in I_D}x_j^2,
\end{align}
where~$\mathbf{\zeta}_{i} = (\xi_i,\eta_i)$,~$\eta_i=1$ indicates a positive (clicked) example, and~$\eta_i=-1$ otherwise. We assume there are two types of features, namely, sparse and dense features. Let~$\mathbf{\xi}_{i,j}$ be the~$j$-th element of the vector~$\mathbf{\xi}_i$. Then,~$\mathbf{\xi}_{i,j}$ is a sparse feature if~$\mathbf{\xi}_{i,j} =0$ for almost all~$i \in \{1,\ldots,N\}$, and a dense feature if~$\xi_{i,j} \neq 0$ for many~$i \in \{1,\ldots,N\}$. We denote by~$I_S$ (and~$I_D$) the set of sparse (and dense) features with~$I_S \cup I_D = \{1,\ldots,d\}$ and~$I_S \cap I_D = \emptyset$. Moreover, it follows from~Section 4.4 from \cite{nesterov2005smooth} that in this case the function~$F$ is~$L_F$-smooth with~$L_F=\max\{\lambda_1,\lambda_2\}+\frac{1}{N}\sum_{i=1}^N\|\eta_i\mathbf{\xi}_i\|_2^2=O(s)$, where~$s$ is the average number of nonzero elements in~$\xi_i$, and~$\mu_F$-strongly convex with~$\mu_F= \min\{\lambda_1,\lambda_2\}$. 
For the same reasons, function~$\phi$ defined in \eqref{eq:phi_def} is~$L_{\phi}$-smooth with~$L_{\phi}=\max\{\lambda_1,\lambda_2\}+\frac{1}{n}\sum_{i=1}^n\|\eta_i\mathbf{\xi}_i\|_2^2 + \sigma$ and~$\mu_{\phi}$-strongly convex with ~$\mu_{\phi}= \min\{\lambda_1,\lambda_2\} + \sigma$.
It also has bounded first-, second, and third-order derivatives~\cite{pmlr-v125-bullins20a}. 
More importantly, the logistic loss in~\eqref{eq:log_reg} has bounded fourth-order derivatives~\cite{pmlr-v125-bullins20a}, which means that Assumption \ref{assum:high} holds. 
Indeed, let us define matrix~$A = [\eta_1\mathbf{\xi}_1,\dots,\eta_n\mathbf{\xi}_n]^\top$. Then, by Theorem 5.4 in~\cite{pmlr-v125-bullins20a} with~$\mu=1$ the function~$\frac{1}{n}\sum_{i=1}^{n} \ell(x;\zeta_{i})~$ has Lipschitz third-order derivative with constant~$L_{\ell,3} = 15\|A^\top A\|_2^2$ w.r.t.~$2$-norm or with constant~$L_{\ell,3} = 15$ w.r.t.~$\|\cdot\|_{A^\top A}$-norm. Since adding a quadratic function does not change the  Lipschitz constant for the third-order derivative,~$\phi$  has Lipschitz third-order derivative with constant~$L_{\phi,3}=L_{\ell,3}$.

Applying~\cite[Theorem 3]{pmlr-v119-hendrikx20a}, we obtain that in our setting the statistical similarity parameter in \eqref{eq:stat} is~$\sigma = 1+\widetilde{O}\left( \frac{\max_{i=1,\ldots,n}\|\eta_i\mathbf{\xi}_i\|_2^{3/2}R}{\min\{\lambda_1,\lambda_2\}\sqrt{n}}\right)$ and a sufficient number of InSPAG iterations is~$\widetilde{O}(\sqrt{\kappa_\ell}/n^{1/4})$, which is similar to SPAG~\cite{pmlr-v119-hendrikx20a}. Further, the number of the basic Hyperfast iterations is the same up to a factor
\[
 \bigg( \frac{L_{\phi,3}R^{2}}{\mu_{\phi}}\bigg)^\frac{1}{5}  \leq_c \bigg(\frac{\|A^\top A\|_2^2R^{2}}{\min\{\lambda_1,\lambda_2\}+\sigma}\bigg)^\frac{1}{5} \leq \bigg(\frac{\|A^\top A\|_2^2R^{2}}{\min\{\lambda_1,\lambda_2\}}\bigg)^\frac{1}{5}.
\]

Informally speaking, applying statistical preconditioning allows reducing the minimization of a large sum~$F$ of~$N$ functions in \eqref{eq:ERM1_1} to the  minimization of a moderate sum~$\phi$ of~$n$ functions when  making the step \ref{step:subproblem_SPAG} of Algorithm \ref{algo:inspag}.
To conclude this subsection we would like to discuss the complexity of minimizing function~$\Psi$ in \eqref{eq:Psi_min} which is equivalent to step \ref{step:subproblem_SPAG} of Algorithm \ref{algo:inspag}. To that end, we consider the setting of sparse logistic regression with loss \eqref{eq:log_reg}.
Since~$\phi$ and~$\Psi$ have finite-sum form, a straightforward approach is to apply accelerated variance reduced methods. This leads to arithmetic operations complexity 
\begin{equation}\label{VR}
    \widetilde{O}\left(s\cdot\left(n + \sqrt{n{\kappa}}\right)\right),
\end{equation}
where~$s$ comes from the cost of evaluating a sparse stochastic gradient~$\nabla \ell (x;\zeta_i)$ for some random~$i$, and the rest is the optimal bound on the number of stochastic gradient evaluations for such methods~\cite{lanfirst}. Note that we have~$\kappa=L_{\Psi}/\mu_{\Psi}=L_{\phi}/\mu_{\phi}$.

We propose an alternative approach by applying Hyperfast second-order methods to minimize the function~$\Psi$. Since basic Hyperfast second-order methods are a special implementation of third-order method~\cite{nesterov2019implementable,gasnikov2019near,doikov2019contracting,nesterov2020superfast,nesterov2020inexact,kamzolov2020near}, each their iteration  requires to minimize the regularized third-order Taylor polynomial:
\begin{align}\label{eq:tensor}
    \min_{y\in\mathbb{R}^d}\bigg\{ \langle \nabla \Psi(x), y {-} x \rangle  +\frac{1}{2}\nabla^2  \Psi(x)[y - x]^2 +   \frac{1}{6}\nabla^3 \Psi(x) \left[y {-} x\right]^3 + \frac{L_{\Psi,3}}{8}\|y {-} x\|_2^4 \bigg\}.
\end{align}
It is shown in~\cite{nesterov2019implementable} that the objective in~\eqref{eq:tensor} is relatively smooth and strongly convex with respect to the function~$a(y) = \frac{1}{2}\nabla^2 \Psi(x)[y - x]^2 + \frac{L_{\Psi,3}}{8}\|y - x\|_2^4$ with~$\mu_{\Psi/a} = 1 - {1}/{\sqrt{2}}$,~$L_{\Psi/a} = 1 + {1}/{\sqrt{2}}$. Since~$\kappa_{\Psi/a}$ is a constant, the complexity of solving~\eqref{eq:tensor} is, up to logarithmic factors, the same as for minimizing~$a(y)$. 
In turn, the complexity of solving this problem, up to logarithmic factors, is the same as the complexity of a quadratic programming problem and can be estimated by the complexity of matrix inversion~\cite{nesterov2020inexactB}. 
To sum up, the arithmetic operations complexity of minimizing the function~$\Psi$ by the Restarted Hyperfast second-order method has the form
\begin{equation}\label{eq:compl}
    \widetilde{O}\left( \left({s^2n}
    + {d}^{\log_2 7} \right)\cdot {\left(\frac{L_{\phi,3}R^2}{\mu_{\phi}}\right)^{1/5}} \right),
\end{equation}
see {\cite{gasnikov2019near,nesterov2020inexact,kamzolov2020near}} for more details on arithmetic complexity of each iteration of the basic Hyperfast method. The first term in~\eqref{eq:compl}, i.e., {$s^2n$}, is due to the complexity of Hessian calculation. The second term, i.e.~${d}^{\log_2 7}$, corresponds to the complexity of Hessian inversion, e.g., by the matrix inversion lemma using Strassen's algorithm~\cite{huang2016strassen}. The term~${\left(\frac{L_{\phi,3}R^2}{\mu_{\phi}}\right)^{1/5}}$ comes from the estimate for the number of iterations of the basic Hyperfast second-order method, see Theorem \ref{Th:Hyper_V_compl}.
Additionally, we may expect~$R^2 = O(d)$, since~$\text{dim } \,x_* = d$ and~$L_{\phi,3} = O\big(\frac{1}{n}\sum_{i=1}^{n}\|\eta_i\xi_i\|_2^4\big) = O(s^2)$ since we consider sparse logistic regression.

Without loss of generality, we can assume that the parameter~$n$ can be set such that~${d}^{\log_2 7} = O\left({s^2n}\right)$. In this case, the Hyperfast second-order method with complexity \eqref{eq:compl} outperforms accelerated variance reduced algorithms with complexity \eqref{VR} if~$\mu_{\phi}\lesssim s^{-3}n^{-2}~$. Where~$\lesssim$, and~$\simeq$ mean the same as~$\leq$ and~$=$, but up to dimension-dependent factors of the order~$O(1)$.
For the particular case of sparse logistic regression problems, our focused application, we can assume that~$s=\widetilde{O}(1)$. Therefore, we have that if~$d \lesssim n^{0.356}$ and~$\mu_{\phi}\lesssim n^{-2}$, or, equivalently, if~$d^{\log_2 7}\lesssim n \lesssim \mu_{\phi}^{-1/2}$, then, the Hyperfast second-order method has smaller arithmetic operations complexity than variance reduced algorithms. 
The last inequality is reasonable when the requirement for the accuracy is high. Indeed, in practice, via regularization~\cite{gasnikov2017universal}, it is reasonable to set~$\mu_{\phi}\simeq \mu_F \simeq \varepsilon/R \simeq \varepsilon/d$, where~$\varepsilon > 0$ is a desired accuracy. 
Thus, in this case we can rewrite the last inequality as~$\varepsilon \lesssim n^{-1.644}$ ($d^{2.81}\lesssim n \lesssim \varepsilon^{-0.61}$). We can conclude that Hyperfast second-order methods are better when our goal is to solve sparse logistic regression with loss~\eqref{eq:log_reg} with high accuracy. This result can be strengthened by using parallelization. 
In the  complexity bound \eqref{VR} for variance reduced algorithms, only the first term can be improved by applying parallelization on~$n$ nodes.
On the contrary, in the bound \eqref{eq:compl} for Restarted Hyperfast method, the first term can be improved by parallelization on~$n$ nodes, and the second can be improved by parallelization on~$d$ nodes.

To conclude, high-order methods are competitive from the theoretical point of view for large-scale convex problems that require high accuracy of the solution, especially when the problem is sparse. 
Further improvements can potentially be  achieved by using inexact tensor methods~\cite{nesterov2020inexactB,doikov2020inexact,agafonov2020inexact,kamzolov2020optimal} to save some computational work.

\section{Numerical Analysis and Implementation Details}\label{sec:numerics}

In this section, we present numerical experiments and implementation details of Algorithm~\ref{algo:inspag}. Namely, on the example of regularized logistic regression, we demonstrate the practical performance of InSPAG method with Hyperfast subsolver (InSPAG+Hyperfast) and compare it with the state-of-the-art methods such as DANE, DANE-HB and SPAG with SDCA subsolver. For the logistic regression, we show that InSPAG+Hyperfast outperforms other methods even for huge-dimensional problems with 710M samples and 3.2M features.

We work with binary classification problems with regularized logistic regression cost function~\eqref{eq:log_reg} on a public datasets from LibSVM1\footnote{\url{https://www.csie.ntu.edu.tw/~cjlin/libsvmtools/datasets/binary.html}}, namely
RCV1~\cite{rcv1},  and a proprietary large-scale in-house dataset that was generated from the click logs of a large-scale commercial system for mobile app install ads. The main statistics of the datasets are shown in Table~\ref{tbl:stats}.

\begin{table}[h]
    \centering
    \begin{tabular}{c||c|c|c|c}
        Dataset &~$N$ &~$d$ & Feat. & Size\\\hline\hline
        RCV1 &20k & 47k& 74.05 & 13.7 \\\hline
        In-house &710M & 3,246k & 109.86 & 650.8k \\\hline
    \end{tabular}
    \caption{Statistics of the datasets.~$N$ is the number of samples,~$d$ is the number of features, Feat. is the average number of dense features, and Size is the data size in MB. }
    \label{tbl:stats}
\end{table}

We obtained an MPI-based distributed implementation of SPAG from the authors of~\cite{pmlr-v119-hendrikx20a} and modified it to run on an Apache Spark~\cite{apachespark} cluster. As shown in Algorithm~\ref{algo:inspag}, InSPAG switches between two phases: a parallel gradient computation phase and a central-node optimization phase in which we run the Hyperfast second-order method in Algorithm~\ref{alg:rest-Hyp-dist-V}. In our implementation, the driver carries the central-node optimization phase while executors compute the gradient. The code for the implementations was developed in PyTorch~\cite{pytorch}. Algorithm~\ref{alg:rest-Hyp-dist-V}, in each iteration of the basic Hyperfast method, requires a line-search where to calculate a test point the full step \eqref{eq:tensor} is made. The number of such line-search steps is theoretically bounded above by~$O(\log(\e^{-1}))$. However, we observe that the line-search ends in approximately~$5$ trials in practice. Therefore, we bound the number of iterations executed in the line-search procedure. Additionally, our experiments show that the number of steps required in the line-search procedure decreases as more iterations of Algorithm~\ref{algo:inspag} is executed. 
In the execution of the third-order step \eqref{eq:tensor} it is sufficient to approximate the product of the third derivative with two vectors.
To do this, we use off-the-shelf automatic differentiation codes and observe that the resulting computational complexity is equivalent approximately to~$4-6$ gradient computations.

As explained in Sect. 5.2 from \cite{nesterov2020inexact}, or Algorithm 2 from \cite{kamzolov2020hyperfast}, the problem \eqref{eq:tensor} is solved by Bregman proximal gradient method under relative smoothness and strong convexity assumption~\cite{lu2018relatively}.
Each step of this algorithm applied to \eqref{eq:tensor} requires to solve the problem
\begin{align}\label{eq:inner_most}
    \min_{s \in \R^d} \left\{ \la c,s\ra + \frac{1}{2}\la \nabla^2 \Psi(x) s,s\ra +\frac{L}{4} \|s\|^4_2\right\},
\end{align}
where the vector~$c$ involves~$\nabla \Psi(x)$ and~$\nabla^3\Psi(x)[s]^2$,~$L$ is some regularization parameter. We solve problem~\eqref{eq:inner_most} using ADAM~\cite{kingma2014adam} since then the gradient~$c+ \nabla^2 \Psi(x) s + L \|s\|^2_2 s$ of the objective uses the Hessian only through Hessian-vector products which can be calculated using automatic differentiation. We observed that in practice this takes approximately~$2-3$ times the time required for gradient computation. 
Thus, on the lowest level, our method is a first-order method with a Hessian-vector product and a third-order derivative product with two vectors computed by automatic differentiation techniques. 
The full Hessians or full third-order derivatives are not computed but are used for the method to exploit the additional curvature of the objective and improve the practical convergence speed. Moreover, the central node uses GPU to accelerate the various Hessian-related matrix-vector operations in the algorithm. We believe our implementation\footnote{\url{https://github.com/OPTAMI/OPTAMI/}} to be the first practical implementation of an algorithm from the family of Hyperfast or even a wider family of higher-order optimizers that can operate on data at the above dimensionality.

We compare Algorithm~\ref{algo:inspag} with the inner solver being Algorithm~\ref{alg:rest-Hyp-dist-V} and Algorithm~\ref{algo:inspag} with the inner solver being Stochastic Dual Coordinate Ascent (SDCA)~\cite{sdca} used in~\cite{pmlr-v119-hendrikx20a}. For the RCV1 dataset, we also compare the performance of  Algorithm~\ref{algo:inspag} versus DANE~\cite{shamir2014communication} with both SDCA and Hyperfast as the central-node solver. We used~$n=10^4$ samples for preconditioning,~$\lambda = 10^{-5}$,~$\sigma = 2 \times 10^{-5}$, constant~$L_{F / \phi}=0.01$, and a practical approximate~$10^{-2}$ for~$R_{\phi}^2$. We set the precision of the auxiliary subproblem to~$10^{-4}$. Other parameters:~$L_{3}=0.005$, the learning rate of ADAM is set to~$1$, and the number of iterations of ADAM is~$2$. Figures \ref{fig:rcv1} and \ref{fig:time} show results for the RCV1 dataset. The point~$\hat{x}$ is set as the point where the minimal cost was achieved overall the iterations and runs of the algorithm and serves as a proxy point used instead of the minimizer, which is in general unknown. We see that Algorithm~\ref{algo:inspag} outperforms DANE regardless of the subsolver used. Moreover, InSPAG-SDCA has better performance during initial iterations. However, InSPAG-Hyperfast outperforms all other methods by accuracy. Also, we find that Hyperfast iterations are faster than SDCA near the minimum point. For example, the first five iterations take about~$20$ seconds each, and the last five take about~$1.5$ seconds each. Hence, suggesting that some combination of methods would be used in practice. However, the Hyperfast approach finds better solutions overall.

\begin{figure}[t!]
    \centering
    \includegraphics[width=0.8\linewidth]{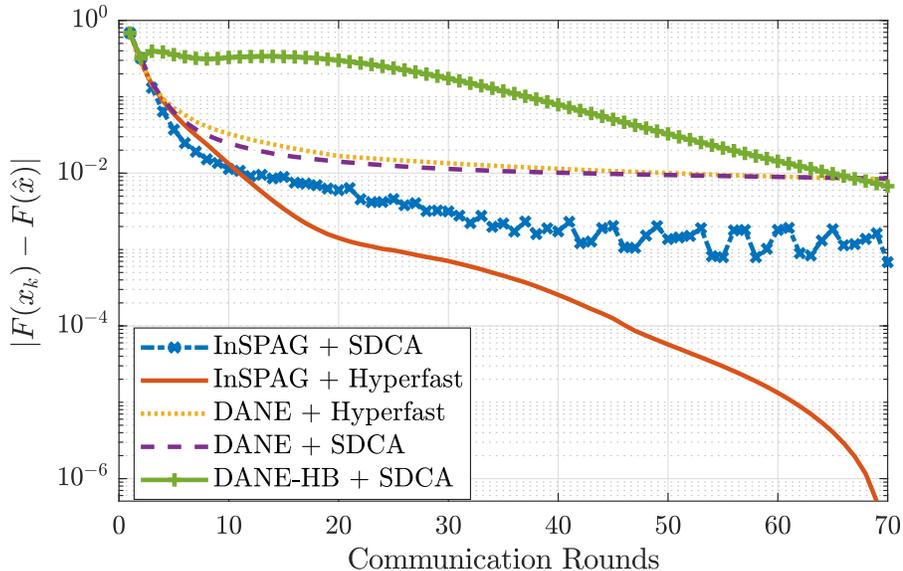}
    \vspace{-0.6cm}
    \caption{Comparison of the communication rounds number for the dataset RCV1.}
    \label{fig:rcv1}
\end{figure}

\begin{figure}[t!]
    \centering
    \includegraphics[width=0.8\linewidth]{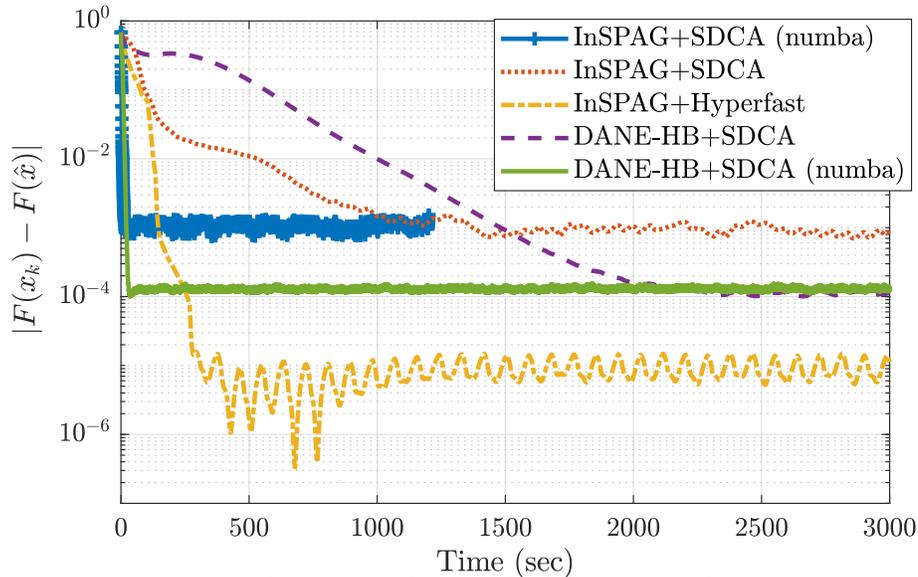}
    \vspace{-0.6cm}
    \caption{Wall clock time performance of the InSPAG method for the dataset RCV1. ``numba'' indicates implementation using \textit{Numba: A High Performance Python Compiler}.}
    \label{fig:time}
\end{figure}

\begin{figure}[t!]
    \centering
    \includegraphics[width=0.8\linewidth]{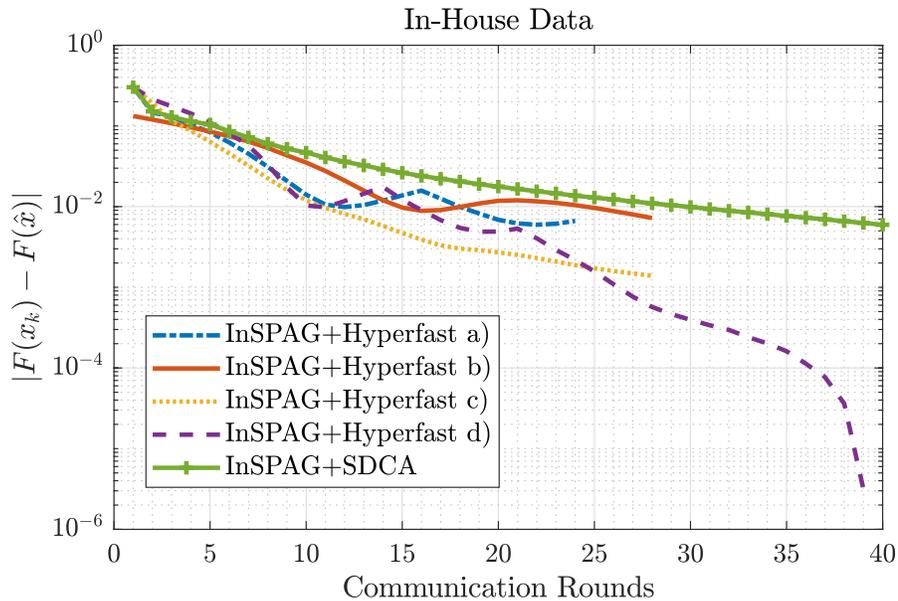}
    \vspace{-0.6cm}
    \caption{Comparison of the communication rounds number for the \text{in house} dataset. a)~$L_3=10$, ADAM learning rate~$0.01$,~$n=10000$; b)    ~$L_3=100$, ADAM learning rate~$0.1$,~$n=10000$; c)~$L_3=10$, ADAM learning rate~$0.1$,~$n=10000$; d)~$L_3=15$, ADAM learning rate~$0.01$,~$n=1000$.}
    \label{fig:in-house}
\end{figure}

\begin{figure}[t!]
    \centering
    \includegraphics[width=0.8\linewidth]{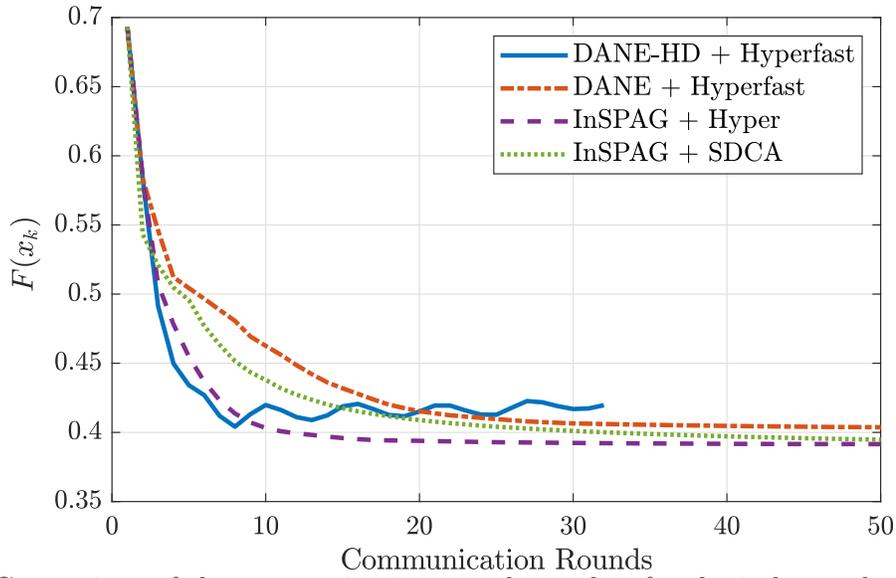}
    \vspace{-0.6cm}
    \caption{Comparison of the communication rounds number for the \text{in house} dataset for different methods. }
    \label{fig:in-house2}
\end{figure}

Figures \ref{fig:in-house}, \ref{fig:in-house2} show the results of the comparison on the in-house dataset (split over 200 nodes, i.e.,~$m=200$) with~$\lambda = 1\times 10^{-7}$,~$\sigma = 2  \times 10^{-5}$. Other parameters are described in Table~\ref{tbl:inhouse}. We see that InSPAG-Hyperfast outperforms InSPAG-SDCA for this large-scale dataset.  

\begin{table}[h]
    \centering
    \begin{tabular}{c||c|c|c|c}
        Run &~$L_3$ & ADAM &~$n$ &~$\mu$\\\hline\hline
        a) &~$10$ &~$0.01$ &~$1\times 10^4$&$2\times 10^{-5}$ \\
        b) &~$100$ &~$0.1$ &~$1\times 10^4$&$2\times 10^{-5}$ \\
        c) &~$10$ &~$0.1$ &~$1\times 10^4$&$2\times 10^{-5}$ \\
        d) &~$15$ &~$0.01$ &~$1\times 10^3$&$2\times 10^{-5}$ \\
    \end{tabular}
    \caption{Parameter selection for experiments on in-house data. }
    \label{tbl:inhouse}
\end{table}

Figure~\ref{fig:innner_times} shows the times required by the Hyperfast method in Algorithm~\ref{alg:rest-Hyp-dist-V} and the SDCA Method from~\cite{sdca} to complete their inner iterations at communication rounds~$0$,~$6$, and~$46$. The~$x$-axis is the iteration number, and the~$y$-axis is the time required by the corresponding algorithm to complete an inner iteration. We can observe that in the communication round~$0$, the cost time required by both methods is approximately the same on average. However, for communication rounds~$6$ and~$46$, the Hyperfast method outperforms SDCA, requiring less time to complete an iteration.

\begin{figure}[t!]
    \centering
    \includegraphics[width=1.1\linewidth]{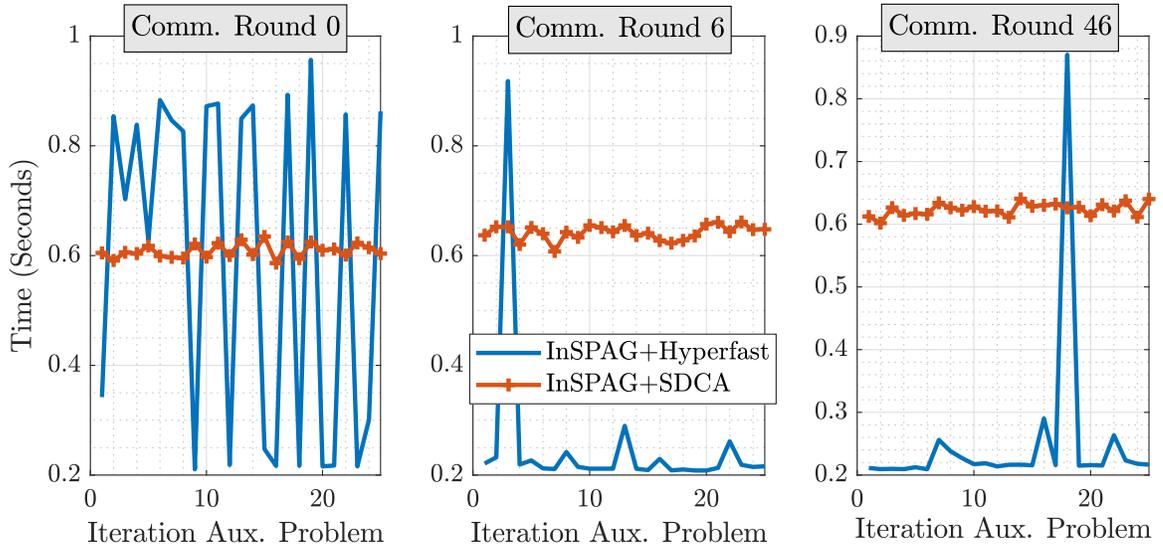}
    \vspace{-0.6cm}
    \caption{The time complexity per iteration for the Hyperfast method in Algorithm~\ref{alg:rest-Hyp-dist-V} and the SDCA Method from~\cite{sdca} at communication rounds~$0$,~$6$, and~$46$. The~$x$-axis is the iteration number, and the~$y$-axis is the the time required by the corresponding algorithm to complete its inner iteration.}
    \label{fig:innner_times}
\end{figure}

Figure~\ref{fig:loss_wall} on the left shows the loss function~$F(x_k)$ evaluated at the point ~$x_k$ generated by iteration~$k$ as a function of the wall clock time recorded by the InSPAG method in Algorithm~\ref{algo:inspag}. Markers identify when an iteration has been completed. In this case we used the Hyperfast method in Algorithm~\ref{alg:rest-Hyp-dist-V} as the inner solver. Moreover, we show the dependency on the number~$n$ of points used for preconditioning. We observe that for different values of~$n$, the final loss is about the same. However, as~$n$ increases, the wall clock time required increases as well.
On the other hand, the right figure shows the loss function~$F(x_k)$ evaluated at the point~$x_k$ generated by iteration~$k$ as a function of the number of communication rounds. As expected, when the number of data points used for preconditioning increases, the number of required communication rounds decreases. However, this implies that the central node needs to solve a bigger problem at every iteration and it takes longer to solve it.

\begin{figure}[t!]
    \centering
    \includegraphics[width=1.1\linewidth]{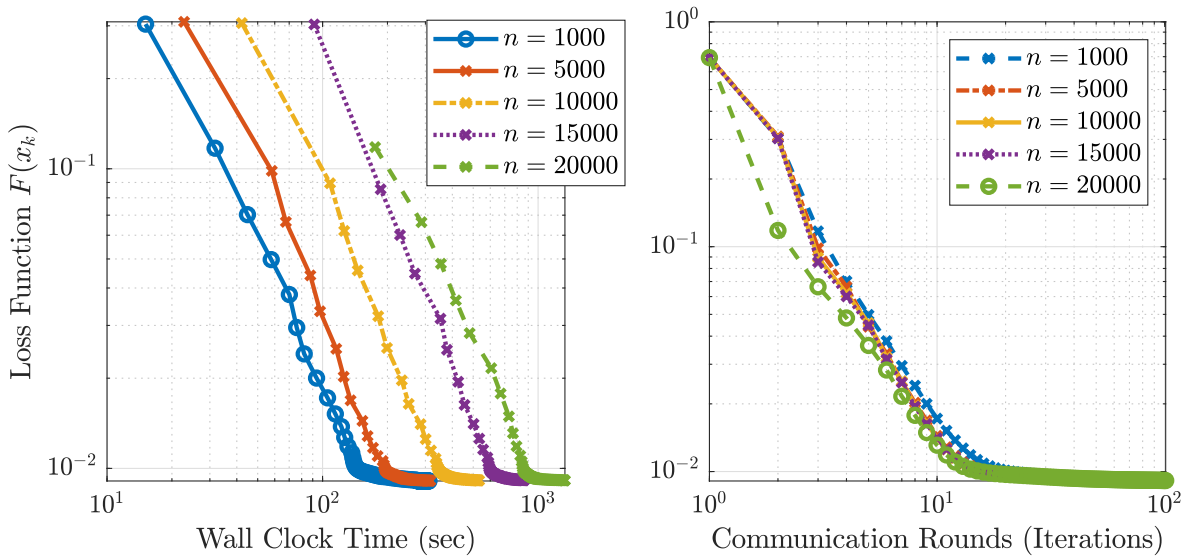}
    \vspace{-0.6cm}
    \caption{A comparison of the wall clock times and communication rounds for the InSPAG method in Algorithm~\ref{algo:inspag} for different number of data points used for preconditioning. On the left, the~$x$-axis indicates time in seconds, and on the right the~$x$-axis indicates number of communication rounds. In both cases the~$y$-axis is the loss function at the current iteration.}
    \label{fig:loss_wall}
\end{figure}

Figure~\ref{fig:cloc_wall} shows the wall clock time required by the central node to solve the auxiliary problem for every communication round. The~$x$-axis shows the number of communication rounds, and the~$y$-axis shows the clock time in seconds. Additionally, we show the results for different values of the preconditioning parameter~$n$. As~$n$ increases, the time required for the solution of the auxiliary problem increases as well. However, the time complexity of the auxiliary subproblem decreases as the number of communication rounds increases. 

\begin{figure}[t!]
    \centering
    \includegraphics[width=0.8\linewidth]{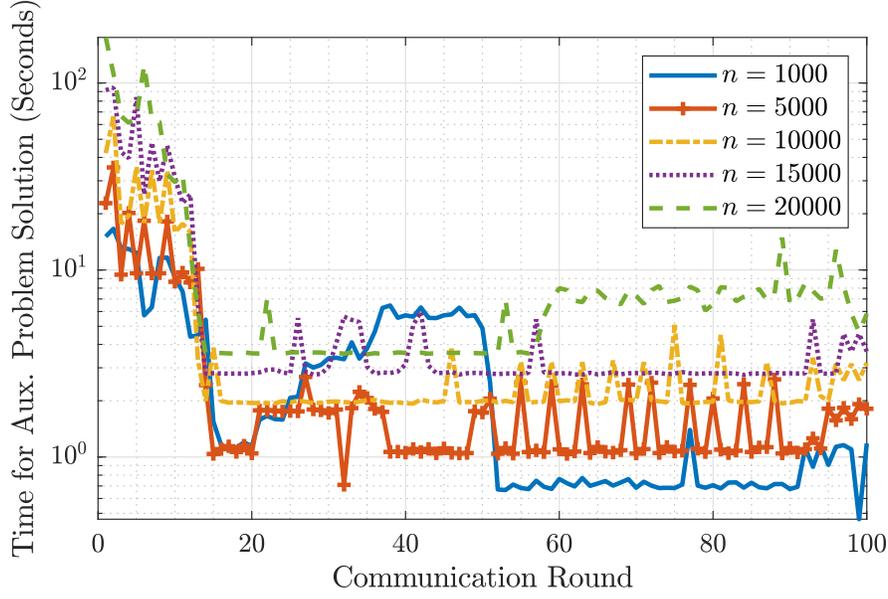}
    \caption{Time complexity for the solution of the auxiliary subproblem for different number of preconditioning data points. The~$x$-axis shows the number of communication rounds, and the~$y$-axis shows the clock time in seconds. }
    \label{fig:cloc_wall}
\end{figure}

\section{Hyperfast Second-Order Method for Uniformly Convex Functions}\label{app:uni}
For the sake of completeness, in this section we consider general problem
$
x_* = \arg\min_{x \in Q} f(x),
$
where~$Q$ is closed convex bounded set,~$f$ has~$L_3$-Lipschitz third-order derivative.
We also assume that the objective~$f(x)$ is uniformly convex of degree~$4 \geq q \geq 2$ on the convex bounded set~$Q$, i.e., there exists~$\sigma_q >0$ s.t.
\begin{equation}
    \label{eq:unif-conv-def}
    f(y) \geq f(x) + \la \nabla f(x) , y-x \ra + \frac{\sigma_q}{q}\|y-x\|_2^q,\quad \forall x,y \in Q.
\end{equation}
As a corollary,
\begin{equation}
    \label{eq:unif-conv-opt-point}
    f(y) \geq f(x_*) +  \frac{\sigma_q}{q}\|y-x_*\|_2^q, \forall y \in Q.
\end{equation}

\begin{theorem}[{\cite{nesterov2020inexact}[Theorem 2]}]
\label{Th:Hyperfast_conv_1}
Let sequence~$x^k$,~$k \geq 0$ be generated by Hyperfast Second-Order Method~\cite{nesterov2020inexact}[Eq.3.6] for~$p=3$ and~$\beta=1/2$ and with auxiliary steps described in~\cite{nesterov2020inexact}[Sect. 5.2]. Then 
\[
f(x_k) - f^* \leq \frac{3\cdot4^3L_3R_0^4}{1-\beta}\left[1+\frac{2(k-1)}{4} \right]^{-5} \leq \frac{3\cdot4^4L_3R_0^4}{16k^5}=\frac{\hat{c}L_3R_0^4}{k^5},
\]
where~$R_0$ is such that~$\|x_0-x^*\|_2\leq R_0$,~$\hat{c}=48$. 
\end{theorem}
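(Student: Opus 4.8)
This statement is quoted as \cite{nesterov2020inexact}[Theorem 2], so in the present paper I would simply cite that reference; nonetheless the plan for a self-contained argument is to recall the inexact accelerated high-order proximal-point (A-HPE / Monteiro--Svaiter) framework underlying the Hyperfast method and to track how the choices $p=3$ and $\beta=1/2$ produce the displayed constants. The method keeps iterates $x_k,y_k,v_k$ and scalars $A_k$ with $A_{k+1}=A_k+a_{k+1}$; at step $k$ it performs a one-dimensional search over a prox-parameter $\lambda_{k+1}$, sets $\tilde x_k=(A_kx_k+a_{k+1}v_k)/A_{k+1}$ with $a_{k+1}^2\simeq\lambda_{k+1}A_{k+1}$, computes an approximate solution $y_{k+1}$ of the Euclidean proximal subproblem $\min_y\{f(y)+\tfrac{1}{2\lambda_{k+1}}\|y-\tilde x_k\|_2^2\}$ meeting the relative stopping rule $\|\lambda_{k+1}\nabla f(y_{k+1})+y_{k+1}-\tilde x_k\|_2\le\beta\|y_{k+1}-\tilde x_k\|_2$, and updates $v_{k+1}=v_k-a_{k+1}\nabla f(y_{k+1})$. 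The proximal subproblem itself is handled by a constant number of steps of the basic third-order tensor iteration.

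First I would reproduce the generic A-HPE estimate: a potential-function argument showing that $A_k(f(y_k)-f^*)+\tfrac12\|v_k-x^*\|_2^2$ is non-increasing up to error terms gives $f(y_k)-f^*\le\tfrac{\|x_0-x^*\|_2^2}{2A_k}$, where the $\beta$-relative-error condition is exactly what lets the accumulated inexactness telescope into the benign prefactor $1/(1-\beta)$; a Cauchy--Schwarz step then yields $A_k\ge\tfrac14\big(\sum_{i=1}^k\sqrt{\lambda_i}\big)^2$. The substance of the proof is a lower bound on the prox-parameters $\lambda_{k+1}$ that the line search can certify: because the regularized subproblem is relatively smooth and strongly convex with respect to the scaling function $\tfrac12\nabla^2 f(\cdot)[\cdot]^2+\tfrac{L_3}{8}\|\cdot\|_2^4$, a bounded number of third-order tensor steps — invoking their $O(L_3\|\cdot\|_2^4/m^5)$ rate — produces a $y_{k+1}$ meeting the relative-error test as soon as $\lambda_{k+1}\lesssim(L_3\|y_{k+1}-\tilde x_k\|_2^2)^{-1}$. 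Substituting this into $a_{k+1}^2\simeq\lambda_{k+1}A_{k+1}$, using that the iterates remain in a ball of radius $O(R_0)$ around $x^*$, and telescoping the resulting estimates yields $A_k\gtrsim k^5/(L_3R_0^2)$; plugging this into $f(y_k)-f^*\le\|x_0-x^*\|_2^2/(2A_k)$ with $\|x_0-x^*\|_2\le R_0$ and inserting $p=3$, $\beta=1/2$ into the explicit constants gives the two displayed inequalities and $\hat c=48$.

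I expect the main obstacle to be precisely this ``big-step'' lemma: showing that the line search terminates with a prox-parameter of order $(L_3\|y_{k+1}-\tilde x_k\|_2^2)^{-1}$ \emph{while} the inner tensor solver still produces an approximate proximal point satisfying the relative-error criterion within $O(1)$ inner iterations. This couples three ingredients — the fast $O(1/m^5)$ rate of the third-order tensor step, the strong convexity artificially induced by the $\tfrac{1}{2\lambda}\|\cdot\|_2^2$ regularizer, and the scale-invariance of the relative-error test — and getting the joint dependence on $L_3$ and $\|y_{k+1}-\tilde x_k\|_2$ right is exactly what upgrades the generic accelerated power $2$ to the power $5$ in the final rate. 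Everything else (the potential-function bookkeeping, the Cauchy--Schwarz bound on $A_k$, and absorbing the inexactness into $1/(1-\beta)$) is routine once that lemma is in place.
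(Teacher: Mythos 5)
The paper itself gives no proof of this statement: it is imported verbatim from \cite{nesterov2020inexact} (Theorem 2), so your primary answer -- cite that reference -- coincides with what the paper does. However, the self-contained argument you sketch is not the proof of \emph{this} theorem: it reconstructs the Monteiro--Svaiter A-HPE route (quadratic proximal subproblem $\min_y\{f(y)+\tfrac{1}{2\lambda}\|y-\tilde x\|_2^2\}$, relative stopping rule on $\lambda\nabla f(y)+y-\tilde x$, a line search over $\lambda$, the bound $A_k\geq\tfrac14\bigl(\sum_i\sqrt{\lambda_i}\bigr)^2$, and a ``big-step'' lemma $\lambda\gtrsim (L_3\|y-\tilde x\|_2^2)^{-1}$), which is the mechanism behind the \emph{other} basic Hyperfast method cited in this paper (\cite{kamzolov2020near}, built on \cite{gasnikov2019near}, with constant $c=35$). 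The scheme of \cite{nesterov2020inexact}[Eq.~3.6] is different: it is an accelerated \emph{third-order proximal-point} method whose subproblem carries the quartic regularizer $\tfrac{H}{4}\|y-\bar x\|_2^4$ rather than a quadratic one, whose inexactness parameter $\beta$ enters through a relative condition on the gradient of the regularized objective at the returned point (no one-dimensional search over a prox parameter), and whose analysis runs through estimating functions; that analysis is precisely what produces the displayed constants $3\cdot 4^3/(1-\beta)$, the factor $[1+2(k-1)/4]^{-5}$, and $\hat c=48$. The auxiliary steps of Sect.~5.2 are Bregman second-order iterations exploiting relative smoothness and strong convexity of the quartically regularized objective with respect to a scaling function of the form $\tfrac12\langle\nabla^2 f(\bar x)h,h\rangle+O(L_3)\|h\|_2^4$, not ``a constant number of third-order tensor steps'' with an $O(L_3\|\cdot\|^4/m^5)$ rate (invoking such a rate inside the prox step would in any case be circular). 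So your plan, if carried out carefully, would prove the same $O(L_3R_0^4/k^5)$ rate for the MS-based Hyperfast variant, but it would not yield the quoted statement with its specific inexactness criterion and constants; to prove the theorem as stated you would need to follow Nesterov's estimating-function analysis of the high-order proximal-point scheme.
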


We show how the restart technique can be used to accelerate Hyperfast second-order method under additional assumption of uniform convexity.

\begin{algorithm}
\caption{Restarted Hyperfast Second-Order Method}
\label{alg:rest-Hyp}
\begin{algorithmic}[1]
   \REQUIRE~$q$,~$\sigma_q$,~$z_0, \Delta_0$ s.t.~$f(z^0)-f^* \leq \Delta_0.$
   \FOR{$k=0,1,...$}
			\STATE
			$
			    \text{Set} \quad \Delta_k = \Delta_0\cdot2^{-k} \quad \text{and} \quad N_k = \max\left\{\left\lceil \left( \frac{2\hat{c} L_3 q^{\frac{4}{q}}}{\sigma_q^{\frac{4}{q}}} \Delta_k^{\frac{4-q}{q}} \right)^{\frac{1}{5}}\right\rceil,1 \right\}.
			$
			\STATE Set~$z_{k+1}=y_{N_k}$ as the output of the basic Hyperfast method started from~$z_k$ and run for~$N_k$ steps.
			\STATE Set~$k=k+1$.
			\ENDFOR		
		\ENSURE~$z_k$.
\end{algorithmic}
\end{algorithm}

\begin{theorem}
\label{Th:main-unif-conv-conv}
Let sequence~$z^k$,~$k \geq 0$ be generated by Algorithm \ref{alg:rest-Hyp}. Then 
\[
\frac{\sigma_q}{q}\|z_k-x_*\|_2^q \leq f(z_k) - f^* \leq \Delta_0 \cdot 2^{-k},
\]
and the total number of steps of the basic Hyperfast method is bounded by ($c$ is the constant in Theorem 1.)
\[
\left(2\hat{c} q^{\frac{4}{q}}\right)^{\frac{1}{5}} \frac{L_3^{\frac{1}{5}} }{\sigma_q^{\frac{4}{5q}}} (\Delta_0)^{\frac{4-q}{5q}} \cdot \sum_{i=0}^k  2^{-i\frac{4-q}{5q}} + k.
\]

\end{theorem}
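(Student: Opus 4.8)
The plan is to prove both claims by a single induction on the restart index $k$, mirroring the classical restart argument for uniformly convex minimization, followed by a geometric-series estimate for the total iteration count.

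First I would prove the function-value decay $f(z_k) - f^* \le \Delta_0 2^{-k}$ by induction. The base case is the hypothesis $f(z_0) - f^* \le \Delta_0$. For the step, assume $f(z_k) - f^* \le \Delta_k$. The crucial move is to convert this into a distance bound via the corollary \eqref{eq:unif-conv-opt-point} of uniform convexity: $\|z_k - x_*\|_2 \le \big(\tfrac{q}{\sigma_q}(f(z_k)-f^*)\big)^{1/q} \le \big(\tfrac{q\Delta_k}{\sigma_q}\big)^{1/q} =: R_k$. Then I would invoke Theorem \ref{Th:Hyperfast_conv_1} for the $N_k$ steps of the basic Hyperfast method launched from $z_k$, taking the initial-radius parameter equal to $R_k$, which gives $f(z_{k+1}) - f^* \le \hat c L_3 R_k^4 / N_k^5$. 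Substituting $R_k^4 = (q\Delta_k/\sigma_q)^{4/q}$ one computes $\hat c L_3 R_k^4/\Delta_k = \tfrac12\cdot\tfrac{2\hat c L_3 q^{4/q}}{\sigma_q^{4/q}}\Delta_k^{(4-q)/q}$, and the definition of $N_k$ guarantees exactly $N_k^5 \ge \tfrac{2\hat c L_3 q^{4/q}}{\sigma_q^{4/q}}\Delta_k^{(4-q)/q}$; hence $f(z_{k+1}) - f^* \le \Delta_k/2 = \Delta_{k+1}$. When the bracketed quantity defining $N_k$ drops below $1$, so that $N_k = 1$, the inequality $\hat c L_3 R_k^4 \le \Delta_k/2$ already holds, so the $\max\{\cdot,1\}$ causes no trouble. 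The left inequality $\tfrac{\sigma_q}{q}\|z_k - x_*\|_2^q \le f(z_k) - f^*$ is just \eqref{eq:unif-conv-opt-point} evaluated at $y = z_k$.

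For the complexity bound I would sum the per-restart counts $N_i$ over the restarts used to reach $z_k$. Using $\max\{\lceil x\rceil, 1\} \le x + 1$ for $x\ge 0$ together with $\Delta_i = \Delta_0 2^{-i}$, each $N_i$ is at most $\big(\tfrac{2\hat c L_3 q^{4/q}}{\sigma_q^{4/q}}\big)^{1/5}\Delta_0^{(4-q)/(5q)}\,2^{-i(4-q)/(5q)} + 1$; writing the leading constant as $(2\hat c q^{4/q})^{1/5} L_3^{1/5}\sigma_q^{-4/(5q)}$ and collecting the $+1$'s into the additive $k$ yields the claimed expression. For $q < 4$ the ratio $2^{-i(4-q)/(5q)}<1$, so the geometric sum converges and the total is dominated by the first restart, recovering the rate used in Theorem \ref{Th:Hyper_V_compl}; in the endpoint case $q=4$ the exponent vanishes, the sum degenerates to a linear term in $k$, and the bound is still covered by the stated formula.

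The argument is essentially routine; the only points that need care are the handling of the ceiling and the $\max\{\cdot,1\}$ in the definition of $N_k$ — in particular verifying that a single basic-method step still halves the gap when the ideal $N_k$ would fall below one — and the degenerate endpoint $q=4$, where the per-restart cost no longer decays with $k$. Neither is a genuine obstacle.
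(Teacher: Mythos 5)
Your proposal is correct and follows essentially the same route as the paper's proof: induction on the restart index using the uniform-convexity corollary \eqref{eq:unif-conv-opt-point} to bound $\|z_k-x_*\|_2$, Theorem \ref{Th:Hyperfast_conv_1} with the choice of $N_k$ to halve the gap, and a geometric-series bound on $\sum_i N_i$ for the total iteration count. Your extra care with the ceiling and the $\max\{\cdot,1\}$ only makes explicit a step the paper leaves implicit, so there is nothing substantive to add.
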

\begin{proof}
Let us prove the first statement of the Theorem by induction. For~$k=0$ it holds. If it holds for some~$k\geq 0$, by choice of~$N_k$, we have that
\[
\frac{\hat{c}L_3}{N_k^{5}} \left( \frac{q \Delta_k}{\sigma_q}\right)^{\frac{4}{q}} \leq \frac{\Delta_k}{2}.
\]
By \eqref{eq:unif-conv-opt-point},
\[
\|z_k - x_*\|_2^{4} \leq \left( \frac{q (f(z_k)-f^*)}{\sigma_q}\right)^{\frac{4}{q}} \leq \left( \frac{q \Delta_k}{\sigma_q}\right)^{\frac{4}{q}}
\]
since, by our assumption,~$q\leq 4$.
Combining the above two inequalities and Theorem \ref{Th:Hyperfast_conv_1}, we obtain
\[
f(z_{k+1}) -f^* \leq \frac{\hat{c}L_3\|z_k-x_*\|_2^{4}}{N_k^{5}} \leq \frac{\Delta_k}{2} = \Delta_{k+1}.
\]
It remains to bound the total number of steps of the basic Hyperfast method. Denote~$\tilde{c} = \left(2\hat{c} q^{\frac{4}{q}}\right)^{\frac{1}{5}}$.
\[
\sum_{i=0}^k N_i \leq \tilde{c} \frac{L_3^{\frac{1}{5}} }{\sigma_q^{\frac{4}{5q}}} \sum_{i=0}^k (\Delta_0\cdot 2^{-i})^{\frac{4-q}{5q}} + k \leq \tilde{c} \frac{L_3^{\frac{1}{5}} }{\sigma_q^{\frac{4}{5q}}} (\Delta_0)^{\frac{4-q}{5q}} \cdot \sum_{i=0}^k  2^{-i\frac{4-q}{5q}} + k.
\]\end{proof}

Let us make a remark on the complexity of the restarted scheme in different settings. It is easy to see from Theorem \ref{Th:main-unif-conv-conv} that, to achieve an accuracy~$\e$, i.e.{,} to find a point~$\hat{x}$ s.t.~$f( \hat{x}) -f^* \leq \e$, the number of tensor steps in Algorithm \ref{alg:rest-Hyp} is
\[
O\left(\frac{L_3^{\frac{1}{5}} }{\sigma_q^{\frac{4}{5q}}} (\Delta_0)^{\frac{4-q}{5q}} + \log_2 \frac{\Delta_0}{\e}\right), q<4, \text{and} \;\;  O\left(\left(\left(\frac{L_3 }{\sigma_{4}}\right)^{\frac{1}{5}}+1 \right)\log_2 \frac{\Delta_0}{\e} \right), q=4.
\]

\section{Conclusions}\label{sec:conclusions}

We study the distributed optimization problem 
of minimizing empirical risk with smooth and (strongly) convex losses and i.i.d. data stored at nodes. 
Building upon the recent result on statistical preconditioning, we propose an algorithm that iteratively minimizes the objective function taking advantage of the statistical similarity of the cost functions across the nodes. Such statistical preconditioning requires solving an auxiliary optimization problem at a designated central node. Contrary to existing approaches, we analyze the case where this auxiliary problem is solved inexactly. Moreover, we provide the conditions on the accuracy of the solution that guarantees convergence at the same rate as the algorithm with access to exact minimizers of the auxiliary problem.
Additionally, we extend recently proposed Hyperfast second-order methods to the class of uniformly convex functions with bounded fourth-order derivatives. We show that the auxiliary problem in the statistically preconditioned distributed algorithm can be solved efficiently at a linear rate via this Hyperfast second-order method. We analyze the complexity of the proposed combination of the inexact statistically preconditioned algorithm with the Hyperfast second-order sub-solver and show that it converges linearly with the improved condition number. Finally, we show the first empirical results on implementing high-order methods on large-scale problems, where the dimension is of the order of~$3$ million, and the number of samples is~$700$ million. As a future research direction we indicate the application of the proposed algorithm to the regularized Wasserstein barycenter problem, which can be expressed as the minimization of large sum of higher-order smooth softmax functions~\cite{dvurechensky2018decentralize}.

\section*{Funding}
The work by D. Kamzolov was supported by a grant for research centers in the field of artificial intelligence, provided by the Analytical Center for the Government of the Russian Federation in accordance with the subsidy agreement (agreement identifier 000000D730321P5Q0002) and the agreement with the Moscow Institute of Physics and Technology dated November 1, 2021 No. 70-2021-00138.
The work by C. Uribe was supported by the Yahoo! Faculty Engagement Program and by the National Science Foundation under Grants No. 2211815 and No. 2213568. The work by P. Dvurechensky was funded by the Deutsche Forschungsgemeinschaft (DFG, German Research Foundation) under Germany's Excellence Strategy – The Berlin Mathematics Research Center MATH+ (EXC-2046/1, project ID: 390685689).

\section*{Acknowledgement}
The authors are grateful to Hadrien Hendrikx for sharing the code of the SPAG algorithm \url{http://proceedings.mlr.press/v119/hendrikx20a.html}.

\bibliography{bib/all_refs3,bib/PD_references,bib/references}

\begin{thebibliography}{10}

\bibitem{agafonov2020inexact}
A.~Agafonov, D.~Kamzolov, P.~Dvurechensky, and A.~Gasnikov.
\newblock Inexact tensor methods and their application to stochastic convex
  optimization.
\newblock {\em arXiv:2012.15636}, 2020.

\bibitem{apachespark}
Apache.
\newblock Spark 2.4.5, 2020.
\newblock \url{https://spark.apache.org/}.

\bibitem{arjevani}
Y.~Arjevani and O.~Shamir.
\newblock Communication complexity of distributed convex learning and
  optimization.
\newblock In {\em Advances in Neural Information Processing Systems},
  volume~28, pages 1756--1764. Curran Associates, Inc., 2015.

\bibitem{baes2009estimate}
M.~Baes.
\newblock Estimate sequence methods: extensions and approximations.
\newblock {\em Institute for Operations Research, ETH, Z{\"u}rich,
  Switzerland}, 2009.

\bibitem{bauschke2016descent}
H.~H. Bauschke, J.~Bolte, and M.~Teboulle.
\newblock A descent lemma beyond lipschitz gradient continuity: first-order
  methods revisited and applications.
\newblock {\em Mathematics of Operations Research}, 42(2):330--348, 2016.

\bibitem{bauschke2017descent}
H.~H. Bauschke, J.~Bolte, and M.~Teboulle.
\newblock A descent lemma beyond lipschitz gradient continuity: first-order
  methods revisited and applications.
\newblock {\em Mathematics of Operations Research}, 42(2):330--348, 2017.

\bibitem{beck2009fast}
A.~Beck and M.~Teboulle.
\newblock A fast iterative shrinkage-thresholding algorithm for linear inverse
  problems.
\newblock {\em SIAM Journal on Imaging Sciences}, 2(1):183--202, 2009.

\bibitem{ben-tal2015lectures}
A.~Ben-Tal and A.~Nemirovski.
\newblock {\em Lectures on Modern Convex Optimization (Lecture Notes)}.
\newblock Personal web-page of A. Nemirovski, 2020.

\bibitem{birgin2017worst-case}
E.~G. Birgin, J.~L. Gardenghi, J.~M. Mart{\'i}nez, S.~A. Santos, and P.~L.
  Toint.
\newblock Worst-case evaluation complexity for unconstrained nonlinear
  optimization using high-order regularized models.
\newblock {\em Mathematical Programming}, 163(1):359--368, May 2017.

\bibitem{pmlr-v125-bullins20a}
B.~Bullins.
\newblock Highly smooth minimization of non-smooth problems.
\newblock In {\em Proceedings of Thirty Third Conference on Learning Theory},
  volume 125 of {\em Proceedings of Machine Learning Research}, pages
  988--1030. PMLR, 09--12 Jul 2020.

\bibitem{carmon2020lower}
Y.~Carmon, J.~C. Duchi, O.~Hinder, and A.~Sidford.
\newblock Lower bounds for finding stationary points i.
\newblock {\em Mathematical Programming}, 184(1):71--120, 2020.

\bibitem{cartis2019universal}
C.~Cartis, N.~I. Gould, and P.~L. Toint.
\newblock Universal regularization methods: varying the power, the smoothness
  and the accuracy.
\newblock {\em SIAM Journal on Optimization}, 29(1):595--615, 2019.

\bibitem{cormen2009introduction}
T.~H. Cormen, C.~E. Leiserson, R.~L. Rivest, and C.~Stein.
\newblock {\em Introduction to algorithms}.
\newblock MIT press, 2009.

\bibitem{dean2008mapreduce}
J.~Dean and S.~Ghemawat.
\newblock Mapreduce: simplified data processing on large clusters.
\newblock {\em Communications of the ACM}, 51(1):107--113, 2008.

\bibitem{devolder2014first}
O.~Devolder, F.~Glineur, and Y.~Nesterov.
\newblock First-order methods of smooth convex optimization with inexact
  oracle.
\newblock {\em Mathematical Programming}, 146(1):37--75, 2014.

\bibitem{doikov2019contracting}
N.~Doikov and Y.~Nesterov.
\newblock Contracting proximal methods for smooth convex optimization.
\newblock {\em SIAM Journal on Optimization}, 30(4):3146--3169, 2020.

\bibitem{doikov2020inexact}
N.~Doikov and Y.~Nesterov.
\newblock Inexact tensor methods with dynamic accuracies.
\newblock In {\em Proceedings of the 37th International Conference on Machine
  Learning}, volume 119 of {\em Proceedings of Machine Learning Research},
  pages 2577--2586. PMLR, 13--18 Jul 2020.

\bibitem{dragomir2019optimal}
R.-A. Dragomir, A.~Taylor, A.~d'Aspremont, and J.~Bolte.
\newblock Optimal complexity and certification of bregman first-order methods.
\newblock {\em Mathematical Programming}, 194(1):41--83, 2022.

\bibitem{dvurechensky2018decentralize}
P.~Dvurechensky, D.~Dvinskikh, A.~Gasnikov, C.~A. Uribe, and A.~Nedić.
\newblock Decentralize and randomize: Faster algorithm for {W}asserstein
  barycenters.
\newblock In {\em Advances in Neural Information Processing Systems 31},
  NIPS'18, pages 10783--10793. Curran Associates, Inc., 2018.

\bibitem{dvurechensky2016stochastic}
P.~Dvurechensky and A.~Gasnikov.
\newblock Stochastic intermediate gradient method for convex problems with
  stochastic inexact oracle.
\newblock {\em Journal of Optimization Theory and Applications},
  171(1):121--145, 2016.

\bibitem{dvurechensky2018computational}
P.~Dvurechensky, A.~Gasnikov, and A.~Kroshnin.
\newblock Computational optimal transport: Complexity by accelerated gradient
  descent is better than by {S}inkhorn’s algorithm.
\newblock In {\em Proceedings of the 35th International Conference on Machine
  Learning}, volume~80 of {\em Proceedings of Machine Learning Research}, pages
  1367--1376, 2018.

\bibitem{dvurechensky2021first-order}
P.~Dvurechensky, S.~Shtern, and M.~Staudigl.
\newblock First-order methods for convex optimization.
\newblock {\em EURO Journal on Computational Optimization}, 9:100015, 2021.

\bibitem{florea2019exact}
M.~I. Florea.
\newblock Exact gradient methods with memory.
\newblock {\em Optimization Methods and Software}, pages 1--28, 2022.

\bibitem{gasnikov2017universal}
A.~Gasnikov.
\newblock Universal gradient descent.
\newblock {\em arXiv preprint arXiv:1711.00394}, 2017.

\bibitem{gasnikov2019near}
A.~Gasnikov, P.~Dvurechensky, E.~Gorbunov, E.~Vorontsova, D.~Selikhanovych,
  C.~A. Uribe, B.~Jiang, H.~Wang, S.~Zhang, S.~Bubeck, Q.~Jiang, Y.~T. Lee,
  Y.~Li, and A.~Sidford.
\newblock Near optimal methods for minimizing convex functions with lipschitz
  $p$-th derivatives.
\newblock In {\em Proceedings of the Thirty-Second Conference on Learning
  Theory}, volume~99 of {\em Proceedings of Machine Learning Research}, pages
  1392--1393, Phoenix, USA, 25--28 Jun 2019. PMLR.

\bibitem{gasnikov2018universal}
A.~V. Gasnikov and Y.~E. Nesterov.
\newblock Universal method for stochastic composite optimization problems.
\newblock {\em Computational Mathematics and Mathematical Physics},
  58(1):48--64, 2018.

\bibitem{hanzely2021accelerated}
F.~Hanzely, P.~Richt{\'a}rik, and L.~Xiao.
\newblock Accelerated bregman proximal gradient methods for relatively smooth
  convex optimization.
\newblock {\em Computational Optimization and Applications}, 79(2):405--440,
  Jun 2021.

\bibitem{hendrikx2020optimal}
H.~Hendrikx, F.~Bach, and L.~Massouli\'{e}.
\newblock An optimal algorithm for decentralized finite-sum optimization.
\newblock {\em SIAM Journal on Optimization}, 31(4):2753--2783, 2021.

\bibitem{pmlr-v119-hendrikx20a}
H.~Hendrikx, L.~Xiao, S.~Bubeck, F.~Bach, and L.~Massoulie.
\newblock Statistically preconditioned accelerated gradient method for
  distributed optimization.
\newblock In {\em Proceedings of the 37th International Conference on Machine
  Learning}, volume 119 of {\em Proceedings of Machine Learning Research},
  pages 4203--4227. PMLR, 13--18 Jul 2020.

\bibitem{huang2016strassen}
J.~Huang, T.~M. Smith, G.~M. Henry, and R.~A. van~de Geijn.
\newblock Strassen's algorithm reloaded.
\newblock In {\em SC'16: Proceedings of the International Conference for High
  Performance Computing, Networking, Storage and Analysis}, pages 690--701.
  IEEE, 2016.

\bibitem{kamzolov2020hyperfast}
D.~Kamzolov.
\newblock Near-optimal hyperfast second-order method for convex optimization.
\newblock In {\em Mathematical Optimization Theory and Operations Research},
  pages 167--178, Cham, 2020. Springer International Publishing.

\bibitem{kamzolov2020near}
D.~Kamzolov and A.~Gasnikov.
\newblock Near-optimal hyperfast second-order method for convex optimization
  and its sliding.
\newblock {\em arXiv preprint arXiv:2002.09050}, 2020.

\bibitem{kamzolov2020optimal}
D.~Kamzolov, A.~Gasnikov, and P.~Dvurechensky.
\newblock Optimal combination of tensor optimization methods.
\newblock In {\em Optimization and Applications}, pages 166--183, Cham, 2020.
  Springer International Publishing.

\bibitem{kingma2014adam}
D.~P. Kingma and J.~Ba.
\newblock Adam: A method for stochastic optimization.
\newblock {\em arXiv preprint arXiv:1412.6980}, 2014.

\bibitem{lanfirst}
G.~Lan.
\newblock {\em First-order and Stochastic Optimization Methods for Machine
  Learning}.
\newblock Springer, 2020.

\bibitem{lan17}
G.~Lan, S.~Lee, and Y.~Zhou.
\newblock Communication-efficient algorithms for decentralized and stochastic
  optimization.
\newblock {\em Mathematical Programming}, pages 1--48, 2018.

\bibitem{rcv1}
D.~D. Lewis, Y.~Yang, T.~G. Rose, and F.~Li.
\newblock Rcv1: A new benchmark collection for text categorization research.
\newblock {\em Journal of Machine Learning Research}, 5(Apr):361--397, 2004.

\bibitem{li2014scaling}
M.~Li, D.~G. Andersen, J.~W. Park, A.~J. Smola, A.~Ahmed, V.~Josifovski,
  J.~Long, E.~J. Shekita, and B.-Y. Su.
\newblock Scaling distributed machine learning with the parameter server.
\newblock In {\em 11th $\{$USENIX$\}$ Symposium on Operating Systems Design and
  Implementation ($\{$OSDI$\}$ 14)}, pages 583--598, 2014.

\bibitem{catalyst}
H.~Lin, J.~Mairal, and Z.~Harchaoui.
\newblock A universal catalyst for first-order optimization.
\newblock In {\em Proceedings of the 28th International Conference on Neural
  Information Processing Systems - Volume 2}, NIPS'15, page 3384–3392,
  Cambridge, MA, USA, 2015. MIT Press.

\bibitem{pmlr-v32-lin14}
Q.~Lin and L.~Xiao.
\newblock An adaptive accelerated proximal gradient method and its homotopy
  continuation for sparse optimization.
\newblock In {\em Proceedings of the 31st International Conference on Machine
  Learning}, volume~32 of {\em Proceedings of Machine Learning Research}, pages
  73--81, Bejing, China, 22--24 Jun 2014. PMLR.

\bibitem{lu2018relatively}
H.~Lu, R.~M. Freund, and Y.~Nesterov.
\newblock Relatively smooth convex optimization by first-order methods, and
  applications.
\newblock {\em SIAM Journal on Optimization}, 28(1):333--354, 2018.

\bibitem{monteiro2013accelerated}
R.~D. Monteiro and B.~F. Svaiter.
\newblock An accelerated hybrid proximal extragradient method for convex
  optimization and its implications to second-order methods.
\newblock {\em SIAM Journal on Optimization}, 23(2):1092--1125, 2013.

\bibitem{nesterov2005smooth}
Y.~Nesterov.
\newblock Smooth minimization of non-smooth functions.
\newblock {\em Mathematical Programming}, 103(1):127--152, 2005.

\bibitem{nesterov2013gradient}
Y.~Nesterov.
\newblock Gradient methods for minimizing composite functions.
\newblock {\em Mathematical Programming}, 140(1):125--161, 2013.
\newblock First appeared in 2007 as CORE discussion paper 2007/76.

\bibitem{Nesterov2018}
Y.~Nesterov.
\newblock {\em Lectures on Convex Optimization}.
\newblock Springer Optimization and Its Applications 137. Springer
  International Publishing, 2nd ed. edition, 2018.

\bibitem{nesterov2019implementable}
Y.~Nesterov.
\newblock Implementable tensor methods in unconstrained convex optimization.
\newblock {\em Mathematical Programming}, pages 1--27, 2019.

\bibitem{nesterov2020inexactB}
Y.~Nesterov.
\newblock Inexact basic tensor methods for some classes of convex optimization
  problems.
\newblock {\em Optimization Methods and Software}, pages 1--29, 2020.

\bibitem{nesterov2020inexact}
Y.~Nesterov.
\newblock Inexact high-order proximal-point methods with auxiliary search
  procedure.
\newblock {\em SIAM Journal on Optimization}, 31(4):2807--2828, 2021.

\bibitem{nesterov2020superfast}
Y.~Nesterov.
\newblock Superfast second-order methods for unconstrained convex optimization.
\newblock {\em Journal of Optimization Theory and Applications}, (1):1--30,
  2021.

\bibitem{nesterov2018lectures}
Y.~Nesterov et~al.
\newblock {\em Lectures on convex optimization}, volume 137.
\newblock Springer, 2018.

\bibitem{pytorch}
Pytorch.
\newblock 1.5.0, 2020.
\newblock \url{https://pytorch.org/}.

\bibitem{reddi2016aide}
S.~J. Reddi, J.~Kone{\v{c}}n{\`y}, P.~Richt{\'a}rik, B.~P{\'o}cz{\'o}s, and
  A.~Smola.
\newblock Aide: Fast and communication efficient distributed optimization.
\newblock {\em arXiv preprint arXiv:1608.06879}, 2016.

\bibitem{scaman2017optimal}
K.~Scaman, F.~Bach, S.~Bubeck, Y.~T. Lee, and L.~Massouli{\'e}.
\newblock Optimal algorithms for smooth and strongly convex distributed
  optimization in networks.
\newblock In {\em Proceedings of the 34th International Conference on Machine
  Learning}, volume~70 of {\em Proceedings of Machine Learning Research}, pages
  3027--3036. PMLR, 06--11 Aug 2017.

\bibitem{sdca}
S.~Shalev-Shwartz.
\newblock Sdca without duality, regularization, and individual convexity.
\newblock In {\em Proceedings of The 33rd International Conference on Machine
  Learning}, volume~48 of {\em Proceedings of Machine Learning Research}, pages
  747--754, New York, New York, USA, 20--22 Jun 2016. PMLR.

\bibitem{pmlr-v32-shamir14}
O.~Shamir, N.~Srebro, and T.~Zhang.
\newblock Communication-efficient distributed optimization using an approximate
  newton-type method.
\newblock In {\em Proceedings of the 31st International Conference on Machine
  Learning}, volume~32 of {\em Proceedings of Machine Learning Research}, pages
  1000--1008, Bejing, China, 22--24 Jun 2014. PMLR.

\bibitem{shamir2014communication}
O.~Shamir, N.~Srebro, and T.~Zhang.
\newblock Communication-efficient distributed optimization using an approximate
  newton-type method.
\newblock In {\em International conference on machine learning}, pages
  1000--1008, 2014.

\bibitem{stonyakin2021inexact}
F.~Stonyakin, A.~Tyurin, A.~Gasnikov, P.~Dvurechensky, A.~Agafonov,
  D.~Dvinskikh, M.~Alkousa, D.~Pasechnyuk, S.~Artamonov, and V.~Piskunova.
\newblock Inexact model: a framework for optimization and variational
  inequalities.
\newblock {\em Optimization Methods and Software}, 36(6):1155--1201, 2021.

\bibitem{sun2019distributed}
Y.~Sun, G.~Scutari, and A.~Daneshmand.
\newblock Distributed optimization based on gradient tracking revisited:
  Enhancing convergence rate via surrogation.
\newblock {\em SIAM Journal on Optimization}, 32(2):354--385, 2022.

\bibitem{wang2018giant}
S.~Wang, F.~Roosta, P.~Xu, and M.~W. Mahoney.
\newblock Giant: Globally improved approximate newton method for distributed
  optimization.
\newblock In {\em Advances in Neural Information Processing Systems}, pages
  2332--2342, 2018.

\bibitem{yang2013trading}
T.~Yang.
\newblock Trading computation for communication: Distributed stochastic dual
  coordinate ascent.
\newblock In {\em Advances in Neural Information Processing Systems}, pages
  629--637, 2013.

\bibitem{JMLR:v21:19-764}
X.-T. Yuan and P.~Li.
\newblock On convergence of distributed approximate newton methods:
  Globalization, sharper bounds and beyond.
\newblock {\em Journal of Machine Learning Research}, 21(206):1--51, 2020.

\bibitem{pmlr-v37-zhangb15}
Y.~Zhang and X.~Lin.
\newblock Disco: Distributed optimization for self-concordant empirical loss.
\newblock In {\em Proceedings of the 32nd International Conference on Machine
  Learning}, volume~37 of {\em Proceedings of Machine Learning Research}, pages
  362--370, Lille, France, 07--09 Jul 2015. PMLR.

\end{thebibliography}

\end{document}